\documentclass[11pt]{amsart}
\usepackage{amssymb}
\usepackage{enumerate}
\usepackage[colorlinks=true,linkcolor=blue,citecolor=blue,urlcolor=blue]{hyperref}

\newtheorem{mainthm}{Theorem}

\newtheorem{theorem}{Theorem}[section]
\newtheorem{lemma}[theorem]{Lemma}
\newtheorem{proposition}[theorem]{Proposition}
\newtheorem{corollary}[theorem]{Corollary}
\theoremstyle{definition}
\newtheorem{definition}[theorem]{Definition}
\newtheorem{example}[theorem]{Example}
\newtheorem{remark}[theorem]{Remark}
\newtheorem{question}[theorem]{Question}

\newcommand{\F}{\mathbb{F}}
\newcommand{\K}{\mathbb{K}}

\newcommand{\GL}{\mathrm{GL}}
\newcommand{\GaL}{\Gamma\mathrm{L}}
\newcommand{\SD}{\mathrm{SD}}
\newcommand{\Sym}{\mathrm{Sym}}
\newcommand{\Stab}{\mathrm{Stab}}
\newcommand{\End}{\mathrm{End}}
\newcommand{\Ind}{\mathrm{Ind}}
\newcommand{\sym}{\operatorname{sym}}
\newcommand{\cO}{\mathcal{O}}
\newcommand{\cA}{\mathcal{A}}
\newcommand{\xor}{\veebar}
\newcommand{\diag}{\operatorname{diag}}

\begin{document}

\title[$2$-groups with exactly one regular orbit]{Linear $2$-group actions with exactly one regular orbit}

\author{Thomas Michael Keller}
\address{Department of Mathematics, Texas State University, 601 University Drive, San Marcos, TX 78666, USA}
\email{keller@txstate.edu}

\author{Jeffrey M. Riedl}
\address{Department of Mathematics, University of Akron, Akron, OH, USA}
\email{riedl@uakron.edu}

\subjclass[2020]{Primary 20D15; Secondary 20C20, 20B35}
\keywords{Linear group action, regular orbit, finite $2$-group, wreath product, Fermat prime}

\begin{abstract}
Let $G$ be a finite $2$-group and $V$ a finite faithful irreducible $G$-module. We show that if $G$ has exactly one regular orbit on $V$, then the characteristic $q$ of $V$ is a Fermat prime, $V$ is irreducible over $\F_q$, $\dim_{\F_q}V$ is a power of $2$, and the centraliser of $G$ in $\GL(V)$ embeds into $G$. For every Fermat prime $q$ and every $m\ge1$ we determine the largest orbit size $2^{x_m}$ of a Sylow $2$-subgroup $P_m$ of $\GL_{2^{m-1}}(q)$ on its natural module $V_m$, and the number of orbits of this size, which is $1$, $2$ or $3$. Our main result is that for every Fermat prime $q$ and every $m\ge1$ there is an irreducible subgroup of $P_m$ of order $2^{x_m}$ that has exactly one regular orbit on $V_m$. In this respect $p=2$ behaves like $p=3$, and unlike the Mersenne primes $p\ge7$ studied in an earlier paper. The proof places a group related to a Singer cycle into every block of $q-1$ coordinates. This reduces the problem, for all Fermat primes at once, to a question about subgroups of a Sylow $2$-subgroup of a symmetric group acting on subsets, which we answer by means of fibre products. We also determine which largest orbits occur in two small levels, and we classify the examples in some small cases with the help of a computer.
\end{abstract}

\maketitle

\section{Introduction}

Orbit structures of finite linear groups, and in particular the existence of regular orbits, play an important role in the representation theory of finite groups; see, for instance, \cite{CY,MW}. In \cite{KR} we initiated the study of the extreme situation of exactly one regular orbit. There we considered pairs $(G,V)$, where $G$ is a finite $p$-group for an odd prime $p$ and $V$ is a finite faithful irreducible $G$-module such that $G$ has exactly one regular orbit on $V$. We showed that then $p$ is a Mersenne prime and that $(G,V)$ embeds into $(P_m,V_m)$, where $P_m$ is a Sylow $p$-subgroup of $\GL_1(p+1)\wr S_{p^{m-1}}$ and $V_m$ is its natural module, regarded over $\F_2$. We determined the largest orbit size $p^{x_m}$ of $P_m$ on $V_m$, and we showed that subgroups of $P_m$ of order $p^{x_m}$ with exactly one regular orbit exist for every $m$ if $p=3$, but only for $m\le3$ if $p\ge7$. In the present paper we treat the remaining prime $p=2$.

For $p=2$ the Mersenne primes are replaced by the Fermat primes, which now occur as the characteristic of $V$. To state our results, let $q$ be a Fermat prime. If $q\ge5$, we write $q=2^n+1$; for $m\ge1$ we let $P_m$ be a Sylow $2$-subgroup of $\GL_1(q)\wr S_{2^{m-1}}\le\GL_{2^{m-1}}(q)$, let $V_m=\F_q^{2^{m-1}}$ be its natural module, and put $k=m-1$. If $q=3$, the relevant field turns out to be $\F_9$, whose order is $2^3+1$. Accordingly, for $m\ge2$ we let $P_m$ be a Sylow $2$-subgroup of $\GaL_1(9)\wr S_{2^{m-2}}$ acting naturally on $V_m=\F_9^{2^{m-2}}$, which we regard as a vector space over $\F_3$, and we put $n=3$ and $k=m-2$; finally, $P_1=\GL_1(3)$ and $V_1=\F_3$. In all cases $\dim_{\F_q}V_m=2^{m-1}$, and $P_m$ is a Sylow $2$-subgroup of $\GL(V_m)$ (Lemma~\ref{lem:sylow}). We call $k$ the \emph{level}; it is the height of the binary tree underlying the permutation part of $P_m$.

\begin{mainthm}\label{thmA}
Let $G\ne1$ be a $2$-group, let $F$ be a finite field, and let $V$ be a faithful irreducible $FG$-module such that $G$ has exactly one regular orbit on $V$. Let $q$ be the characteristic of $F$, and let $\GL(V)$ denote the group of all $\F_q$-linear automorphisms of $V$. Then:
\begin{enumerate}[(a)]
\item $C_{\GL(V)}(G)$ is isomorphic to a subgroup of $G$;
\item $q$ is a Fermat prime, $V$ is irreducible as an $\F_qG$-module, and $\End_{\F_qG}(V)$ is $\F_q$ or, if $q=3$, possibly $\F_9$;
\item $\dim_{\F_q}V=2^{m-1}$ for some $m\ge1$, and $(G,V)$ is isomorphic to $(T,V_m)$ for some $T\le P_m$;
\item if $q\ge5$, then $V$ is induced from a one-dimensional module of a subgroup of $G$; if $q=3$, then $V\cong\Ind_H^G(V_1)$ for a subgroup $H$ and a module $V_1$ on which $H/C_H(V_1)$ acts primitively and is isomorphic to $C_2$ (with $\dim V_1=1$) or to $C_8$, $Q_8$ or $\SD_{16}$ (with $\dim V_1=2$).
\end{enumerate}
\end{mainthm}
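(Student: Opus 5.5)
The key device is a counting argument with the centraliser. Put $C=C_{\GL(V)}(G)$. Then $C$ permutes the $G$-orbits on $V$, and it maps regular orbits to regular orbits. By hypothesis there is exactly one regular orbit $\cO$, so $C$ stabilises $\cO$. Fix $v\in\cO$. For each $c\in C$ there is a unique $g_c\in G$ with $vc=vg_c$. Since $c$ commutes with $G$, the map $c\mapsto g_c$ is an anti-homomorphism $C\to G$. It is injective: if $g_c=1$, then $c$ fixes $v$, hence fixes every $vg$, and these span $V$ because $V$ is irreducible. This proves (a).

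Part (b) then follows from Schur's lemma. By Schur, $E=\End_{FG}(V)$ is a finite field. The group $E^\times$ lies in $C$, which is a $2$-group by (a), so $|E|-1$ is a power of $2$; the same holds for $\F_q^\times\le C$. Now $q-1=2^a$ with $q$ prime forces $q$ to be a Fermat prime (or $q=2$). We exclude $q=2$ because a nontrivial $2$-group acting on a module in characteristic $2$ has a nonzero fixed point, contradicting faithful irreducibility. Next, consider $V$ as an $\F_qG$-module. If it were reducible, then, since $|G|$ is coprime to $q$, Maschke's theorem gives a decomposition, and $C$ would contain $\F_q^\times\times\F_q^\times$ acting on two summands. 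I would then derive a contradiction: either the decomposition yields more than one regular orbit, or it yields a centraliser too large to embed in $G$ compatibly. The cleanest route is probably to note that $F$ itself embeds in $E$, and that an $F$-irreducible module which is $\F_q$-reducible has $\End_{\F_qG}(V)\supsetneq$ a field; I would rather argue directly that $\End_{\F_qG}(V)$ is a field $\F_{q^f}$ with $q^f-1$ a $2$-power. Since $q^f-1$ is a power of $2$ only when $f=1$, or when $f=2$ and $q=3$ (Catalan/Zsigmondy; $3^2-1=8$), we get $\End_{\F_qG}(V)=\F_q$ or $\F_9$. Then $F\le\End_{\F_qG}(V)$, and $F$-irreducibility together with these small endomorphism rings gives $\F_q$-irreducibility.

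For (c), I would use that $P_m$ is Sylow in $\GL(V_m)$ (Lemma~\ref{lem:sylow}). Thus $G$, being a $2$-subgroup of $\GL(V)$, is conjugate into a Sylow $2$-subgroup of $\GL(V)\cong\GL_d(q)$. It remains to show that $d$ is a power of $2$. I would do this via (d), proved by induction using Clifford theory. If $V$ is imprimitive, then $V=\Ind_H^G W$ with $H$ of index $2$; otherwise $V$ is primitive. Primitive $2$-groups in $\GL_d(q)$ have every characteristic abelian subgroup cyclic, so $G$ is cyclic, or of type $Q_8$, dihedral/semidihedral/generalised quaternion extended by a cyclic group. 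In each primitive case I would check the centraliser condition (a) and count regular orbits directly. For $q\ge5$, this shows that only $d=1$ survives. For $q=3$, the survivors are $d=1$ with $G=C_2$, and $d=2$ with $G\in\{C_8,Q_8,\SD_{16}\}$ (inside $\GL_2(3)$, whose Sylow $2$-subgroup is $\SD_{16}$). Here one checks that, for instance, $D_8$ has no regular orbit or several.

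The induction step is the main obstacle. We need that if $G$ has exactly one regular orbit on $\Ind_H^G W$, then the stabiliser data on $W$ inherits a suitable "unique regular orbit" property, so that the primitive pieces are restricted to the list above. I would attack it by choosing a regular vector $v=(w_1,\dots,w_t)$ and noting that $N=\bigcap H^g$ acts componentwise. The uniqueness, together with (a) applied to $N$'s centraliser on each block, should force each block constituent to be of the listed primitive type. Given that, iterated induction by index-$2$ steps yields $d=2^{m-1}$, which completes (c).
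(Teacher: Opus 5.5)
Your proof of (a) is the paper's argument, apart from one hole noted below. The real gap is the $\F_q$-irreducibility in (b).

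Your plan to show directly that $\End_{\F_qG}(V)$ is a field $\F_{q^f}$ is circular. That ring is a field only once $V$ is known to be $\F_qG$-irreducible, which is what you are trying to prove. The observation $\F_q^\times\times\F_q^\times\le C$ gives no evident contradiction either, since it is a $2$-group. Your Schur argument with $E=\End_{FG}(V)\supseteq F$ does correctly give $F=\F_q$, or $F=\F_9$ with $q=3$. In the latter case, however, nothing you say excludes $V|_{\F_3G}\cong W\oplus W$.

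The missing idea is homogeneity plus an element of order $q$ in $C$. Let $W$ be an irreducible $\F_qG$-submodule. Then $\sum_{\lambda\in F^\times}\lambda W$ is an $FG$-submodule, hence equals $V$, and multiplication by $\lambda$ is a $G$-isomorphism $W\to\lambda W$. So $V=W_1\oplus\dots\oplus W_e$ with all $W_i\cong W$. If $e\ge2$, an isomorphism $\theta\colon W_1\to W_2$ gives the map $w_1+w_2+\dots+w_e\mapsto w_1+(w_2+\theta(w_1))+w_3+\dots+w_e$. This lies in $C$ and has order $q$, contradicting (a). Only after this is $\End_{\F_qG}(V)$ a field; its unit group lies in $C$, so it is $\F_q$ or $\F_9$.

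The same $F$-versus-$\F_q$ issue leaves a small hole in (a). Since $c$ is only $\F_q$-linear, you need the $\F_q$-span of $vG$ to be $V$. $F$-irreducibility alone does not give this: take $G=\{\pm1\}$ acting by scalars on $V=\F_9$. Instead, use that $\cO=vG$ is stable under $F^\times\le C$, because $C$ preserves the unique regular orbit. Then the $\F_q$-span of $\cO$ is an $FG$-submodule.

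For (c) and (d), the mechanism you propose for the induction step is false: the primitive constituent need not inherit a unique regular orbit. In Example~\ref{ex:wreath}, $S\wr C_2$ with $S=(\F_q^\times)^2$ has exactly one regular orbit on $\F_q^2$, while its primitive constituent $S$ on $\F_q$ has two. So counting regular orbits of primitive pieces is the wrong test. In fact (d) needs nothing beyond (b).

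Take a decomposition $V=V_1\oplus\dots\oplus V_r$ permuted transitively by $G$ with $\dim V_1$ minimal; no iteration is needed. Then $\bar H=H/C_H(V_1)$ is primitive on $V_1$. By Clifford, every \emph{normal} (not merely characteristic) abelian subgroup of $\bar H$ is cyclic, so $\bar H$ is cyclic, generalised quaternion, dihedral or semidihedral.
\begin{itemize}
\item If $\bar H$ is not cyclic, let $\langle c\rangle$ have index $2$ and $c^t=c^\varepsilon$. Homogeneity of $V_1|_{\langle c\rangle}$ forces $\varepsilon\equiv q^i\pmod{|c|}$. This is impossible for $q\equiv1\pmod4$, and for $q=3$ it leaves only $Q_8$ and $\SD_{16}$, both of dimension $2$.
\item If $\bar H=\langle c\rangle$ has order $2^b$ and $c$ acts as $\lambda$ on $V_1\cong\F_q(\lambda)$, primitivity forces $\F_q(\lambda)=\F_q(\lambda^2)$. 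Hence $b\le n$ and $\dim V_1=1$ for $q\ge5$, and $b\in\{1,3\}$ for $q=3$.
\end{itemize}

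Finally, (c) does not require (d). If $M$ has index $2$ in $G$ and $W$ is an irreducible $\F_qM$-submodule of $V$, then $V=W+Wg$ for $g\in G\setminus M$, so $\dim V\in\{\dim W,2\dim W\}$. Induction on $|G|$ then shows that $\dim V$ is a power of $2$, and Sylow's theorem places $G$ in $P_m$.
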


In contrast to the case of odd $p$ \cite[proof of Theorem~4.2]{KR}, the primitive constituent of $V$ need not have exactly one regular orbit (Example~\ref{ex:wreath}). The reason is that, unlike permutation groups of odd order \cite{Gl}, permutation $2$-groups need not have regular orbits on the power set of the set they act on; the Sylow $2$-subgroup of $S_4$ is an example.

\begin{mainthm}\label{thmB}
Let $q$ be a Fermat prime and $m\ge1$, where $m\ge2$ if $q=3$, and let $n$ and $k$ be as above. Then the largest orbit size of $P_m$ on $V_m$ is $2^{x_m}$, where
\[
x_m=\begin{cases} n\,2^k, & k\le n,\\ n\,2^{n+1}+1, & k=n+1,\\ n\,2^k+3\cdot2^{k-n-2}-1, & k\ge n+2,\end{cases}
\]
and the number of orbits of this size is $1$ if $k<n$, $2$ if $k=n$, $1$ if $k=n+1$, and $3$ if $k\ge n+2$. In particular $x_{m+1}=2x_m+1$ whenever $k\ge n+2$.
\end{mainthm}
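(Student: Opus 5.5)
Write $B=\GL_1(q)$ (order $2^n$) for $q\ge5$, and $B=\GaL_1(9)$'s Sylow $2$-subgroup $\SD_{16}$ acting on $\F_9$ for $q=3$. In both cases $P_m=B\wr Q_k$, where $Q_k$ is the Sylow $2$-subgroup of $S_{2^k}$, i.e. the iterated wreath product of $k$ copies of $C_2$. The plan is to reduce the orbit count on $V_m=W^{2^k}$, with $W=\F_q$ or $\F_9$, to a purely combinatorial question about $Q_k$.

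\textbf{Reduction.} For a vector $v=(w_1,\dots,w_{2^k})$, the base group $B^{2^k}$ acts coordinatewise. On $W$, $B$ has orbits $\{0\}$ and $W\setminus\{0\}$, and on $W\setminus\{0\}$ it acts regularly (for $q\ge5$ trivially; for $q=3$, $\SD_{16}$ has order $16>8$, so the stabiliser of a nonzero point is $C_2$). Thus the base-orbit of $v$ has size $|B|^{s}\cdot c$, where $s$ is the size of the support $S\subseteq\{1,\dots,2^k\}$ and $c$ is a correction from the stabilisers. Modulo the base group, the orbit of $v$ corresponds to the $Q_k$-orbit of the support $S$ (for $q=3$ this holds after choosing a normalised representative; the point stabilisers $C_2$ contribute a factor $2^{-s}$). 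Hence
\[
|v^{P_m}|=2^{ns}\cdot|S^{Q_k}|\quad\text{with } n=\log_2|B|\text{ up to the } q=3 \text{ adjustment}.
\]
With $n=3$ for $q=3$, the formula $2^{4s}\cdot 2^{-s}=2^{3s}$ is uniform. The largest orbit is therefore the maximum of $ns+\log_2|S^{Q_k}|$ over subsets $S$, and the number of such orbits is the number of $Q_k$-orbits of subsets attaining this maximum.

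\textbf{Combinatorial step.} Define $f_k(s)=\max_{|S|=s}\log_2|S^{Q_k}|$. If $s=2^k$ (full support), the orbit size is $2^{n2^k}$. Removing $j$ points costs $nj$ in the exponent but can gain at most $\log_2|Q_k|=2^k-1$. Using the recursive structure $Q_k=Q_{k-1}\wr C_2$, a subset $S$ decomposes as $(S_1,S_2)$ on the two halves. The orbit size is $|S_1^{Q}|\,|S_2^{Q}|$, doubled if $S_1,S_2$ lie in different $Q_{k-1}$-orbits, and otherwise the size is $|S_1^Q|^2$. I would define $g_k(j)$ as the maximum log-orbit size for subsets missing $j$ points and prove a recursion for it. Small complements only give orbit size at most roughly $2^{k}$ for $j=1$ (the point stabiliser has index $2^k$), etc. The optimum balances $nj$ against the gain $\log_2|S^{Q_k}|$. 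For $k\le n$ the gain from deleting points never exceeds the cost $nj$, strictly for $k<n$, which gives uniqueness. At $k=n$ there is a tie between $j=0$ and $j=1$, giving two orbits. For $k\ge n+1$ one deletes suitable patterns: roughly one point from each subtree of height $n+1$, arranged so as to have trivial stabiliser where possible. This yields the recursion $x_{m+1}=2x_m+1$ from pairing two nonisomorphic optimal configurations on the halves (the $+1$ is the swap). Starting from $x$ at $k=n+1$ and $k=n+2$, the closed form $n2^k+3\cdot2^{k-n-2}-1$ follows by induction.

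\textbf{Counting and main obstacle.} The count of $3$ for $k\ge n+2$ should come from the recursion: maximal configurations at level $k+1$ are unordered pairs of distinct maximal orbits at level $k$. This requires an auxiliary induction on the set of maximal and near-maximal configurations, since pairs with one near-optimal half may also tie; I would track the top two exponent values. The main obstacle is the combinatorial optimisation proving that no other subset pattern beats the proposed ones, especially at the transition levels $k=n,n+1,n+2$. There I would first compute the base cases exhaustively (small $Q_k$) and then prove the inductive step via the inequality $\log_2|S^{Q_k}|\le \log_2|S_1^{Q}|+\log_2|S_2^{Q}|+1$ together with the induction hypotheses on both halves.
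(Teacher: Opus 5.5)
Your reduction is correct and is the same as the paper's. The orbit of $v$ consists of all vectors whose support lies in the $W_k$-orbit of $S$ (your $Q_k$ is the paper's $W_k$). Hence $|v^{P_m}|=2^{n|S|}\,|S^{W_k}|$, which is the formula $|P|\,2^{-c(Z)}$ with $c(Z)=n|Z|+\sym(Z)$ for the zero set $Z$ (Lemma~\ref{lem:orbits}). Your rule that the exponents of the two halves add, plus $1$ exactly when the halves are non-conjugate, is Lemma~\ref{lem:costrec}. Once level $n+2$ is known to have exactly three optimal classes, with everything else at least one below, your step $k\to k+1$ for $k\ge n+2$ is Lemma~\ref{lem:costhigh}(c).

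The gap is everything you defer to ``computing the base cases exhaustively''. The answer is decided at the levels $k\le n+2$, and these grow with $n$: they reach $k=18$ for $q=65537$, and the theorem is claimed for every Fermat prime. So no finite computation settles them. You need an argument uniform in $n$, and the proposal contains none.

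Your claim that for $k\le n$ deleting $j$ points gains less than $nj$ is only asserted, and in that form it is not enough. Write $e(Z)$ for the orbit exponent with zero set $Z$. You need a margin of $2$ at level $n$: $e(\emptyset)=e(\{0\})=n2^n$ and $e(Z)\le n2^n-2$ for $|Z|\ge2$. This margin is what makes the one-zero class the unique optimum at level $n+1$, with exactly the two runners-up $\emptyset$ and $\{0,1\}$ one below and everything else at least two below. With a margin of only $1$, pairs (optimal, non-optimal) at level $n$ would also be runners-up, and the count at level $n+2$ would change.

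Nor can this margin be obtained by an induction that tracks only the top two values. For halves $(\emptyset,Z')$ with $|Z'|\ge2$, knowing only the second-best value at level $k-1$ does not even exclude a tie with the one-zero class at level $k$. The paper uses the three-tier invariant $c(\emptyset)=2^k-1$, $c(\{0\})=2^k-1+n-k$, and $c(X)\ge2^k+1+n-k$ for $|X|\ge2$ (Lemma~\ref{lem:costlow}). In your language there is also a direct route: for $j=|Z|\ge2$ we have $|Z^{W_k}|\le\binom{2^k}{j}<2^{kj-1}$. Since orbit lengths are powers of $2$, this gives $e(Z)\le n2^k-j(n-k)-2$.

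After that, levels $n+1$ and $n+2$ need a short explicit case analysis (Lemma~\ref{lem:costhigh}(a),(b)). That analysis should replace your heuristic ``one zero in each subtree of height $n+1$''. From level $n+3$ on, none of the optimal sets has that shape; for example $Y_3^0=\{0,1,3,7\}$ meets the residue classes modulo $4$ in $1,1,0,2$ points.
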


The largest orbits are described explicitly in Theorem~\ref{thm:largest}. With respect to the monomial basis of $V_m$ (over $\F_9$ if $q=3$), the unique largest orbit consists of the vectors without zero coordinates if $k<n$, and of the vectors with exactly one zero coordinate if $k=n+1$; for $k=n$ both of these orbits are largest orbits. For $k\ge n+2$ the three largest orbits $\cO^-$, $\cO^0$ and $\cO^+$ consist of vectors with exactly $2^{k-n-1}-1$, $2^{k-n-1}$ and $2^{k-n-1}+1$ zero coordinates, respectively, arranged in the patterns of Definition~\ref{def:Y}.

\begin{mainthm}\label{thmC}
Let $q$ be a Fermat prime and $m\ge1$. Then $P_m$ has an irreducible subgroup $T$ of order $2^{x_m}$ that has exactly one regular orbit on $V_m$. If $k\ge n+3$, then $T$ can be chosen such that this orbit is $\cO^0$.
\end{mainthm}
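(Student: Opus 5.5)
The plan follows the abstract's strategy: reduce, uniformly in $q$, to a combinatorial problem about a Sylow $2$-subgroup $W_k$ of $S_{2^k}$ acting on subsets, and then solve that problem with fibre products. Write $V_m=\F^{2^k}$, where $\F=\F_q$ if $q\ge5$ and $\F=\F_9$ if $q=3$. In every case $|\F^\times|=2^n$, and $P_m=B\rtimes W_k$, where the base group $B$ is $(\F^\times)^{2^k}$ for $q\ge5$ and $(\GaL_1(9)_2)^{2^k}$ for $q=3$.

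First I will build the base part. I group the $2^n$ nonzero values of a coordinate into blocks, which I think of as "$q-1$ coordinates" in the sense of the abstract. Inside the base group I take a subgroup $D$ related to a Singer cycle: the cyclic group $\F^\times$ of order $2^n$ acts regularly on $\F^\times$, and its intersection with a fixed block gives a regular action there. The point of this choice is that the stabiliser in $B\rtimes W$ of a vector $v$ can then be read off from its zero pattern $Z(v)\subseteq\{1,\dots,2^k\}$ together with a cocycle-type condition on the nonzero entries. I will set $T=D\rtimes R$, where $R\le W_k$ is chosen as follows. The orbits of $D$ on the vectors with support complement $Z$ are regular exactly when the coordinates outside $Z$ carry a full factor $\F^\times$. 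Under this choice, a vector $v$ has trivial stabiliser in $T$ if and only if $Z(v)$ lies in a regular $R$-orbit on subsets, subject to a compatibility condition with $D$. Counting orders, $|T|=2^{x_m}$ should translate into $|R|$ equal to $2^{x_m-n2^k}$ times a correction coming from the zero coordinates. Using the formula in Theorem~\ref{thmB}, this gives $|R|=1$ for $k\le n$, and for $k\ge n+2$ it gives $|R|=2^{3\cdot 2^{k-n-2}-1}$ after compensating for the zero coordinates of $\cO^0$.

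This reduces everything to the following combinatorial problem. For each $k$, I must find $R\le W_k$ of the prescribed order, acting on the family of admissible zero patterns of size $2^{k-n-1}$ (the patterns of Definition~\ref{def:Y}), with exactly one regular orbit, and with $T$ irreducible. Irreducibility should follow once $R$ is transitive on the $2^k$ coordinates, together with the fact that $D$ separates coordinates so that the weights are distinct. The small cases $k\le n+2$ I will handle directly. For $k\le n$ I take $R$ transitive of small order together with the full diagonal, and check that the vectors with no zero coordinate form a single regular orbit. The boundary cases $k=n,n+1,n+2$, where Theorem~\ref{thmB} has two or one largest orbits, need individual checks; there I use the explicit orbits described in Theorem~\ref{thm:largest} and choose $R$ so as to kill the regularity of the competing largest orbits.

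The inductive step for $k\ge n+3$ uses fibre products, mirroring the recursion $x_{m+1}=2x_m+1$. Given $R_k\le W_k$ that works at level $k$, together with a homomorphism $\varphi\colon R_k\to C_2$ that detects how a subset splits, I take
\[
R_{k+1}=\{(a,b)\sigma^\epsilon : a,b\in R_k,\ \varphi(a)=\varphi(b)\}\rtimes\langle\sigma\rangle ,
\]
which is the fibre product over $C_2$, extended by the swap $\sigma$. Its order has the right shape. I then check two things. First, $R_{k+1}$ has exactly one regular orbit on patterns, namely that of $\cO^0$: a pattern splits as $(Z_1,Z_2)$ across the two halves, and regularity forces both halves to be regular at level $k$, with the swap and the fibre condition eliminating the ambiguity. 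Second, the extra parity information, together with the base-group factor, removes the regular orbits on $\cO^\pm$.

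I expect the main obstacle to be this inductive step. The regular orbits on $\cO^-$ and $\cO^+$ must be destroyed without creating a second regular orbit inside $\cO^0$. This requires choosing $\varphi$ carefully, presumably as a sign-type character on the top level of the tree, and carrying a stronger inductive hypothesis: exactly one regular orbit, plus control of the stabilisers of order $2$ on neighbouring pattern sizes. I would first try to formulate this strengthened statement at the base level $k=n+3$, verify it there, possibly by computer, and then prove that the fibre-product construction preserves it.
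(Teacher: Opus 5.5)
There is a genuine gap, and it sits where the paper's key idea is. You build $T=D\rtimes R$ with $D$ inside the diagonal (base) group and $R\le W_k$ a group of permutation matrices. You also read the abstract's ``blocks of $q-1$ coordinates'' as groupings of the $2^n$ nonzero field values. In fact they are sets of $2^n$ coordinate \emph{positions}.

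A split group of your form cannot do the job.
\begin{enumerate}[(1)]
\item For $1\le k<n$ the only largest orbit is $\cO_\emptyset$, so it must be the regular orbit. It contains $(1,\dots,1)$, which every permutation matrix fixes, so regularity forces $R=1$. Then $T$ lies in the base group, preserves every coordinate line, and is reducible. Your alternative, ``$R$ transitive together with the full diagonal'', has order larger than $2^{x_k}=|H|$ and no regular orbit at all.
\item The paper's $T=\langle N_k,\zeta_0g_k\rangle$ is non-split. Its permutation image $\langle g_k\rangle$ is transitive, yet it contains no nontrivial permutation matrix: $(\zeta_0g_k)^i$ has determinant $\zeta^i$, while elements of $N_k$ have determinant in $\langle\zeta^{2^k}\rangle$.
\item The same defect appears for $k\ge n+3$. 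For $q\ge5$, $|D|\,|R|=2^{x_k}$ with $|R|=2^{3\cdot2^{k-n-2}-1}$ forces $D=H$. Since $H$ contains $A^{Y}$, no orbit with a zero coordinate is regular. Also, a subgroup of $W_k$ of that order is not transitive on $\Omega_k$ when $k=n+3$ and $n\ge3$, since $32<2^k$.
\item That order is correct only for a group $R\le W_{k-n}$ permuting the $2^{k-n}$ blocks $B_\beta$, with $\Sigma=\langle M_n,\zeta_0g_n\rangle$ placed in every block. By Lemma~\ref{lem:Sigma}, $\Sigma$ has order $|A|^{2^n}$, is transitive on its block, and is regular on both $\cO_\emptyset$ and $\cO_1$ of level $n$. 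This is what reduces $T(R)\cap C(v_{Y_j^\delta})$ to $\Stab_R(Y_j^\delta)$.
\end{enumerate}
Split groups do suffice for $n\le k\le n+2$, where the paper uses $M_k\rtimes\langle g_k\rangle$.

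Your inductive step also fails, for a counting reason.
\begin{enumerate}[(1)]
\item For nontrivial $\varphi$, the group $\{(a,b):\varphi(a)=\varphi(b)\}$ extended by the swap has order $|R_j|^2$. The next level requires $2|R_j|^2$, since $2(3\cdot2^{j-2}-1)+1=3\cdot2^{j-1}-1$.
\item For trivial $\varphi$ you get $R_j\wr C_2$. There the stabiliser of $(Y_j^\gamma,Y_j^\delta)$ is $\Stab(Y_j^\gamma)^L\Stab(Y_j^\delta)^R$ (Lemma~\ref{lem:halves}). Each $Y^\epsilon_{j+1}$ has halves of two \emph{different} types, so a group with exactly one regular pattern at level $j$ has none at level $j+1$.
\end{enumerate}
So ``exactly one regular orbit'' cannot be the inductive invariant. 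The paper instead carries \emph{admissible} groups $U$ of twice the target order, with all three stabilisers of order $2$ and two characters $f_1,f_2$ (Definition~\ref{def:admissible}).
\begin{enumerate}[(1)]
\item The fibre product over $f_1$ is again admissible.
\item The fibre product over $(f_1,f_2)$ has the target order. Because $f_2$ separates $c_0$ from $c_\pm$, it is regular exactly on $Y^-_{j+1}$ and $Y^+_{j+1}$.
\item Wreathing this with $C_2$ gives a group regular exactly on $Y^0_{j+2}=(Y^-_{j+1},Y^+_{j+1})$.
\end{enumerate}
The induction starts from $U_2=W_2$, and $R_3=E_2\wr C_2$ handles $j=3$.
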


Together with \cite{KR}, Theorem~\ref{thmC} shows that subgroups of maximal order with exactly one regular orbit exist in every dimension for the primes $2$ and $3$, while for the Mersenne primes $p\ge7$ they exist only for $m\le3$ \cite[Theorem~E]{KR}.

\begin{mainthm}\label{thmD}
\begin{enumerate}[(a)]
\item Let $q\ge5$. The subgroups of $P_2=\GL_1(q)\wr C_2$ of order $(q-1)^2$ with exactly one regular orbit on $V_2$ are the base group and the group $M_1$ of Proposition~\ref{prop:m2}; only $M_1$ is irreducible. For $q=3$ these subgroups are $C_8$ and $Q_8$, and both are irreducible.
\item For $(q,m)=(3,2)$, $(5,2)$, $(17,2)$ and $(3,3)$ there are exactly $3$, $3$, $5$ and $34$ conjugacy classes of $2$-subgroups of $\GL(V_m)$ with exactly one regular orbit on $V_m$, of which $2$, $2$, $4$ and $10$ are irreducible (Table~\ref{tab:small}). For $(q,m)=(5,3)$ there are exactly $30$ conjugacy classes of subgroups of $\GL(V_3)$ of order $2^{x_3}=2^8$ with exactly one regular orbit, of which $16$ are irreducible; the corresponding $36$ conjugacy classes of subgroups of $P_3$ split evenly between the two largest orbits.
\end{enumerate}
\end{mainthm}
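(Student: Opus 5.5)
Theorem~\ref{thmD} has two parts of quite different nature, and I would treat them separately.

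For part (a), with $q\ge5$, I would work in $P_2=\GL_1(q)\wr C_2$, where $\GL_1(q)\cong C_{2^n}$ and the base group is $B=C_{2^n}\times C_{2^n}$ of order $(q-1)^2$. By Theorem~\ref{thmB} with $k=1\le n$, the largest orbit of $P_2$ is the set $\cO$ of vectors with no zero coordinate. This set has size $(q-1)^2$, so any subgroup $T$ of order $(q-1)^2$ with a regular orbit must act regularly on $\cO$. Hence every orbit meeting $\cO$ is regular, and $T$ has exactly one regular orbit precisely when $T$ is transitive on $\cO$ and has no regular orbit on the vectors with a zero coordinate. Those vectors form $\{(a,0),(0,a)\}$ together with $0$, only $2(q-1)$ nonzero vectors, which is fewer than $(q-1)^2$. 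So every subgroup of order $(q-1)^2$ already has at most one regular orbit, and the task is to classify the $T$ of index $2$ in $P_2$ that act transitively on $\cO$.

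Index-$2$ subgroups correspond to the nontrivial homomorphisms $P_2\to C_2$. The abelianisation of $P_2$ is $C_{2^n}\times C_2$, generated by the image of $(\lambda,1)$ and by the swap $\sigma$. This gives three index-$2$ subgroups: the base group $B$, the group $\langle (\lambda,\lambda^{-1}),(\lambda,1)\sigma\rangle$ of elements whose ``product of diagonal entries'' is a square combined with the swap in the right way, and one further subgroup. For each of these I would test transitivity on $\cO$, that is, whether the stabiliser of $(1,1)$ inside $T$ is trivial. The base group is clearly regular on $\cO$. For the other two, the stabiliser of $(1,1)$ in $P_2$ is $\langle\sigma\rangle$, so it suffices to check which of them contain $\sigma$. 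The one avoiding $\sigma$ and $B$ should be $M_1$ from Proposition~\ref{prop:m2}. Irreducibility then follows because $B$ is reducible and $M_1$ contains an element interchanging the coordinate lines, with the two diagonal characters distinct, so that $M_1$ has no invariant line.

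For $q=3$ the group is $P_2=\SD_{16}$ acting on $\F_9$. The subgroups of order $8$ are $C_8$, $Q_8$ and $D_8$. Here $D_8$ contains a reflection fixing a line and fails to be regular on $\F_9^\times$, while $C_8$ and $Q_8$ are regular on $\F_9^\times$. Both are irreducible, since they act fixed-point-freely on the $2$-dimensional space.

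For part (b) I would rely on the computer. For each $(q,m)$, I would enumerate conjugacy classes of subgroups of a Sylow $2$-subgroup of $\GL(V_m)$ up to $\GL(V_m)$-conjugacy, using Lemma~\ref{lem:sylow} and the fact that every $2$-subgroup is conjugate into $P_m$. For each class I would compute the orbits and count the regular ones, and then test irreducibility using the MeatAxe. Theorem~\ref{thmA}(a), which requires $|C_{\GL(V)}(G)|\le|G|$, gives a useful prefilter, and so does the order bound $|T|\le|V|$. For $(5,3)$ I would restrict the search to subgroups of order $2^8$, whose existence is guaranteed by Theorem~\ref{thmB} giving $x_3=8$. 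I would then check which of the two largest orbits is the regular one, and match the $P_3$-classes to the $\GL$-classes.

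The main obstacle is making this enumeration feasible and complete in the $(3,3)$ case, where $|P_3|$ is $2^{7}$ and there are many small subgroups, and in the $(5,3)$ case, where $|P_3|=2^{13}$. For these I would use the subgroup lattice of $P_m$ computed in GAP or Magma, followed by fusion under $N_{\GL(V_m)}$ of the relevant subgroups.
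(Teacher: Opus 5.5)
Your proposal is correct and follows the paper's route. In (a) you reduce to the index-$2$ subgroups of $P_2$ avoiding $\sigma$, the generator of $C_{P_2}((1,1))$; your abelianisation count is equivalent to the paper's $P_2/\Phi(P_2)\cong C_2\times C_2$, and your $\langle(\lambda,\lambda^{-1}),(\lambda,1)\sigma\rangle$ is exactly $M_1=N\langle\diag(\zeta,1)\sigma\rangle$. For $q=3$ you discard the $D_8$ containing $\varphi$, and (b) is the same GAP enumeration with MeatAxe tests and $\GL(V_m)$-fusion. The only slips are in your feasibility remarks: $|P_3|$ is $2^9$ for $(q,m)=(3,3)$ and $2^{11}$ for $(5,3)$, not $2^7$ and $2^{13}$.
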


Part~(b) was obtained with the help of the computer algebra system GAP \cite{GAP}; see Section~\ref{sec:small}. As for $p=3$ \cite[Theorem~D]{KR}, a subgroup of order $2^{x_m}$ with exactly one regular orbit is in general not determined by this orbit (Remark~\ref{rem:nonunique}), and the even split in~(b) parallels the even splits found in \cite[Theorems~4.19 and~4.21]{KR}.

\begin{mainthm}\label{thmE}
Let $q$ be a Fermat prime and let $m$ be such that $k\in\{n+2,n+3\}$.
\begin{enumerate}[(a)]
\item If $k=n+2$, then each of $\cO^-$, $\cO^0$ and $\cO^+$ is the unique regular orbit of a subgroup of $P_m$ of order $2^{x_m}$; for $\cO^-$ and $\cO^+$ this subgroup can be chosen to be irreducible.
\item If $k=n+3$, then each of $\cO^-$ and $\cO^0$ is the unique regular orbit of an irreducible subgroup of $P_m$ of order $2^{x_m}$.
\end{enumerate}
\end{mainthm}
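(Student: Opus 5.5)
We would prove Theorem~\ref{thmE} with the same reduction that gives Theorem~\ref{thmC}. In every block of $q-1$ (or $8$, if $q=3$) coordinates we place the Singer-type group. The problem then becomes one about a subgroup $U$ of a Sylow $2$-subgroup $W_k$ of $S_{2^k}$ acting on subsets of the $2^k$ leaves, namely the zero patterns. Write $T=T(U,\dots)$ for the subgroup of $P_m$ built from $U$ and the diagonal data on the blocks. Then $|T|=2^{x_m}$ exactly when $U$ has the right order, and $T$ has exactly one regular orbit exactly when the following holds. The zero-pattern set $Z$ of the target orbit has trivial stabiliser in $U$ (after accounting for the coordinate group), and every other pattern has stabiliser large enough to force non-regularity. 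This also requires the counting used for Theorem~\ref{thmB}: the number of nonzero entries times $n$, plus $\log_2$ of the stabiliser index, must fall short of $x_m$. Irreducibility of $T$ should follow from the transitivity of $U$ on the leaves together with the block construction, as in the proof of Theorem~\ref{thmC}.

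So for each target orbit we construct a concrete $U\le W_k$ by fibre products of the subgroups already used at smaller levels. At level $k=n+2$ the patterns have $2^{k-n-1}=2$ zeros, give or take one. We realise $W_k$ as $W_{k-1}\wr C_2$, and we take $U$ to be the fibre product of two copies of a level-$(k-1)$ subgroup over a suitable quotient, extended by the swap. For $\cO^-$ (one zero) and $\cO^+$ (three zeros) we choose the quotient so that the unique regular pattern has its zeros split asymmetrically between the two halves. The swap then moves it, so the group stays transitive and hence irreducible. For $\cO^0$ (two zeros) we only need existence, and we may use a reducible (intransitive) $U$, for instance the fibre product without the swap. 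That is why part~(a) claims irreducibility only for $\cO^\pm$.

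At level $k=n+3$ we start from the level-$(n+2)$ groups and apply the same doubling. We use $x_{m+1}=2x_m+1$ from Theorem~\ref{thmB}, so the order comes from two copies glued with one extra factor of $2$ (the swap). For $\cO^0$ we invoke Theorem~\ref{thmC} directly, since $k\ge n+3$ gives an irreducible $T$ whose regular orbit is $\cO^0$. For $\cO^-$ we pair the level-$(n+2)$ groups realising $\cO^-$ and $\cO^0$ on the two halves. The pattern with $2^{k-n-1}-1=3$ zeros is then a union of a $1$-zero pattern and a $2$-zero pattern, and these are not swap-equivalent. We therefore take the fibre product over the common quotient $C_2$ and adjoin an element interchanging the halves that composes with an automorphism matching the two groups. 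We must check this is legitimate, or else use the irreducible $\cO^-$ group in both halves.

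The main obstacle is verifying \emph{uniqueness} of the regular orbit. We must show that every other pattern, in particular the other two candidates among $\cO^-,\cO^0,\cO^+$ and patterns split as $(a,b)$ across the halves, has a nontrivial stabiliser in $U$ beyond what the orbit-size bound allows. We would organise this as a case analysis on how the zeros distribute over the two halves. We reduce each case to the known stabiliser data of the level-$(k-1)$ groups, using the fibre product description of stabilisers. For small $n$, i.e.\ $q=5$ (with $k=n+2=4$) or $q=3$, we would cross-check against the GAP computations of Section~\ref{sec:small}.
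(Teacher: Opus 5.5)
\textbf{There is a genuine gap: the block reduction you commit to cannot produce $\cO^-$ or $\cO^+$ at level $n+2$, nor $\cO^-$ at level $n+3$.}

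Your groups are exactly the $T(R)$ of Proposition~\ref{prop:reduction}: $\Sigma$ sits in each block, and $R\le W_{k-n}$ permutes the $2^{k-n}$ blocks (it does not act inside $W_k$ on all $2^k$ coordinates). For these groups, $\cO^\delta_k$ is regular exactly when $\Stab_R(Y^\delta_{k-n})=1$.

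\emph{Level $k=n+2$.} In $\Omega_2$ the set $Y_2^+=\{0,1,3\}$ is the complement of the point $2$. Hence $\Stab_{W_2}(Y_2^-)=\Stab_{W_2}(Y_2^+)=\langle(1\,3)\rangle$, and for every $R\le W_2$ the orbits $\cO^-$ and $\cO^+$ are regular together or not at all. Explicitly, the subgroups of order $4$ of $W_2\cong D_8$ are $C_4$, $E_2$ and $B_2$. Their sets of regular largest orbits are $\{\cO^-,\cO^0,\cO^+\}$, $\{\cO^-,\cO^+\}$ and $\{\cO^0\}$ (Lemma~\ref{lem:level2}). Your swap-extended fibre product is $E_2$, which has two regular orbits. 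Only the intransitive $B_2$ gives a unique regular orbit, namely $\cO^0$; this part of your plan agrees with the paper.

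\emph{Level $k=n+3$, orbit $\cO^-$.} Your recipe fails here too, for three reasons.
\begin{enumerate}[(i)]
\item The two level-$(n+2)$ groups you pair have different unique regular orbits, so they are not conjugate. Hence no element interchanging the halves normalises their fibre product.
\item A fibre product over $C_2$ extended by a swap has order $2^{2x_{n+2}}$. This is half of $2^{x_{n+3}}=2^{2x_{n+2}+1}$.
\item The fallback $G\wr C_2$, with $G$ having unique regular orbit $\cO^-_{n+2}$, has no regular orbit at all. A vector of $\cO^-_{n+3}$ has one half in $\cO^0_{n+2}$, and that half is fixed by a nontrivial element of $G$.
\end{enumerate}
In fact a short Goursat-type case analysis of $R\cap\{u_1^Lu_2^R:u_1,u_2\in W_2\}$ shows that no $R\le W_3$ of order $2^5$ satisfies $\Stab_R(Y_3^-)=1\ne\Stab_R(Y_3^0),\Stab_R(Y_3^+)$. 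So no block group works here either.

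\textbf{What is missing is a construction that is not blockwise.} The paper uses two such constructions.

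For $\cO^-_{n+2}=\cO_1$ it takes $\Gamma_{n+2}=M_{n+2}\rtimes\langle g_{n+2}\rangle$ from Proposition~\ref{prop:mid}. Its diagonal part is the determinant-one group on all $2^{n+2}$ coordinates, and $\cO^0,\cO^+$ are excluded by the counting Lemma~\ref{lem:count}.

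For $\cO^+_{n+2}$ and $\cO^-_{n+3}$, whose representatives both have zero set $\{0,1,3\}\subseteq\Omega_k$, it takes $M^{(3)}\rtimes\cA_k$ (Proposition~\ref{prop:T3}).
\begin{itemize}
\item $M^{(3)}$ is defined by conditions on the products of entries over the four residue classes modulo $4$. It has index $|A|^3$ in $H$ and meets $A^{\{0,1,3\}}$ trivially.
\item $\cA_k$ consists of the $\xor$-translations whose digits of index at least $2$ are affine functions of the two top digits, and it acts on all of $\Omega_k$. It stabilises $\{0,1,3\}$ trivially but stabilises $\{0\}$ and $\{0,1\}$ nontrivially.
\end{itemize}
Inside $\Omega_k$ the set $\{0,1,3\}$ is no longer the complement of a point. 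That is exactly what removes the coincidence of stabilisers that blocks your approach.

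Your treatment of $\cO^0$ at both levels is fine: the reducible $T(B_2)$ at level $n+2$, and Theorem~\ref{thmC} via $T(R_3)$ at level $n+3$.
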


We do not know whether $\cO^-$ or $\cO^+$ is the unique regular orbit of a subgroup of order $2^{x_m}$ when $k\ge n+4$, nor whether $\cO^0$ is the unique regular orbit of an irreducible such subgroup when $k=n+2$; see Section~\ref{sec:open}.

\medskip
\noindent\textit{Methods.} The tools of \cite{KR} do not carry over directly to $p=2$. Besides the failure of Gluck's theorem mentioned above, the diagonal subgroup of $P_m$ is not elementary abelian if $q\ge5$, and the centralisers of vectors in largest orbits are spread over many layers of the iterated wreath product, not only over the bottom two. Our proofs are based instead on explicit constructions. The key idea for Theorem~\ref{thmC} is to place a group $\Sigma$, generated by the diagonal matrices of determinant $1$ and a monomial matrix of determinant $\zeta$ whose underlying permutation is a $2^n$-cycle, into every block of $2^n$ coordinates; here $\zeta$ generates the multiplicative group of the field. Since $\Sigma$ is regular on both largest orbits at level $n$, this reduces Theorem~\ref{thmC}, for all Fermat primes at once, to a combinatorial question about subgroups of the Sylow $2$-subgroup $W_j$ of $\Sym(2^j)$ acting on subsets of a set with $2^j$ elements (Proposition~\ref{prop:reduction}). We answer this question by means of fibre products (Proposition~\ref{prop:fibre}). All proofs are elementary; the computer was used only for Theorem~\ref{thmD}(b) and for some checks.

\medskip
\noindent\textit{Organisation.} Section~\ref{sec:prelim} collects notation and basic lemmas. Theorems~\ref{thmA}, \ref{thmB}, \ref{thmC}, \ref{thmE} and \ref{thmD} are proved in Sections~\ref{sec:A}, \ref{sec:B}, \ref{sec:C}, \ref{sec:E} and \ref{sec:small}, respectively. Section~\ref{sec:below} contains examples below the maximal order, and Section~\ref{sec:open} lists open problems.

\section{Notation and preliminaries}\label{sec:prelim}

All groups act from the right. For a group $G$ acting on a set $\Omega$ and $\Gamma\subseteq\Omega$ we write $\Stab_G(\Gamma)$ for the set stabiliser of $\Gamma$; for a vector $v$ of a $G$-module, $C_G(v)$ is its stabiliser. An orbit is \emph{regular} if its point stabilisers are trivial.

\subsection{Binary trees}
For $k\ge0$ let $\Omega_k=\{0,1,\dots,2^k-1\}$, and write $x=\sum_{i=0}^{k-1}x_i2^i$ with digits $x_i\in\{0,1\}$. For $x,c\in\Omega_k$ let $x\xor c$ be the element whose digits are $x_i+c_i\bmod2$. For $0\le i\le k$ the residue classes modulo $2^i$ partition $\Omega_k$ into $2^i$ blocks of size $2^{k-i}$, and every block of this partition is the union of two blocks of the next one. Let $W_k$ be the group of all permutations of $\Omega_k$ that map every residue class modulo $2^i$ onto a residue class modulo $2^i$, for all $0\le i\le k$.

Let $k\ge1$. The two residue classes modulo $2$ (the even and the odd numbers) are the \emph{halves} of $\Omega_k$. For $u\in W_{k-1}$ we define $u^L,u^R\in\Sym(\Omega_k)$ by
\[(2y)^{u^L}=2y^u,\quad (2y+1)^{u^L}=2y+1,\qquad (2y)^{u^R}=2y,\quad (2y+1)^{u^R}=2y^u+1\]
for $y\in\Omega_{k-1}$,
and we let $\tau=\tau_k$ be the involution $x\mapsto x\xor1$, which interchanges $2y$ and $2y+1$. For $Y,Y'\subseteq\Omega_{k-1}$ we put
\[(Y,Y')=\{2y:y\in Y\}\cup\{2y'+1:y'\in Y'\}\subseteq\Omega_k,\]
and for $X\subseteq\Omega_k$ we define $X_L,X_R\subseteq\Omega_{k-1}$ by $X=(X_L,X_R)$.

\begin{lemma}\label{lem:tree}
\begin{enumerate}[(a)]
\item Let $k\ge1$. Every element of $W_k$ can be written uniquely as $u_1^Lu_2^R\tau^\varepsilon$ with $u_1,u_2\in W_{k-1}$ and $\varepsilon\in\{0,1\}$, and all such products lie in $W_k$. Consequently $|W_k|=2^{2^k-1}$, and $W_k$ is a Sylow $2$-subgroup of $\Sym(\Omega_k)$ (cf.\ \cite{Ka}).
\item The $2^k$-cycle $g_k\colon x\mapsto x+1\bmod2^k$ and the elementary abelian group $E_k=\{x\mapsto x\xor c: c\in\Omega_k\}$ lie in $W_k$, and both are transitive on $\Omega_k$.
\item Let $k=a+j$ with $a,j\ge0$, and for $\beta\in\Omega_j$ let $B_\beta=\{\beta+2^j\ell:\ell\in\Omega_a\}$ be the residue class of $\beta$ modulo $2^j$. For $u\in W_a$ let $u_{(\beta)}$ act on $B_\beta$ by $\beta+2^j\ell\mapsto\beta+2^j\ell^u$ and trivially on $\Omega_k\setminus B_\beta$, and for $r\in W_j$ let $\hat r$ be the permutation $\beta+2^j\ell\mapsto\beta^r+2^j\ell$. Then all $u_{(\beta)}$ and $\hat r$ lie in $W_k$, the elements $u_{(\beta)}$ generate the stabiliser $B$ in $W_k$ of all the blocks $B_\beta$, $\hat W_j=\{\hat r:r\in W_j\}$ is a complement to $B$ in $W_k$, and $\hat r^{-1}u_{(\beta)}\hat r=u_{(\beta^r)}$.
\end{enumerate}
\end{lemma}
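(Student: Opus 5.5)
The plan is to work directly from the definition of $W_k$ as the group of permutations preserving every partition of $\Omega_k$ into residue classes modulo $2^i$, $0\le i\le k$. For~(a), first check that $u^L$, $u^R$ and $\tau$ lie in $W_k$. Two integers $x,x'\in\Omega_k$ are congruent modulo $2^i$ ($i\ge1$) if and only if they have the same parity and $\lfloor x/2\rfloor\equiv\lfloor x'/2\rfloor \pmod{2^{i-1}}$. Hence $u^L$, which fixes parity and acts by $u$ on $\lfloor x/2\rfloor$ for even $x$, maps residue classes mod $2^i$ onto residue classes mod $2^i$ because $u$ does so mod $2^{i-1}$. The same argument works for $u^R$. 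The map $\tau$ swaps parity and fixes $\lfloor x/2\rfloor$, so it also preserves all these partitions.

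Conversely, let $w\in W_k$. Taking $i=1$, $w$ either fixes or swaps the two halves, so $w\tau^{\varepsilon}$ fixes both halves for a unique $\varepsilon$. A permutation that fixes both halves and lies in $W_k$ induces on each half, identified with $\Omega_{k-1}$ via $y\mapsto 2y$ or $y\mapsto 2y+1$, a permutation preserving all residue classes mod $2^{i-1}$. This follows from the parity/$\lfloor x/2\rfloor$ description above, and gives the elements $u_1,u_2\in W_{k-1}$. Uniqueness is clear from the construction: $\varepsilon$ is determined by the action on halves, and $u_1,u_2$ by the restrictions to the halves. Counting gives $|W_k|=2|W_{k-1}|^2$ with $|W_0|=1$, hence $|W_k|=2^{2^k-1}$. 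Legendre's formula gives $v_2((2^k)!)=2^k-1$, so $W_k$ is a Sylow $2$-subgroup of $\Sym(\Omega_k)$.

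For~(b), translation by $1$ maps residue classes mod $2^i$ to residue classes mod $2^i$, because $2^i\mid 2^k$. The map $x\mapsto x\xor c$ changes digits independently, so it maps $\{x: x\equiv r \bmod 2^i\}$, which is determined by the low $i$ digits, onto another such class. Both groups are clearly transitive.

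For~(c), the key observation is that for $i\le j$ the residue class of $x=\beta+2^j\ell$ mod $2^i$ depends only on $\beta$ mod $2^i$. For $i=j+t$ with $t\ge 0$, the class depends on $\beta$ and on $\ell$ mod $2^t$. From this:
\begin{itemize}
\item $u_{(\beta)}$ preserves all the partitions, because $u\in W_a$ preserves classes of $\ell$ mod $2^t$;
\item $\hat r$ preserves all the partitions, because $r\in W_j$ preserves classes of $\beta$ mod $2^i$ for $i\le j$, and for $i>j$ the pair ($\beta^r$, $\ell$ mod $2^t$) determines the image class.
\end{itemize}
The blocks $B_\beta$ are exactly the classes mod $2^j$, so $W_k$ permutes them. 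The induced permutation of $\Omega_j$ lies in $W_j$. Multiplying an element of $W_k$ by $\hat r^{-1}$ for this $r$ gives an element of $B$, and $B\cap \hat W_j=1$ since $\hat r$ fixes all blocks only when $r=1$. An element of $B$ restricted to $B_\beta$ is, in the $\ell$-coordinate, an element of $W_a$ by the same residue computation. Hence $B=\prod_\beta \{u_{(\beta)}\}$. The conjugation formula is a direct check on points. The main obstacle is just bookkeeping of the residue-class equivalences, so I would state the "$x\equiv x' \bmod 2^{j+t}$ iff $\beta=\beta'$ and $\ell\equiv\ell'\bmod 2^t$" identity once and use it throughout.
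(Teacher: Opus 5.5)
Your proof is correct and follows the paper's argument: the same splitting into halves with $\tau^\varepsilon$ for (a), and in (c) the same observation that the class of $\beta+2^j\ell$ modulo $2^i$ depends only on $\beta$ for $i\le j$, and only on $\beta$ and $\ell \bmod 2^{i-j}$ for $i>j$. The only difference is that you obtain $W_k=B\hat W_j$ directly, by taking the permutation $r\in W_j$ that $w$ induces on the blocks and factoring $w=(w\hat r^{-1})\hat r$; the paper instead deduces it from $B\cap\hat W_j=1$ together with the order count $|B|\,|\hat W_j|=|W_k|$ from (a).
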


\begin{proof}
(a) An element $w\in W_k$ maps the halves to halves; replacing $w$ by $w\tau$ if necessary, we may assume that it fixes both halves. Then $w$ induces permutations $u_1\colon y\mapsto (2y)^w/2$ and $u_2\colon y\mapsto((2y+1)^w-1)/2$ of $\Omega_{k-1}$. As the residue class of $2y$ (or $2y+1$) modulo $2^i$ corresponds to the residue class of $y$ modulo $2^{i-1}$, we have $u_1,u_2\in W_{k-1}$ and $w=u_1^Lu_2^R$. The converse and the uniqueness are clear. Hence $|W_k|=2|W_{k-1}|^2$, so $|W_k|=2^{2^k-1}$, which is the $2$-part of $(2^k)!$.

(b) is clear. (c) For $i\le j$ the residue class of $\beta+2^j\ell$ modulo $2^i$ is determined by $\beta$, and for $i>j$ it is determined by $\beta$ and by $\ell$ modulo $2^{i-j}$. This shows that $u_{(\beta)},\hat r\in W_k$, and that an element of $W_k$ fixing every block $B_\beta$ induces on each $B_\beta$ an element of $W_a$. Since $B\cap\hat W_j=1$ and $|B||\hat W_j|=2^{2^j(2^a-1)}\cdot2^{2^j-1}=|W_k|$, the remaining assertions follow.
\end{proof}

For $X\subseteq\Omega_k$ we define $\sym(X)\ge0$ recursively by $\sym(X)=0$ if $k=0$ and
\[\sym(X)=\sym(X_L)+\sym(X_R)+\epsilon(X)\qquad(k\ge1),\]
where $\epsilon(X)=1$ if $X_L$ and $X_R$ are $W_{k-1}$-conjugate, and $\epsilon(X)=0$ otherwise.

\begin{lemma}\label{lem:subsets}
Let $k\ge1$ and $X,X'\subseteq\Omega_k$. Then $X$ and $X'$ are $W_k$-conjugate if and only if $X_L,X_R$ are $W_{k-1}$-conjugate to $X'_L,X'_R$ or to $X'_R,X'_L$, respectively. Moreover, $\{u_1^Lu_2^R:u_1\in\Stab(X_L),\,u_2\in\Stab(X_R)\}$ is equal to $\Stab_{W_k}(X)$ if $\epsilon(X)=0$, and has index $2$ in it if $\epsilon(X)=1$. In particular $|\Stab_{W_k}(X)|=2^{\sym(X)}$.
\end{lemma}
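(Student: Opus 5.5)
The plan is to derive everything from the decomposition in Lemma~\ref{lem:tree}(a). Every $w\in W_k$ can be written uniquely as $w=u_1^Lu_2^R\tau^\varepsilon$. I will compute how each factor acts on a subset $X=(X_L,X_R)$. Directly from the definitions,
\[(Y,Y')^{u_1^Lu_2^R}=(Y^{u_1},Y'^{u_2}),\qquad (Y,Y')^{\tau}=(Y',Y).\]
Hence $X^w=(X_L^{u_1},X_R^{u_2})$ if $\varepsilon=0$, and $X^w=(X_R^{u_2},X_L^{u_1})$ if $\varepsilon=1$. Here I read the composition from the right, so I must keep track of where $\tau$ sits and which component is acted on by which $u_i$; this is routine.

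For the conjugacy criterion, suppose first that $X'=X^w$. Then the formula gives $X'_L=X_L^{u_1}$ and $X'_R=X_R^{u_2}$, or else $X'_R=X_L^{u_1}$ and $X'_L=X_R^{u_2}$. In either case $X_L,X_R$ are $W_{k-1}$-conjugate to $X'_L,X'_R$ or to $X'_R,X'_L$, as required. Conversely, given $u_1,u_2\in W_{k-1}$ realising either matching, the element $u_1^Lu_2^R$ or $u_1^Lu_2^R\tau$ lies in $W_k$ by Lemma~\ref{lem:tree}(a) and maps $X$ to $X'$.

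For the stabiliser, let $S=\{u_1^Lu_2^R:u_1\in\Stab(X_L),u_2\in\Stab(X_R)\}$. By the formula and the uniqueness in Lemma~\ref{lem:tree}(a), $S$ is exactly the set of elements of $\Stab_{W_k}(X)$ with $\varepsilon=0$. This set is the intersection of $\Stab_{W_k}(X)$ with the kernel of the homomorphism $W_k\to C_2$ recording the action on the two halves, so its index in $\Stab_{W_k}(X)$ is $1$ or $2$. The index is $2$ exactly when some stabilising element has $\varepsilon=1$. By the formula this happens exactly when there are $u_1,u_2$ with $X_R^{u_2}=X_L$ and $X_L^{u_1}=X_R$. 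That condition holds if and only if $X_L$ and $X_R$ are $W_{k-1}$-conjugate: given $X_L^u=X_R$, take $u_1=u$ and $u_2=u^{-1}$. So the index is $2$ precisely when $\epsilon(X)=1$.

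Finally, $|S|=|\Stab_{W_{k-1}}(X_L)|\cdot|\Stab_{W_{k-1}}(X_R)|$, again by uniqueness of the decomposition. Induction on $k$ then gives $|\Stab_{W_k}(X)|=2^{\sym(X_L)+\sym(X_R)+\epsilon(X)}=2^{\sym(X)}$. The base case is $k=0$, where $W_0=1$ and $\sym(X)=0$. The case $k=1$ causes no trouble, since at $k-1=0$ both subgroups involved are trivial. There is no real obstacle in this argument; the only care needed is in the bookkeeping of right actions in the formula for $X^w$.
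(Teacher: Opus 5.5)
Your proposal is correct and is essentially the paper's proof. The paper also derives everything from the unique decomposition $u_1^Lu_2^R\tau^\varepsilon$ of Lemma~\ref{lem:tree}(a) together with the formulas $(X^{u_1^Lu_2^R})_L=X_L^{u_1}$, $(X^{u_1^Lu_2^R})_R=X_R^{u_2}$, $(X^\tau)_L=X_R$ and $(X^\tau)_R=X_L$; you have simply written out the index-$2$ argument and the induction giving $2^{\sym(X)}$, which the paper leaves to the reader.
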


\begin{proof}
This follows from Lemma~\ref{lem:tree}(a), because $(X^{u_1^Lu_2^R})_L=X_L^{u_1}$ and $(X^{u_1^Lu_2^R})_R=X_R^{u_2}$, while $(X^\tau)_L=X_R$ and $(X^\tau)_R=X_L$.
\end{proof}

\subsection{The monomial groups}
Let $n\ge1$ and let $\K$ be a finite field of order $2^n+1$. By Lemma~\ref{lem:arith} below, either $\K=\F_q$ for the Fermat prime $q=2^n+1$, or $n=3$ and $\K=\F_9$. Let $p$ be the characteristic of $\K$, let $A=\K^\times$, a cyclic group of order $2^n$, and fix a generator $\zeta$ of $A$.

For $k\ge0$ let $V=\K^{\Omega_k}$ be the space of row vectors $v=(v_\omega)_{\omega\in\Omega_k}$, with standard basis $(e_\omega)$. Let $H$ be the group of diagonal matrices $d=\diag(d_\omega)$ with $d_\omega\in A$, and put $\det d=\prod_\omega d_\omega$. We let $W_k$ act on $V$ by permutation matrices, $e_\omega w=e_{\omega^w}$, and put $P(n,k)=HW_k=H\rtimes W_k$, the group of monomial matrices with nonzero entries in $A$ and underlying permutation in $W_k$. We regard $V$ as an $\F_pP(n,k)$-module. For $X\subseteq\Omega_k$ let $A^X=\{d\in H: d_\omega=1\text{ for }\omega\notin X\}$ and $v_X=\sum_{\omega\notin X}e_\omega$, and for $v\in V$ let $Z(v)=\{\omega:v_\omega=0\}$ be its \emph{zero set}.

\begin{lemma}\label{lem:orbits}
Let $P=P(n,k)$.
\begin{enumerate}[(a)]
\item Two vectors $v,v'\in V$ lie in the same $P$-orbit if and only if $Z(v)$ and $Z(v')$ are $W_k$-conjugate.
\item For $X\subseteq\Omega_k$ we have $C_P(v_X)=A^X\rtimes\Stab_{W_k}(X)$, and every $v$ with $Z(v)=X$ is $H$-conjugate to $v_X$.
\item $|v^P|=|P|\cdot2^{-n|Z(v)|-\sym(Z(v))}$ for every $v\in V$.
\end{enumerate}
\end{lemma}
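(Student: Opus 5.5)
We prove (b) first, then deduce (a) and (c) from it. The whole argument only uses the semidirect decomposition $P=H\rtimes W_k$ and the fact that monomial matrices permute coordinates and rescale them by elements of $A$.

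For (b), let $X\subseteq\Omega_k$ and take an element $dw\in P$ with $d\in H$ and $w\in W_k$. By definition,
\[
v_X\,dw=\sum_{\omega\notin X}d_\omega e_{\omega^w}.
\]
This vector equals $v_X=\sum_{\omega\notin X}e_\omega$ if and only if two things hold. First, $w$ maps the support $\Omega_k\setminus X$ onto itself, i.e.\ $w\in\Stab_{W_k}(X)$. Second, $d_\omega=1$ for all $\omega\notin X$, i.e.\ $d\in A^X$. Hence
\[
C_P(v_X)=\{dw:d\in A^X,\ w\in\Stab_{W_k}(X)\}.
\]
This set is $A^X\Stab_{W_k}(X)$. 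It is a semidirect product because $\Stab_{W_k}(X)$ normalises $A^X$: conjugating a diagonal matrix by a permutation matrix permutes its diagonal entries, and $\Stab_{W_k}(X)$ preserves $X$. Moreover $A^X\cap W_k=1$.

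For the second claim of (b), take $v$ with $Z(v)=X$. Then $v_\omega\in A=\K^\times$ for every $\omega\notin X$. Let $d$ be the diagonal matrix with $d_\omega=v_\omega^{-1}$ for $\omega\notin X$ and $d_\omega=1$ otherwise. This $d$ lies in $H$ and satisfies $v d=v_X$.

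For (a), note first how zero sets transform. For $g=dw\in P$, the coordinate of $vg$ at $\omega^w$ is $v_\omega d_\omega$, and $d_\omega\neq0$. Hence $Z(vg)=Z(v)^w$.
\begin{itemize}
\item If $v$ and $v'$ lie in the same $P$-orbit, then $Z(v')=Z(v)^w$ for some $w\in W_k$, so the zero sets are $W_k$-conjugate.
\item Conversely, suppose $Z(v')=Z(v)^w$. By (b), $v$ is $P$-conjugate to $v_{Z(v)}$. Since a permutation matrix maps $e_\omega$ to $e_{\omega^w}$, we have $v_{Z(v)}w=v_{Z(v)^w}=v_{Z(v')}$. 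By (b) again, $v_{Z(v')}$ is $P$-conjugate to $v'$. So $v$ and $v'$ lie in the same orbit.
\end{itemize}

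For (c), set $X=Z(v)$. By (b), $|v^P|=|v_X^P|=|P|/|C_P(v_X)|$. The group $A^X$ is a product of $|X|$ copies of the cyclic group $A$ of order $2^n$, so $|A^X|=2^{n|X|}$. By Lemma~\ref{lem:subsets}, $|\Stab_{W_k}(X)|=2^{\sym(X)}$. Multiplying these two orders gives $|C_P(v_X)|$, and the formula in (c) follows.

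Every step is routine. The only point needing care is the direction of the action in (a): with right actions, the zero set transforms as $Z(v)\mapsto Z(v)^w$, and one must check this is consistent with the convention $e_\omega w=e_{\omega^w}$.
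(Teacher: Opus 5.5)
Your proof is correct and is essentially the paper's argument: the paper likewise uses that $H$ is transitive on vectors with a given zero set and that $dw$ sends zero set $X$ to $X^w$ for (a), the same fixed-point computation for $C_P(v_X)$ in (b), and Lemma~\ref{lem:subsets} for (c). Proving (b) before (a) and spelling out why the product $A^X\Stab_{W_k}(X)$ is semidirect only rearranges and expands the same steps.
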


\begin{proof}
Since $A$ acts regularly on $\K\setminus\{0\}$, the group $H$ acts transitively on the vectors with a given zero set, and an element $dw$ ($d\in H$, $w\in W_k$) maps a vector with zero set $X$ to one with zero set $X^w$. This proves (a). An element $dw$ fixes $v_X$ if and only if $w$ stabilises $X$ and $d_\omega=1$ for $\omega\notin X$, which gives (b), and (c) follows from (b) and Lemma~\ref{lem:subsets}.
\end{proof}

If $\K=\F_9$, then for $\omega\in\Omega_k$ let $\varphi_\omega$ be the map applying the field automorphism $x\mapsto x^3$ to the $\omega$-coordinate, and let $\tilde P(3,k)=\langle P(3,k),\varphi_\omega:\omega\in\Omega_k\rangle$; this is the group $\GaL_1(9)\wr W_k$ acting on $V=\F_9^{2^k}$.

\begin{lemma}[cf.\ \cite{We,CF}]\label{lem:sylow}
\begin{enumerate}[(a)]
\item If $\K=\F_q$, then $P(n,k)$ is a Sylow $2$-subgroup of $\GL_{2^k}(q)$.
\item If $\K=\F_9$, then $\tilde P(3,k)$ is a Sylow $2$-subgroup of $\GL_{2^{k+1}}(3)$, and $\tilde P(3,k)$ and $P(3,k)$ have the same orbits on $V$.
\end{enumerate}
\end{lemma}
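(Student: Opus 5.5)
The plan is to compare orders. I would compute $|P(n,k)|$ (and $|\tilde P(3,k)|$) and the $2$-part of the relevant general linear group, and then check that the group in question is indeed a $2$-subgroup of that general linear group. For (a), $P(n,k)=H\rtimes W_k$, so $|P(n,k)|=(2^n)^{2^k}\cdot2^{2^k-1}$ by Lemma~\ref{lem:tree}(a). Since $P(n,k)$ consists of monomial matrices with entries in $\F_q$, it is a $2$-subgroup of $\GL_{2^k}(q)$. It then suffices to show
\[|\GL_{2^k}(q)|_2=2^{n2^k+2^k-1}.\]
I would do this with the standard formula $|\GL_d(q)|_2=\prod_{i=1}^d |q^i-1|_2$ and the lifting-the-exponent lemma. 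Because $q\equiv1\pmod 4$ when $n\ge2$, we have $|q^i-1|_2=|q-1|_2\,|i|_2=2^n|i|_2$. Taking the product over $i\le 2^k$ gives $2^{n2^k}\cdot|(2^k)!|_2=2^{n2^k}\cdot 2^{2^k-1}$, which is what we need. The case $q=3$ (so $n=1$) is the exceptional one: there $|3^i-1|_2$ equals $2$ for odd $i$ and $2^{2+v_2(i)}$ for even $i$, so the count no longer matches. But this case is covered by part (b), since there $\K=\F_9$. If $q=3$ and $\K=\F_3$ are also intended in (a), only $k=0$ is relevant, and then $P=\GL_1(3)$ is trivially Sylow. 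I would point out that for $q=3$ and $k\ge1$ the statement of (a) fails, which is exactly why the paper switches to $\F_9$.

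For (b), the group $\tilde P(3,k)$ acts $\F_3$-linearly on $\F_9^{2^k}\cong\F_3^{2^{k+1}}$. Its order is $|\GaL_1(9)|^{2^k}\cdot|W_k|=16^{2^k}\cdot 2^{2^k-1}=2^{5\cdot2^k-1}$, and it is a $2$-group. On the other side,
\[
\Bigl|\GL_{2^{k+1}}(3)\Bigr|_2=\prod_{i\le 2^{k+1}}|3^i-1|_2 .
\]
The $2^k$ odd values of $i$ contribute $2$ each, and the even values $i=2j$ contribute $|9^j-1|_2=2^{3+v_2(j)}$. The total exponent is therefore $2^k+3\cdot2^k+v_2((2^k)!)=5\cdot2^k-1$, which matches. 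For the orbit statement, each $\varphi_\omega$ preserves zero sets. Since $H$ is already transitive on the vectors with a given zero set (Lemma~\ref{lem:orbits}), adding the $\varphi_\omega$ cannot merge distinct $P(3,k)$-orbits. Hence $\tilde P(3,k)=\langle P(3,k),\varphi_\omega\rangle$ has the same orbits as $P(3,k)$.

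The main obstacle is the $2$-adic valuation bookkeeping, in particular making sure the lifting-the-exponent step is applied correctly when $q\equiv1\pmod4$ and handled separately for $q=3$. Everything else is direct counting.
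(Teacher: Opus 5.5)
Your proposal is correct and follows essentially the same route as the paper. Both proofs compare $\log_2|P(n,k)|$ and $\log_2|\tilde P(3,k)|$ with $\sum_{i\le d}\nu(q^i-1)$, using $\nu(q^i-1)=n+\nu(i)$ for $q\equiv1\pmod4$ and the separate formula for $q=3$, and both get the orbit statement from two facts: the $\varphi_\omega$ preserve zero sets, and $H$ is transitive on vectors with a given zero set. Your caveat is also right. For $\K=\F_3$ and $k\ge1$, part (a) fails: for example, $P(1,1)\cong D_8$ has index $2$ in a Sylow $2$-subgroup of $\GL_2(3)$. The paper's proof tacitly excludes this case by assuming $q\equiv1\pmod4$, which matches its use of $P(n,m-1)$ only for $q\ge5$.
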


\begin{proof}
For an integer $N$ let $\nu(N)$ be the exponent of $2$ in $N$. (a) As $q\equiv1\pmod4$, induction on $\nu(i)$ gives $\nu(q^i-1)=n+\nu(i)$: for odd $i$ the number $(q^i-1)/(q-1)=1+q+\dots+q^{i-1}$ is odd, and $q^{2i}-1=(q^i-1)(q^i+1)$ with $q^i+1\equiv2\pmod4$. With $d=2^k$ we obtain $\nu(|\GL_d(q)|)=\sum_{i=1}^d(n+\nu(i))=nd+\nu(d!)=nd+d-1$, which is $\log_2|P(n,k)|$. (b) Similarly, $\nu(3^i-1)=1$ for odd $i$ and $\nu(3^i-1)=2+\nu(i)$ for even $i$. With $d=2^{k+1}$ this gives $\nu(|\GL_d(3)|)=d/2+d+\nu(d!)=5\cdot2^k-1=\log_2|\tilde P(3,k)|$, and $\tilde P(3,k)\le\GL_d(3)$. Finally, $\tilde P(3,k)$ maps vectors with zero set $X$ to vectors with zero sets in the $W_k$-class of $X$, and $P(3,k)$ is already transitive on these vectors by Lemma~\ref{lem:orbits}.
\end{proof}

Accordingly, for a Fermat prime $q\ge5$ we put $P_m=P(n,m-1)$, and for $q=3$ and $m\ge2$ we put $P_m=\tilde P(3,m-2)$; these are the groups of the introduction. The largest orbit size and the largest orbits of $P_m$ are those of $P(n,k)$, where $k$ is the level.

\subsection{Three general lemmas}

\begin{lemma}\label{lem:criterion}
Let $P=P(n,k)$, let $2^x$ be the largest orbit size of $P$ on $V$, and let $T\le P$ with $|T|=2^x$. Then every regular orbit of $T$ is a largest orbit of $P$, and a largest orbit $\cO$ of $P$ is a regular orbit of $T$ if and only if $T\cap C_P(v)=1$ for some (equivalently, every) $v\in\cO$.
\end{lemma}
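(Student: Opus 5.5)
The plan is to argue with orbit sizes and stabilisers via the orbit–stabiliser theorem, using only that $T\le P$ and that $2^x$ is the maximum of $|v^P|$ over $v\in V$.

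\emph{Regular orbits of $T$ are largest orbits of $P$.} Let $v^T$ be a regular orbit of $T$. Then $|v^T|=|T|=2^x$. Since $v^T\subseteq v^P$, we get $|v^P|\ge 2^x$. By maximality, $|v^P|=2^x$. Hence $v^P$ is a largest orbit of $P$, and the $T$-orbit of $v$ lies inside it.

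\emph{The criterion for a largest orbit $\cO$ of $P$.} Fix $v\in\cO$ and compute stabilisers. We have $C_T(v)=T\cap C_P(v)$, so $v^T$ is regular if and only if $T\cap C_P(v)=1$.

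If $T\cap C_P(v)=1$, then $|v^T|=2^x=|\cO|$ and $v^T\subseteq\cO$. Thus $v^T=\cO$, so $\cO$ is a single $T$-orbit and it is regular.

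Conversely, suppose $\cO$ is a regular orbit of $T$, meaning $\cO$ is one $T$-orbit with trivial point stabilisers. Then $T\cap C_P(v)=C_T(v)=1$ for every $v\in\cO$.

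\emph{``Some'' is equivalent to ``every''.} Suppose $T\cap C_P(v)=1$ for one $v\in\cO$. By the previous paragraph, $\cO=v^T$ is a regular $T$-orbit, so every $w\in\cO$ has $C_T(w)=1$, that is, $T\cap C_P(w)=1$.

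An alternative view is a counting argument. Each $T$-orbit in $\cO$ has size at most $|T|=|\cO|$, with equality exactly for regular orbits. So $\cO$ is a regular $T$-orbit if and only if it contains a point with trivial $T$-stabiliser.

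There is no real obstacle here. The only point needing care is the size comparison $|v^T|=|\cO|$, which forces $v^T=\cO$; this uses that $|\cO|=2^x=|T|$ exactly.
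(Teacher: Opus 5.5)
Your proof is correct and uses the same orbit--stabiliser argument as the paper: a regular $T$-orbit has size $2^x$ and lies in a $P$-orbit of size at most $2^x$, and for $v\in\cO$ one has $v^T=\cO$ exactly when $|T:T\cap C_P(v)|=2^x$. In your first paragraph, state explicitly that $v^T\subseteq v^P$ and $|v^T|=|v^P|=2^x$ give $v^T=v^P$, so the regular $T$-orbit \emph{is} the largest $P$-orbit and is not merely contained in it.
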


\begin{proof}
This is \cite[Lemma~4.4]{KR}: a regular orbit of $T$ has size $2^x$ and lies in an orbit of $P$; and for $v\in\cO$ we have $v^T=\cO$ if and only if $|T:T\cap C_P(v)|=2^x$, in which case all point stabilisers in $T$ are conjugate.
\end{proof}

\begin{lemma}\label{lem:irr}
Let $T\le P(n,k)$, let $M=T\cap H$, and let $\bar T$ be the image of $T$ in $W_k$. Assume that
\begin{enumerate}[(i)]
\item $\bar T$ is transitive on $\Omega_k$;
\item for every $\omega$ there is $d\in M$ with $\F_p(d_\omega)=\K$; and
\item for any two distinct $\omega,\omega'$ there is $d\in M$ with $d_\omega\ne1=d_{\omega'}$.
\end{enumerate}
Then $V$ is an irreducible $\F_pT$-module. If (i) fails, then $V$ is reducible.
\end{lemma}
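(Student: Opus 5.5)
Since $T$ is a $2$-group and $p$ is odd, Maschke's theorem makes $V$ completely reducible as an $\F_pT$-module. The plan is to first decompose $V$ under the diagonal part $M=T\cap H$, and then let $T$ permute the resulting homogeneous components. For each $\omega$, the coordinate line $\K e_\omega$ is $M$-invariant, and $M$ acts on it through the character $\lambda_\omega\colon d\mapsto d_\omega\in A\le\K^\times$. By (ii), $\F_p[\lambda_\omega(M)]=\K$, so $\K e_\omega$ is an irreducible $\F_pM$-module: an $M$-invariant $\F_p$-subspace is a module over $\F_p[\lambda_\omega(M)]=\K$, hence is $0$ or $\K e_\omega$.

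Next I will show that the lines $\K e_\omega$ are pairwise non-isomorphic $\F_pM$-modules. Suppose $\K e_\omega\cong\K e_{\omega'}$ with $\omega\ne\omega'$. Take $d\in M$ as in (iii), so $d_\omega\ne1=d_{\omega'}$. Then $d$ acts trivially on $\K e_{\omega'}$ but not on $\K e_\omega$, because multiplication by $d_\omega\ne1$ fixes no nonzero vector. This contradicts the isomorphism. Hence $V=\bigoplus_\omega\K e_\omega$ is a multiplicity-free decomposition into irreducible $\F_pM$-modules. Consequently every $\F_pM$-submodule of $V$ is a sum of some of the lines, namely $\bigoplus_{\omega\in S}\K e_\omega$ for some $S\subseteq\Omega_k$. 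This follows from the standard fact that submodules of a semisimple module with pairwise non-isomorphic simple summands are sums of summands; it can be checked via projections to isotypic components.

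Now let $U\ne0$ be an $\F_pT$-submodule, so $U=\bigoplus_{\omega\in S}\K e_\omega$ with $S\ne\emptyset$. An element $t=dw\in T$ maps $\K e_\omega$ to $\K e_{\omega^w}$, so $S$ is invariant under $\bar T$. By (i), $S=\Omega_k$, and therefore $U=V$. This proves irreducibility.

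For the converse, suppose $\bar T$ is intransitive and let $S$ be a $\bar T$-orbit with $S\ne\Omega_k$. Every monomial matrix in $T$ maps $\bigoplus_{\omega\in S}\K e_\omega$ into itself, so this is a proper nonzero submodule and $V$ is reducible. The only step that needs care is the multiplicity-freeness argument in the second paragraph; the rest is routine bookkeeping.
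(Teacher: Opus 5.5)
Your proof is correct and follows the paper's argument essentially step for step: (ii) and (iii) make the lines $\K e_\omega$ irreducible and pairwise non-isomorphic $\F_pM$-modules, so every $\F_pM$-submodule is a sum of lines. Transitivity of $\bar T$ then forces any nonzero $T$-submodule to be all of $V$, and a $\bar T$-orbit gives a proper submodule when (i) fails. The appeal to Maschke for $T$ is harmless but unnecessary, since the explicit decomposition of $V$ into the lines already gives semisimplicity over $M$.
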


\begin{proof}
The lines $L_\omega=\K e_\omega$ are $\F_pM$-submodules, and $d\in M$ acts on $L_\omega$ as multiplication by $d_\omega$. By (ii), $L_\omega$ is an irreducible $\F_pM$-module, and by (iii), $L_\omega\not\cong L_{\omega'}$ for $\omega\ne\omega'$. As $M$ is a $2$-group and $p$ is odd, $V=\bigoplus_\omega L_\omega$ is a direct sum of pairwise non-isomorphic irreducible modules, so every $\F_pM$-submodule of $V$ is a sum of some of the $L_\omega$. Since $T$ permutes the $L_\omega$ as $\bar T$ permutes $\Omega_k$, (i) implies that $V$ is irreducible. If $\Lambda$ is a $\bar T$-orbit, then $\bigoplus_{\omega\in\Lambda}L_\omega$ is $T$-invariant.
\end{proof}

If $\K=\F_q$, condition (ii) is automatic. In our applications, (ii) and (iii) will be checked with the elements $d^{\omega,\omega''}\in H$ which have entry $\zeta$ at $\omega$, entry $\zeta^{-1}$ at $\omega''\ne\omega$, and entry $1$ elsewhere; note that $\det d^{\omega,\omega''}=1$.

\begin{lemma}\label{lem:count}
Let $M\le H$ and $X\subseteq\Omega_k$ with $|M|\,|A|^{|X|}>|H|$. Then $M\cap A^X\ne1$. Consequently, if $T\le P(n,k)$ and $T\cap H\supseteq M$, then the orbit of $v_X$ is not a regular orbit of $T$.
\end{lemma}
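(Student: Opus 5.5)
The argument is a counting bound in the group $H$, followed by an appeal to Lemma~\ref{lem:orbits}(b) for the consequence.

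For the first assertion, consider the product set $MA^X$ inside the abelian group $H$. Both $M$ and $A^X$ are subgroups of $H$, so
\[
|MA^X|=\frac{|M|\,|A^X|}{|M\cap A^X|}.
\]
By definition, $A^X$ consists of the diagonal matrices whose entries outside $X$ equal $1$. Hence $A^X\cong A^{|X|}$ and $|A^X|=|A|^{|X|}$. If $M\cap A^X=1$, we would get $|MA^X|=|M|\,|A|^{|X|}>|H|$, which is impossible because $MA^X\subseteq H$. Therefore $M\cap A^X\ne1$.

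For the consequence, let $T\le P(n,k)$ with $T\cap H\supseteq M$, and pick $1\ne d\in M\cap A^X$. Lemma~\ref{lem:orbits}(b) gives $A^X\le C_P(v_X)$. Directly: $v_X$ has all coordinates outside $X$ equal to $1$ and all coordinates in $X$ equal to $0$. Since $d$ has entry $1$ outside $X$, it fixes the coordinates outside $X$, and it fixes the zero coordinates trivially, so $v_Xd=v_X$. Thus $d\in T\cap C_P(v_X)$ is a nontrivial element of $T$ fixing $v_X$. The point stabiliser of $v_X$ in $T$ is therefore nontrivial, and the $T$-orbit of $v_X$ is not regular. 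This does not need Lemma~\ref{lem:criterion} or any hypothesis on $|T|$.

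No step here is a real obstacle. The only points to check are that $|A^X|=|A|^{|X|}$, and that the product formula for two subgroups applies; it does, since $H$ is abelian, so $MA^X$ is itself a subgroup of $H$.
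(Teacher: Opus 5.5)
Your proof is correct and is the same argument as the paper's: it also gets $M\cap A^X\ne1$ from $|MA^X|\le|H|$, and the consequence from $A^X\le C_{P(n,k)}(v_X)$, and you have written out the details. One small point: the count $|MA^X|=|M|\,|A^X|/|M\cap A^X|$ holds for any two subgroups, so you do not need $H$ to be abelian for this step.
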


\begin{proof}
The first assertion holds as $|MA^X|\le|H|$, and the second follows since $A^X\le C_{P(n,k)}(v_X)$.
\end{proof}

\section{Reduction: proof of Theorem~\ref{thmA}}\label{sec:A}

\begin{lemma}\label{lem:dim}
Let $G$ be a $2$-group and $V$ an irreducible $\F_qG$-module, where $q$ is an odd prime. Then $\dim V$ is a power of $2$.
\end{lemma}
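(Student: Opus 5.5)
We argue by induction on $|G|$, following the standard route through Clifford theory and the structure of $2$-groups. Let $G$ be a $2$-group and $V$ an irreducible $\F_qG$-module with $q$ odd. Put $E=\End_{\F_qG}(V)$, a finite field $\F_{q^e}$. Then $V$ is an absolutely irreducible $EG$-module with $\dim_{\F_q}V=e\cdot\dim_E V$. It therefore suffices to show two things: that $\dim_E V$ is a power of $2$, and that $e$ is a power of $2$.

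For the first, take the splitting field $\bar E$ of $G$ over $E$. Over $\bar E$ the module stays absolutely irreducible of the same dimension. Since $q$ is coprime to $|G|$, its dimension is the degree of an ordinary irreducible character of $G$, by Brauer lifting in the coprime case. The degree of an ordinary irreducible character divides $|G|$, so $\dim_E V$ is a power of $2$.

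For $e$, we use that $V\otimes_{\F_q}\bar{\F}_q$ splits into $e$ Galois conjugate absolutely irreducible constituents. These are permuted transitively by $\mathrm{Gal}$, and $e$ is the size of the Galois orbit of the Brauer character $\chi$ of one constituent. Equivalently, $e=[\F_q(\chi):\F_q]$, where $\F_q(\chi)$ is generated by the character values, read modulo $q$ as elements of $\F_q(\mu_{2^a})$ with $2^a=\exp G$. The field $\F_q(\mu_{2^a})=\F_{q^f}$ has $f$ equal to the multiplicative order of $q$ modulo $2^a$. Because $(\mathbb Z/2^a)^\times$ is a $2$-group, $f$ is a power of $2$. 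The Galois group of $\F_{q^f}/\F_q$ is cyclic of order $f$, so every intermediate field, in particular $\F_q(\chi)$, has degree dividing $f$. Hence $e$ is a power of $2$. Here we also use that the Schur index is $1$ over finite fields (Wedderburn), so that $E$ equals the character field.

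Combining the two facts, $\dim_{\F_q}V=e\cdot\dim_E V$ is a power of $2$. The main point needing care is the identification of $e$ with the degree of the character field. This is standard (for example, Isaacs's \emph{Character Theory}, Theorem 9.21 together with the Brauer--Wedderburn results), and we would cite it rather than reprove it. Everything else is elementary.

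As an alternative avoiding characters, one can do a Clifford-theoretic induction. If $V$ is imprimitive, then $V=\Ind_H^G U$ with $|G:H|=2$, and induction applies to $U$. If $V$ is primitive, then every normal abelian subgroup acts homogeneously and hence cyclically. One then uses the classification of $2$-groups all of whose characteristic abelian subgroups are cyclic (Hall: central products of cyclic, dihedral, semidihedral, generalized quaternion and extraspecial groups) to compute dimensions directly. The character-theoretic route is shorter, so we would use it.
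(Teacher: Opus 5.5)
Your proof is correct, but it takes a different and much heavier route than the paper.

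\textbf{The paper's argument.} It is a short elementary induction on $|G|$. Take a subgroup $M$ of index $2$, which is normal, an irreducible $\F_qM$-submodule $W\le V$, and $g\in G\setminus M$. Then $Wg$ is again an irreducible $M$-submodule, and $W+Wg$ is $G$-invariant (since $g^2\in M$), so it equals $V$. Also $W\cap Wg\in\{0,W\}$, so $\dim V\in\{\dim W,2\dim W\}$. Induction applies to $(M,W)$ because the lemma does not require faithfulness. This uses no character theory, and it does not even use that $q$ is odd or that the field is finite.

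\textbf{Your argument.} You factor $\dim_{\F_q}V=e\cdot\dim_EV$ and bound the two factors separately.
\begin{enumerate}[(i)]
\item For $\dim_EV$ you use coprime Brauer lifting together with Frobenius divisibility of character degrees.
\item For $e$ you use the Galois action. The $e$ constituents of $V\otimes_{\F_q}\bar{\F}_q$ are pairwise non-isomorphic because $E$ is commutative. Irreducible modules are determined by their traces, so the orbit size is the degree of the field of trace values. That field lies in $\F_q(\mu_{2^a})$, so $e$ divides the multiplicative order of $q$ modulo $\exp G$.
\end{enumerate}
Each step is a standard but nontrivial citation. In return you get finer information than the bare statement: the dimension splits into an ordinary character degree times the degree of a character field.

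\textbf{Minor points.} Your fallback via primitivity and Hall's classification is unnecessary. Restricting to an index-$2$ subgroup, instead of inducing from a block stabiliser, avoids any analysis of the primitive case. Also, your opening sentence announces an induction on $|G|$, but the character-theoretic argument you actually give is not inductive.
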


\begin{proof}
We argue by induction on $|G|$. If $G=1$, then $\dim V=1$. Otherwise let $M$ be a subgroup of index $2$, let $W$ be an irreducible $\F_qM$-submodule of $V$, and let $g\in G\setminus M$. Then $Wg$ is an $M$-submodule, $W+Wg$ is $G$-invariant and hence equal to $V$, and $W\cap Wg$ is $0$ or $W$. Thus $\dim V\in\{\dim W,2\dim W\}$, and $\dim W$ is a power of $2$ by induction.
\end{proof}

\begin{lemma}\label{lem:arith}
Let $r$ be a prime and $e\ge1$ such that $r^e-1$ is a power of $2$. Then $r^e=2$, or $e=1$ and $r$ is a Fermat prime, or $r^e=9$.
\end{lemma}

\begin{proof}
If $r=2$, then $e=1$. Let $r$ be odd. If $e$ is even, then $(r-1)(r+1)$ divides $r^e-1$, so $r-1$ and $r+1$ are powers of $2$ and $r=3$; by the proof of Lemma~\ref{lem:sylow}, $3^e-1=2^{2+\nu(e)}\le4e$, so $e=2$. If $e\ge3$ is odd, then $(r^e-1)/(r-1)$ is odd and greater than $1$, which is impossible. Finally, if $2^a+1$ is prime, then $a$ is a power of $2$.
\end{proof}

\begin{proposition}\label{prop:centraliser}
Let $G$, $F$ and $V$ be as in Theorem~\ref{thmA}, let $q=\operatorname{char}F$ and $C=C_{\GL(V)}(G)$.
\begin{enumerate}[(a)]
\item $C$ is isomorphic to a subgroup of $G$.
\item $q$ is a Fermat prime, $V$ is irreducible as an $\F_qG$-module, and $\End_{\F_qG}(V)$ is $\F_q$ or $\F_9$.
\end{enumerate}
\end{proposition}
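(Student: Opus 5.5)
The plan is to let the centraliser $C$ act on the unique regular orbit and read off both parts from that action.

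For (a), let $\cO$ be the unique regular orbit of $G$ on $V$. Every $c\in C$ commutes with $G$, so it maps $G$-orbits to $G$-orbits and preserves stabilisers: $C_G(vc)=C_G(v)$. Hence $c$ maps regular orbits to regular orbits, and by uniqueness $\cO c=\cO$. Thus $C$ acts on $\cO$. Fix $v\in\cO$. For $c\in C$ there is a unique $g_c\in G$ with $vc=vg_c$, because $G$ is regular on $\cO$. The map $c\mapsto g_c$ is injective. Indeed, if $g_c=g_{c'}$ then $v(c c'^{-1})=v$, so $cc'^{-1}$ fixes the whole orbit $vG$ pointwise, since $vgc=vcg$. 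The orbit $vG$ spans a nonzero $G$-submodule of $V$, which is all of $V$ by irreducibility. (A faithful $FG$-module that is irreducible over $F$ is generated by any nonzero vector over $F$.) Here we must be careful, since $c$ is only $\F_q$-linear, while the $\F_q$-span of $vG$ need not be all of $V$. I would handle this by noting that $\mathrm{span}_{\F_q}(vG)$ is a nonzero $\F_qG$-submodule of $V$, and that $cc'^{-1}$ is the identity on it. I would then also let $c$ act on $\cO$ via other points $w\in\cO$. Since $\cO$ is a single $G$-orbit, $\cO=vG$, so $cc'^{-1}$ fixes all of $\cO$ pointwise. The orbit $\cO$ is stable under multiplication by $F^\times$ scalars that commute with $G$, since those lie in $C$ and preserve $\cO$. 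So $\cO$ is a union of $F^\times$-cosets, and its $F$-span, which is $V$, equals the $\F_q$-span of $\cO$ together with its $F^\times$-multiples. That span is just the $\F_q$-span of $\cO$. Hence $cc'^{-1}=1$. Finally, $c\mapsto g_c$ is an anti-homomorphism: $vcc'=vg_cc'=vc'g_c=vg_{c'}g_c$. Composing with inversion gives an injective homomorphism $C\to G$, which proves (a).

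For (b), write $E=\End_{\F_qG}(V)$, a finite division ring and hence a field. By Schur's lemma, $E^\times\le C$ contains nontrivial elements whenever $|E|>2$. The characteristic $q$ is odd: if $q=2$, a nontrivial $2$-group acting on a nonzero $\F_2$-space fixes a nonzero vector, so $V$ cannot be faithful irreducible with $G\ne1$. By (a), $E^\times$ is isomorphic to a subgroup of the $2$-group $G$, so $|E|-1$ is a power of $2$. Lemma~\ref{lem:arith} then gives $E=\F_r$ with $r$ a Fermat prime (so $r=q$), or $|E|=9$. In particular $q$ is a Fermat prime, and $E\cong\F_q$ or $\F_9$, the latter only for $q=3$.

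It remains to show that $V$ is irreducible over $\F_qG$. Decompose $V$ as an $\F_qG$-module; it is semisimple by Maschke. If it were reducible, the $F$-structure shows $V\cong W\otimes_{\F_q}F'$-type sums of Galois conjugates, so $V\cong\bigoplus W^{\sigma}$ with $W^{\sigma}$ pairwise isomorphic or not. In either case I would produce a nontrivial element of $C$ of odd order, or show that $C$ is too large. If the summands are isomorphic, $E$ contains $M_s(\F_{q'})$ with $s\ge2$, so $E$ is not a field, contradicting Schur. If they are non-isomorphic, $E\supseteq\F_q\times\F_q$, so $C$ contains elements with eigenvalue pattern $(\lambda,1)$. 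Then the fixed space $\ker(c-1)$ is a proper nonzero $G$-submodule. This $c\in C$ permutes $\cO$ and fixes $v\in\cO$ only if it is trivial, by the injectivity argument of (a) (which only used $vc=v$, not irreducibility). Decomposing $v=v_1+v_2$ and taking $c=\mathrm{diag}(\lambda,1)$ with $\lambda\ne1$ gives $vc\in\cO$, so $vc=vg$ with $g\ne1$, and then $v_2g=v_2$, $v_1g=\lambda v_1$. Iterating over the whole group $\mathrm{diag}(\F_q^\times,1)\times\mathrm{diag}(1,\F_q^\times)\le C$ yields $|C|\ge(q-1)^2$ embedded in $G$. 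I would combine this with the regularity of the stabilisers to derive a contradiction. This last step, ruling out reducibility over $\F_q$ when the constituents are Galois-conjugate and non-isomorphic, is the main obstacle. My first attempt would be to show that $C$ then contains a Singer-type cyclic group of order $q^s-1$ with $s\ge2$, which is not a power of $2$ unless $q^s=9$, and to treat that case directly.
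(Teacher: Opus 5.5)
Your part (a) is correct and is essentially the paper's argument. $C$ stabilises the unique regular orbit $\cO$, the map $c\mapsto g_c^{-1}$ is a homomorphism, and its kernel fixes $\cO$ pointwise. The $\F_q$-span of $\cO$ is $F^\times$-stable and $G$-stable, hence equals $V$.

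Part (b) has a genuine gap, in fact two. First, you call $E=\End_{\F_qG}(V)$ a division ring, and use $|E^\times|=|E|-1$ to get that $q$ is a Fermat prime, before you know that $V$ is irreducible over $\F_q$. As ordered, this is circular. It is easily repaired as in the paper: the scalars $F^\times$ (already $\F_q^\times$) lie in $C$, so by (a) $|F|-1$ is a power of $2$, and Lemma~\ref{lem:arith} together with $q\ne2$ gives that $q$ is a Fermat prime.

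Second, and more seriously, you do not prove irreducibility over $\F_q$; you flag it yourself as the main obstacle. Your case split also rests on a misconception. Galois-conjugate summands arise when you \emph{extend} scalars from $\F_q$ to $F$, not when you \emph{restrict} an $FG$-module to $\F_qG$. Let $W$ be an irreducible $\F_qG$-submodule of $V$. For each $\lambda\in F^\times$, multiplication by $\lambda$ is an $\F_qG$-isomorphism $W\to\lambda W$. Moreover $\sum_{\lambda\in F^\times}\lambda W$ is an $FG$-submodule, hence equals $V$. So $V\cong W^{e}$ is homogeneous over $\F_qG$, and your non-isomorphic case never occurs; the Singer-cycle plan is unnecessary.

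In the homogeneous case, your contradiction ``$E$ is not a field, contradicting Schur'' is invalid, because Schur's lemma presupposes the irreducibility you are trying to prove. The missing step is to use (a). If $e\ge2$, write $V=W_1\oplus\dots\oplus W_e$ and fix an $\F_qG$-isomorphism $\theta\colon W_1\to W_2$. The shear $w_1+w_2+\dots+w_e\mapsto w_1+(w_2+\theta(w_1))+w_3+\dots+w_e$ commutes with $G$, so it lies in $C$, and it has odd order $q$. It therefore cannot embed into the $2$-group $G$. Only after this is $E$ a field. Then $E^\times\le C$, part (a) and Lemma~\ref{lem:arith} give $E\in\{\F_q,\F_9\}$, as you say.
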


\begin{proof}
(a) Let $\cO$ be the regular orbit and $c\in C$. Since $C_G(vc)=C_G(v)$ for all $v\in V$, the map $c$ permutes the regular orbits, so $\cO c=\cO$. Fix $v\in\cO$. For $c\in C$ there is a unique $g_c\in G$ with $vc=vg_c$, and as $c$ commutes with $G$, we have $g_{cc'}=g_{c'}g_c$; so $c\mapsto g_c^{-1}$ is a homomorphism. If $g_c=1$, then $c$ fixes $vh$ for every $h\in G$, i.e., $c$ fixes $\cO$ pointwise. The set $\cO$ is invariant under the scalars $F^\times\le C$, so its $\F_q$-span is an $FG$-submodule, hence equal to $V$, and $c=1$.

(b) If $q=2$, then $V$ is the trivial module, contradicting $G\ne1$. By (a), $F^\times$ is a $2$-group, so $q$ is a Fermat prime by Lemma~\ref{lem:arith}. Let $W$ be an irreducible $\F_qG$-submodule of $V$. Then $\sum_{\lambda\in F^\times}\lambda W$ is a nonzero $FG$-submodule, so $V=W_1\oplus\dots\oplus W_e$ with $W_i\cong W$. If $e\ge2$, fix an isomorphism $\theta\colon W_1\to W_2$; then $w_1+w_2+\dots+w_e\mapsto w_1+(w_2+\theta(w_1))+w_3+\dots+w_e$ ($w_i\in W_i$) is an element of $C$ of order $q$, contradicting (a). Hence $V$ is irreducible over $\F_q$, and $K=\End_{\F_qG}(V)$ is a finite field with $K^\times\le C$. By (a) and Lemma~\ref{lem:arith}, $K$ is $\F_q$ or $\F_9$.
\end{proof}

\begin{proposition}\label{prop:primitive}
Let $G$ be a $2$-group, $q$ a Fermat prime and $V$ a faithful irreducible $\F_qG$-module. Let $V=V_1\oplus\dots\oplus V_r$ be a decomposition into subspaces permuted transitively by $G$ with $\dim V_1$ minimal, let $H$ be the stabiliser of $V_1$ and $\bar H=H/C_H(V_1)$. Then $V\cong\Ind_H^G(V_1)$, and either $\dim V_1=1$, or $q=3$, $\dim V_1=2$ and $\bar H\in\{C_8,Q_8,\SD_{16}\}$.
\end{proposition}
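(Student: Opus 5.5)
The first step is the standard Clifford-theoretic one. Since $V$ is irreducible and $G$ permutes $V_1,\dots,V_r$ transitively, $V=\sum_{g}V_1g$ with $r=|G:H|$ summands. Hence the natural map $V_1\otimes_{\F_qH}\F_qG\to V$ is surjective. Comparing dimensions, it is an isomorphism, so $V\cong\Ind_H^G(V_1)$. It follows that $V_1$ is an irreducible $\F_qH$-module, since inducing a proper submodule would give a proper $G$-submodule of $V$.

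Next, $V_1$ is primitive for $\bar H$. If $V_1=U_1\oplus\dots\oplus U_s$ with $s\ge2$ were an imprimitivity decomposition for $H$, then $\{U_ig\}$ would be a decomposition of $V$ permuted transitively by $G$ with smaller block dimension, contradicting minimality. So the task is to determine the primitive irreducible $2$-subgroups $\bar H\le\GL(V_1)$ over $\F_q$.

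The main tool is the following observation. Suppose $\dim V_1>1$ and $M$ is a subgroup of index $2$ in $\bar H$ for which $V_1|_M$ is reducible. Then, as in the proof of Lemma~\ref{lem:dim}, $V_1=W\oplus Wg$ with $W$ an irreducible $M$-submodule and $g\in\bar H\setminus M$. This is an imprimitivity decomposition, which is impossible. Hence every maximal subgroup of $\bar H$ acts irreducibly on $V_1$. By Clifford's theorem, every normal abelian subgroup $B$ of $\bar H$ acts homogeneously. Over $\F_q$ this means $B$ acts faithfully through the multiplicative group of a field $\F_{q^f}\subseteq\End(V_1)$, and in particular $B$ is cyclic. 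So all normal abelian subgroups of $\bar H$ are cyclic. I would then invoke P.~Hall's classification: $\bar H$ is cyclic, or of maximal class (dihedral, semidihedral or generalised quaternion), or a central product of an extraspecial $2$-group with one of these.

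For $q\ge5$ I would exclude every case with $\dim V_1>1$, as follows.
\begin{itemize}
\item If $\bar H$ is cyclic of order $2^a$ acting irreducibly, then its index-$2$ subgroup has order $2^{a-1}$. By the $2$-adic valuation computation in Lemma~\ref{lem:sylow}, this subgroup acts through a field of degree $f/2$ when $f=\dim V_1\ge2$. So it is reducible, contradicting the observation above.
\item In the noncyclic cases, the relevant normal subgroup contains a subgroup $\langle i\rangle$ of order $4$ with $i^2=-1$, and $-1$ is central and scalar. Since $q\equiv1\pmod4$, the element $i$ is diagonalisable over $\F_q$ with two eigenvalues. Homogeneity forces $i$ to be scalar, which contradicts $\langle i\rangle$ being noncentral (e.g.\ in $Q_8$).
\end{itemize}
This leaves $\dim V_1=1$.

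For $q=3$, the same cyclic-subgroup argument bounds the degree: the field through which a normal cyclic subgroup acts has $2$-part of its multiplicative group forcing $f\le2$, since $\F_9$ is the only admissible field by Lemma~\ref{lem:arith}. This gives $\dim V_1\le2$ once the extraspecial factors are ruled out, because they would need a faithful primitive module of dimension at least $4$ in which some noncentral element of order $4$ has only two eigenvalues. For $\dim V_1=2$, $\bar H$ is then a primitive subgroup of the Sylow $2$-subgroup $\SD_{16}$ of $\GL_2(3)$. Going through its subgroups, $C_2^2$ and $C_4$ (non-scalar) are reducible or imprimitive, $D_8$ is monomial, and the survivors are $C_8$, $Q_8$ and $\SD_{16}$. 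For $\dim V_1=1$, $\bar H\le\GL_1(3)=C_2$.

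I expect the main obstacle to be the noncyclic Hall-type cases, and in particular showing cleanly that a central product with an extraspecial group can never be primitive over $\F_q$ in dimension at least $4$. My first attempt there would be to use the maximal-subgroup observation inductively, producing a reducible index-$2$ subgroup from the splitting of $x^2+1$ over $\F_q$ (or over $\F_9$ when $q=3$).
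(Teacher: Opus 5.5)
The parts of your proposal that are sound are: the Clifford reduction to a primitive $V_1$, the observation that every maximal subgroup of $\bar H$ acts irreducibly, the proof that normal abelian subgroups of $\bar H$ are cyclic, and the case $q\ge5$. In that case a normal $\langle i\rangle$ with $i^2=-1$ must act as a scalar because $q\equiv1\pmod4$, so it is central, which it is not.

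The genuine gap is the bound $\dim V_1\le2$ when $q=3$. You get $f\le2$ from Lemma~\ref{lem:arith}, but that lemma needs $3^f-1$ to be a power of $2$, and nothing here provides this. Proposition~\ref{prop:primitive} has no regular-orbit hypothesis. Even in Theorem~\ref{thmA}, the lemma is applied to $\End_{\F_qG}(V)$, whose units centralise $G$, not to the field generated by a cyclic subgroup of $\bar H$; for instance, $C_{16}$ acts faithfully and irreducibly on $\F_{81}$.

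For cyclic $\bar H$ your index-$2$ argument does work when $q=3$ and leaves $C_2$ and $C_8$. What is missing is an argument for the maximal-class groups, and it must use how $t\in\bar H\setminus C$ acts on the cyclic maximal subgroup $C=\langle c\rangle$ of order $2^{a-1}$. Write $c^t=c^\varepsilon$. Then $V_1|_C$ is homogeneous (indeed irreducible, by your own observation), so the eigenvalues of $c$ are the Galois conjugates $\lambda^{3^i}$ of one primitive $2^{a-1}$-th root of unity $\lambda$. Since $c$ and $c^\varepsilon$ are conjugate, $\varepsilon\equiv3^i\pmod{2^{a-1}}$ for some $i$. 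Powers of $3$ are $\equiv1$ or $3\pmod8$, whereas $\varepsilon\equiv7\pmod8$ for dihedral and generalised quaternion groups of order at least $16$ and for semidihedral groups of order at least $32$. Only $Q_8$ and $\SD_{16}$ survive, and their faithful irreducible $\F_3$-modules are $2$-dimensional. This is the paper's argument, and your enumeration of subgroups of $\SD_{16}$ only finishes the proof after it.

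The ``main obstacle'' you anticipate, on the other hand, does not exist. The form of P.~Hall's theorem that produces central products $E\circ R$ with $E$ extraspecial classifies $2$-groups in which every \emph{characteristic} abelian subgroup is cyclic. You have already shown that every \emph{normal} abelian subgroup of $\bar H$ is cyclic. Under that stronger hypothesis, the classification the paper uses (Gorenstein, Theorem~5.4.10) says directly that $\bar H$ is cyclic, generalised quaternion, or dihedral or semidihedral of order at least $16$.

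Concretely, suppose $E\ne1$ and $E\circ R\not\cong Q_8$, and let $z$ be the central involution. Then there is a noncentral involution $u$: either one inside $E$, or $u=ex$ with $e\in E$ noncentral of order $4$ and $x$ of order $4$ in the cyclic maximal subgroup of $R$, so that $e^2=x^2=z$. The conjugates of $u$ lie in $\{u,uz\}$, so $\langle u,z\rangle$ is a normal Klein four-group. These cases should therefore be discarded at the outset, not shown to be imprimitive.
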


\begin{proof}
By \cite[\S1]{MW}, $V\cong\Ind_H^G(V_1)$ and $V_1$ is a primitive faithful irreducible $\F_q\bar H$-module; in particular every normal subgroup of $\bar H$ acts homogeneously on $V_1$ (Clifford). A normal abelian subgroup $N$ of $\bar H$ therefore embeds into the multiplicative group of the field $\End_{\F_qN}(X)$, $X$ an irreducible constituent of $V_1|_N$, so $N$ is cyclic. Hence $\bar H$ is cyclic, generalised quaternion of order at least $8$, or dihedral or semidihedral of order at least $16$ \cite[Theorem~5.4.10]{Go}.

Suppose that $\bar H$ is not cyclic, let $C=\langle c\rangle$ be a cyclic subgroup of index $2$, of order $2^{a-1}\ge4$, and let $c^t=c^\varepsilon$ for $t\in\bar H\setminus C$, where $\varepsilon=-1$ or $\varepsilon=2^{a-2}-1$. As $V_1|_C$ is homogeneous and faithful, the eigenvalues of $c$ on $V_1\otimes\overline{\F}_q$ are the conjugates $\lambda^{q^i}$ of one primitive $2^{a-1}$-th root of unity $\lambda$. Since $c$ and $c^\varepsilon$ are conjugate, $\varepsilon\equiv q^i\pmod{2^{a-1}}$ for some $i$. If $q\ge5$, then $q\equiv1\pmod4$ while $\varepsilon\equiv3\pmod4$, a contradiction. If $q=3$, then the powers of $3$ are $\equiv1$ or $3\pmod8$, while $\varepsilon\equiv7\pmod8$ unless $\bar H\cong Q_8$ or $\bar H\cong\SD_{16}$; the faithful irreducible $\F_3$-modules of these two groups have dimension $2$.

Now let $\bar H=\langle c\rangle$ be cyclic of order $2^b$, so that $V_1\cong\F_q(\lambda)$ with $c$ acting as multiplication by a primitive $2^b$-th root of unity $\lambda$. If $\lambda\notin\F_q(\lambda^2)$, then $V_1=\F_q(\lambda^2)\oplus\lambda\F_q(\lambda^2)$ is a decomposition into two subspaces interchanged by $c$, contradicting primitivity. So $\F_q(\lambda)=\F_q(\lambda^2)$. If $q\ge5$, then the multiplicative order of $q$ modulo $2^b$ is $1$ for $b\le n$ and $2^{b-n}$ for $b>n$, so $\F_q(\lambda)=\F_q(\lambda^2)$ forces $b\le n$ and $\dim V_1=1$. If $q=3$, the same reasoning leaves $b=1$ and $\dim V_1=1$, or $b=3$ and $V_1\cong\F_9$.
\end{proof}

\begin{proof}[Proof of Theorem~\ref{thmA}]
Parts (a) and (b) are Proposition~\ref{prop:centraliser}. By (b) and Lemma~\ref{lem:dim}, $\dim_{\F_q}V=2^{m-1}$ for some $m\ge1$, and by Sylow's theorem and Lemma~\ref{lem:sylow}, $G$ is conjugate in $\GL(V)\cong\GL(V_m)$ to a subgroup of $P_m$. This proves (c), and (d) is Proposition~\ref{prop:primitive}.
\end{proof}

\section{The largest orbits: proof of Theorem~\ref{thmB}}\label{sec:B}

Throughout this section $n\ge1$, $k\ge0$ and $P=P(n,k)$. For $X\subseteq\Omega_k$ we call
\[c(X)=n|X|+\sym(X)\]
the \emph{cost} of $X$. By Lemma~\ref{lem:orbits}, $|v^P|=|P|\cdot2^{-c(Z(v))}$, so the largest orbits of $P$ correspond to the $W_k$-classes of subsets of minimal cost. We write $a_k$ for this minimal cost. Since $W_k$ is transitive, all one-element subsets of $\Omega_k$ are conjugate.

\begin{definition}\label{def:Y}
For $j\ge2$ define subsets $Y_j^-$, $Y_j^0$ and $Y_j^+$ of $\Omega_j$ by $Y_2^-=\{0\}$, $Y_2^0=\{0,1\}$, $Y_2^+=\{0,1,3\}$ and
\[Y_{j+1}^-=(Y_j^-,Y_j^0),\qquad Y_{j+1}^0=(Y_j^-,Y_j^+),\qquad Y_{j+1}^+=(Y_j^0,Y_j^+).\]
By induction $|Y_j^\delta|=2^{j-1}+\delta$, where $\delta\in\{-,0,+\}$ is read as $-1,0,1$, and $Y_j^\delta\subseteq\{0,1,\dots,2^j-1\}$. We regard $Y_j^\delta$ also as a subset of $\Omega_k$ for all $k\ge j$.
\end{definition}

Thus $Y_2^-$ is a point, $Y_2^0$ consists of one point in each half of $\Omega_2$, and $Y_2^+$ consists of one point in one half and both points of the other half. Viewed in $\Omega_k$, the set $Y_2^+=\{0,1,3\}$ consists of one point in one half and two points in different quarters of the other half.

\begin{lemma}\label{lem:costrec}
For $k\ge1$ and $X\subseteq\Omega_k$ we have $c(X)=c(X_L)+c(X_R)+\epsilon(X)$.
\end{lemma}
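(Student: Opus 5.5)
This is a direct unwinding of the two definitions involved. By definition of the cost, $c(X)=n|X|+\sym(X)$. By the recursive definition of $\sym$ for $k\ge1$, we have $\sym(X)=\sym(X_L)+\sym(X_R)+\epsilon(X)$. So the plan is to split $|X|$ along the halves of $\Omega_k$ and then add the two identities.

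First, I would check that $|X|=|X_L|+|X_R|$. This holds because $X=(X_L,X_R)$ is the disjoint union of $\{2y:y\in X_L\}$ (the even elements) and $\{2y'+1:y'\in X_R\}$ (the odd elements). The maps $y\mapsto 2y$ and $y'\mapsto 2y'+1$ are injective on $\Omega_{k-1}$, so each piece has the same size as $X_L$, respectively $X_R$.

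Then I would compute
\[
c(X)=n|X_L|+n|X_R|+\sym(X_L)+\sym(X_R)+\epsilon(X)=c(X_L)+c(X_R)+\epsilon(X).
\]
Here $c(X_L)$ and $c(X_R)$ are costs of subsets of $\Omega_{k-1}$, so they are defined with the same $n$.

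I see no real obstacle. The only point needing care is that $X_L$ and $X_R$ are uniquely determined by $X$, so that the recursion for $\sym$ applies verbatim; this follows from the definition $X=(X_L,X_R)$.
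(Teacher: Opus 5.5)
Your proposal is correct and follows the paper's proof: the paper also obtains the identity directly from $|X|=|X_L|+|X_R|$ together with the recursive definition of $\sym$. Your extra checks, that $X$ splits disjointly into its even and odd parts and that $X_L,X_R$ are uniquely determined, are accurate details the paper leaves implicit.
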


\begin{proof}
This is immediate from $|X|=|X_L|+|X_R|$ and the definition of $\sym$.
\end{proof}

\begin{lemma}\label{lem:costlow}
Let $0\le k\le n$. Then $c(\emptyset)=2^k-1$, $c(\{0\})=2^k-1+n-k$, and $c(X)\ge2^k+1+n-k$ for every $X\subseteq\Omega_k$ with $|X|\ge2$.
\end{lemma}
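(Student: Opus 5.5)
We prove the three assertions separately, each by induction on $k$ using the recursion of Lemma~\ref{lem:costrec}, $c(X)=c(X_L)+c(X_R)+\epsilon(X)$.

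For $X=\emptyset$ we have $X_L=X_R=\emptyset$, which are conjugate, so $\epsilon(\emptyset)=1$. Hence $c(\emptyset)=2c(\emptyset_{k-1})+1$, which gives $c(\emptyset)=2^k-1$. This simply reflects that $\Stab_{W_k}(\emptyset)=W_k$ has order $2^{2^k-1}$ by Lemma~\ref{lem:tree}(a).

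For $X=\{0\}$ we have $X_L=\{0\}\subseteq\Omega_{k-1}$ and $X_R=\emptyset$. These are not conjugate, so $\epsilon=0$, and
\[c(\{0\})=c(\{0\}_{k-1})+2^{k-1}-1.\]
Starting from $c(\{0\})=n$ at $k=0$, induction gives $n+\sum_{i=1}^{k}(2^{i-1}-1)=n+2^k-1-k$, as claimed. Note that this formula holds for all $k$; the hypothesis $k\le n$ is not needed here.

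For $|X|\ge2$, the cleanest route is to bound $\sym$ from below directly rather than through the recursion. We use $\sym(X)\ge0$, so that $c(X)\ge n|X|\ge 2n$. We then need $2n\ge2^k+1+n-k$, that is, $n+k-1\ge2^k$, and this fails for $k$ near $n$. So the crude bound is not enough, and we argue by induction on $k$ via the recursion, proving the statement for all $k\le n$. We use a general auxiliary bound, also proved by induction: $c(Y)\ge c(\{0\})$ for every nonempty $Y\subseteq\Omega_j$ with $j\le n$. Its proof is the same case analysis as below.

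At level $k\ge1$, let $|X|\ge2$ and split into two cases.

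\emph{Case 1: both halves nonempty.} Then $c(X)\ge c(X_L)+c(X_R)\ge 2(2^{k-1}-1+n-k+1)=2^k+2n-2k$. This is at least $2^k+1+n-k$ if and only if $n\ge k+1$. In the boundary case $k=n$ we need one more unit. If $|X_L|\ge2$ or $|X_R|\ge2$, the inductive bound for sets of size at least $2$ gives an extra $+2$. If $X_L$ and $X_R$ are both singletons, they are conjugate, so $\epsilon=1$, which supplies the missing $+1$.

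\emph{Case 2: one half empty, say $X_R=\emptyset$, so $|X_L|\ge2$.} By induction,
\[c(X)\ge(2^{k-1}+1+n-k+1)+(2^{k-1}-1)=2^k+1+n-k.\]
If $k-1=0$, then $|X_L|\le1$ and this case cannot occur.

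The base case $k=1$ is $X=\Omega_1$, for which $c=2n+1\ge n+2$. The main obstacle is the equality bookkeeping in Case 1 when $k=n$. It is resolved by the dichotomy above: either some half has at least $2$ points, or both halves are conjugate singletons and $\epsilon=1$.
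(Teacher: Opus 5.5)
Your proof is correct and follows essentially the paper's route: induction on $k$ through $c(X)=c(X_L)+c(X_R)+\epsilon(X)$, using the inductive bound for halves with at least two points, and $\epsilon(X)=1$ to supply the needed unit when both halves are singletons. The paper splits the case $|X|\ge2$ more economically, into ``both halves are points'' versus ``some half has at least two points'' with the other half bounded below by $c(\emptyset)=2^{k-1}-1$, which avoids your separate $k<n$ and $k=n$ subcases; your auxiliary bound $c(Y)\ge c(\{0\})$ needs no separate induction, since it is immediate from the statement at level $k-1$, and the abandoned crude-bound paragraph can be dropped.
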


\begin{proof}
For $k=0$ this is clear. Let $1\le k\le n$ and assume the assertion for $k-1$. By Lemma~\ref{lem:costrec}, $c(\emptyset)=2(2^{k-1}-1)+1=2^k-1$ and, for a point, $c=(2^{k-1}-1)+(2^{k-1}+n-k)=2^k-1+n-k$. Let $|X|\ge2$. If $X_L$ and $X_R$ are points, then $c(X)=2(2^{k-1}+n-k)+1\ge2^k+1+n-k$. Otherwise one of $X_L,X_R$ has at least two elements, and $c(X)\ge(2^{k-1}-1)+(2^{k-1}+2+n-k)=2^k+1+n-k$.
\end{proof}

\begin{lemma}\label{lem:costhigh}
\begin{enumerate}[(a)]
\item Let $k=n+1$. Then $c(\{0\})=2^{n+1}-2$, $c(\emptyset)=c(\{0,1\})=2^{n+1}-1$, and $c(X)\ge2^{n+1}$ for every $X$ that is not conjugate to $\emptyset$, $\{0\}$ or $\{0,1\}$.
\item Let $k=n+2$. Then $c(Y_2^\delta)=2^{n+2}-3$ for $\delta\in\{-,0,+\}$, and $c(X)\ge2^{n+2}-2$ for every $X$ that is not conjugate to one of these three sets.
\item Let $k\ge n+2$ and $j=k-n$. Then $a_k=2^{j-2}(2^{n+2}-3)$, and the subsets of cost $a_k$ are exactly the conjugates of $Y_j^-$, $Y_j^0$ and $Y_j^+$, which are pairwise non-conjugate.
\end{enumerate}
\end{lemma}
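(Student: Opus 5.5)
The plan is to argue throughout with the recursion $c(X)=c(X_L)+c(X_R)+\epsilon(X)$ of Lemma~\ref{lem:costrec}. Conjugacy questions will be settled by Lemma~\ref{lem:subsets}: two subsets are $W_k$-conjugate exactly when their pairs of halves agree up to conjugacy and up to swapping the halves by $\tau$. At each level I will list the few cheapest classes of $\Omega_{k-1}$ together with a lower bound for all remaining subsets. I will then combine two halves and compare sums.

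\textbf{Part (a).} Lemma~\ref{lem:costlow} with $k=n$ gives the following at level $n$: $c(\emptyset)=2^n-1$, a point also costs $2^n-1$, and every subset with at least two elements costs at least $2^n+1$. For $k=n+1$ I will write $X=(X_L,X_R)$. Then $\{0\}=(\{0\},\emptyset)$ has non-conjugate halves, so $\epsilon=0$ and $c(\{0\})=2^{n+1}-2$. Both $\emptyset=(\emptyset,\emptyset)$ and $\{0,1\}=(\{0\},\{0\})$ have conjugate halves, so each costs $2(2^n-1)+1=2^{n+1}-1$. Any other $X$ has a half with at least two elements, so $c(X)\ge(2^n-1)+(2^n+1)=2^{n+1}$. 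By Lemma~\ref{lem:subsets}, every $X$ whose halves are both of size at most $1$ is conjugate to one of the three listed sets.

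\textbf{Part (b).} Put $\mu=2^{n+1}-2$. By (a), at level $n+1$ the point costs $\mu$, the sets $\emptyset$ and $\{0,1\}$ cost $\mu+1$, and every other set costs at least $\mu+2$. For $k=n+2$ the pairs of halves of total cost $2\mu+1=2^{n+2}-3$ are found as follows.
\begin{itemize}
\item (point, point): here $\epsilon=1$, which gives $Y_2^0=(\{0\},\{0\})$.
\item (point, $\emptyset$): this gives $Y_2^-=(\{0\},\emptyset)$.
\item (point, $\{0,1\}$): note $Y_2^+=\{0,1,3\}=(\{0\},\{0,1\})$.
\end{itemize}
The last two pairs have non-conjugate halves, so $\epsilon=0$. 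Every other pair either involves a half of cost at least $\mu+2$, or pairs two sets of cost $\mu+1$. In both cases the cost is at least $2\mu+2$, noting that $(\emptyset,\emptyset)$ and $(\{0,1\},\{0,1\})$ also pick up $\epsilon=1$. The three sets $Y_2^\delta$ are pairwise non-conjugate by Lemma~\ref{lem:subsets}, since their unordered pairs of half-classes differ.

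\textbf{Part (c).} I will prove by induction on $j\ge2$ the following statement. The minimal cost is $a_k$. The minimal sets form exactly the three classes of $Y_j^-$, $Y_j^0$, $Y_j^+$. Every other subset costs at least $a_k+1$; this holds automatically once minimality is shown, because costs are integers. The base case $j=2$ is (b).

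For the inductive step I will use $X=(X_L,X_R)$ at level $k+1$. If some half is not of minimal cost, then $c(X)\ge 2a_k+1$. If both halves are minimal and conjugate, then $\epsilon=1$ and again $c(X)=2a_k+1$. If both halves are minimal but lie in distinct classes, then $c(X)=2a_k$. Hence $a_{k+1}=2a_k$, which matches $2^{j-1}(2^{n+2}-3)$.

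The minimal sets at level $k+1$ therefore correspond to unordered pairs of distinct classes among $\{-,0,+\}$. These are exactly $(Y_j^-,Y_j^0)=Y_{j+1}^-$, $(Y_j^-,Y_j^+)=Y_{j+1}^0$ and $(Y_j^0,Y_j^+)=Y_{j+1}^+$, up to $\tau$. By Lemma~\ref{lem:subsets}, these three sets are pairwise non-conjugate.

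The only delicate point is the conjugacy bookkeeping: the sets $Y_j^\delta$ must be viewed inside $\Omega_k$ with $k=n+j$, and their halves inside $\Omega_{k-1}$. This causes no difficulty, because Definition~\ref{def:Y} is built with exactly the same halving operation $(Y,Y')$. Beyond that, the argument is a routine comparison of sums.
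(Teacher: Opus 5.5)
Your proposal is correct and follows essentially the same route as the paper. Both arguments feed Lemma~\ref{lem:costlow} at level $n$ into (a) and (a) into (b) via the recursion $c(X)=c(X_L)+c(X_R)+\epsilon(X)$ and Lemma~\ref{lem:subsets}, then run the induction in (c) from $c(X)\ge 2a_k$, with equality exactly when both halves are minimal and non-conjugate. Your case analysis is just more explicit than the paper's, for instance in deriving non-conjugacy of the $Y_2^\delta$ from their half-classes rather than from their sizes.
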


\begin{proof}
(a) By Lemma~\ref{lem:costlow} for $k=n$, the empty set and the points of $\Omega_n$ have cost $2^n-1$, and all other subsets have cost at least $2^n+1$. By Lemma~\ref{lem:costrec}, $X=(X_L,X_R)$ has cost $2^{n+1}-2$ if one of $X_L,X_R$ is empty and the other a point, cost $2^{n+1}-1$ if both are empty or both are points, and cost at least $2^{n+1}$ otherwise. By Lemma~\ref{lem:subsets} these cases correspond to $X$ conjugate to $\{0\}$, to $\emptyset$ or $\{0,1\}=(\{0\},\{0\})$, and to the remaining sets.

(b) Apply Lemma~\ref{lem:costrec} with the values in (a). If both halves are conjugate to $\{0\}$, the cost is $2(2^{n+1}-2)+1=2^{n+2}-3$; if one half is conjugate to $\{0\}$ and the other to $\emptyset$ or $\{0,1\}$, the cost is $(2^{n+1}-2)+(2^{n+1}-1)=2^{n+2}-3$. These three cases give the conjugates of $(\{0\},\{0\})=Y_2^0$, $(\{0\},\emptyset)=Y_2^-$ and $(\{0\},\{0,1\})=Y_2^+$. In all other cases the cost is at least $2^{n+2}-2$.

(c) For $j=2$ this is (b), since the three sets have different sizes. Let $j\ge3$ and assume (c) for $k-1$. By Lemma~\ref{lem:costrec}, $c(X)\ge c(X_L)+c(X_R)\ge2a_{k-1}$, with equality if and only if $X_L$ and $X_R$ have cost $a_{k-1}$ and are not conjugate. By induction and Lemma~\ref{lem:subsets}, this means that $X$ is conjugate to $(Y_{j-1}^\alpha,Y_{j-1}^\beta)$ for some $(\alpha,\beta)\in\{(-,0),(-,+),(0,+)\}$, that is, to $Y_j^-$, $Y_j^0$ or $Y_j^+$. These have different sizes, and $a_k=2a_{k-1}$.
\end{proof}

\begin{theorem}\label{thm:largest}
Let $n\ge1$ and $k\ge0$. Then
\[a_k=\begin{cases}2^k-1,& k\le n,\\ 2^{n+1}-2,& k=n+1,\\ 2^{k-n-2}(2^{n+2}-3),& k\ge n+2,\end{cases}\]
and the largest orbit size of $P(n,k)$ on $V$ is $2^x$ with $x=\log_2|P(n,k)|-a_k=n2^k+2^k-1-a_k$. Let $\cO_\emptyset$ be the set of vectors without zero coordinates and $\cO_1$ the set of vectors with exactly one zero coordinate. The largest orbits are $\cO_\emptyset$ if $k<n$; $\cO_\emptyset$ and $\cO_1$ if $k=n$; $\cO_1$ if $k=n+1$; and, if $k\ge n+2$, the three orbits $\cO^\delta=\cO^\delta_k$ of the vectors $v_{Y^\delta_{k-n}}$, $\delta\in\{-,0,+\}$, whose elements have exactly $2^{k-n-1}+\delta$ zero coordinates.
\end{theorem}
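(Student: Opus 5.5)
The plan is to assemble the theorem from the cost lemmas already proved. By Lemma~\ref{lem:orbits}(c) every orbit has size $|P(n,k)|\cdot2^{-c(Z(v))}$. Hence the largest orbit size is $2^x$ with $x=\log_2|P(n,k)|-a_k$. Counting $|H|=2^{n2^k}$ and $|W_k|=2^{2^k-1}$ (Lemma~\ref{lem:tree}(a)) gives $\log_2|P(n,k)|=n2^k+2^k-1$. By Lemma~\ref{lem:orbits}(a) the largest orbits correspond bijectively to the $W_k$-classes of subsets $X$ with $c(X)=a_k$, each orbit being the set of vectors whose zero set is conjugate to $X$. So it remains to determine $a_k$ and the minimising classes in each range of $k$.

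For $k\le n$ I will use Lemma~\ref{lem:costlow}. We have $c(\emptyset)=2^k-1$, a point has cost $2^k-1+n-k$, and every $X$ with $|X|\ge2$ has cost at least $2^k+1+n-k>2^k-1$. Thus $a_k=2^k-1$. If $k<n$, the empty set is the unique minimiser, giving the orbit $\cO_\emptyset$. If $k=n$, both $\emptyset$ and the points attain the minimum, giving $\cO_\emptyset$ and $\cO_1$. These two orbits are distinct, and all points form one class since $W_k$ is transitive.

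For $k=n+1$, Lemma~\ref{lem:costhigh}(a) shows that points are the unique minimisers, with cost $2^{n+1}-2$. This gives $a_{n+1}=2^{n+1}-2$ and the single largest orbit $\cO_1$.

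For $k\ge n+2$, Lemma~\ref{lem:costhigh}(c) with $j=k-n$ gives $a_k=2^{k-n-2}(2^{n+2}-3)$. The minimisers are exactly the conjugates of $Y_j^-$, $Y_j^0$ and $Y_j^+$, and these three classes are pairwise non-conjugate. So there are exactly three largest orbits $\cO^\delta$, namely the orbits of $v_{Y^\delta_{k-n}}$. Their elements have $|Y_j^\delta|=2^{j-1}+\delta$ zero coordinates by Definition~\ref{def:Y}, because conjugation preserves size.

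There is no real obstacle here, since the substantive work is in Lemmas~\ref{lem:costlow} and~\ref{lem:costhigh}. The only points needing care are bookkeeping ones. First, in the case $k=n$ one must observe that $\emptyset$ and $\{0\}$ are not conjugate, which is clear because they have different sizes. Second, $v_X$ has zero set exactly $X$ by its definition, so the orbit of $v_{Y^\delta}$ is indeed the orbit attached to the class of $Y^\delta$.
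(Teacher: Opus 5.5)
Your proposal is correct and follows the paper's own proof, which likewise obtains the theorem by combining Lemma~\ref{lem:orbits} with the cost computations of Lemmas~\ref{lem:costlow} and~\ref{lem:costhigh}. Your additional bookkeeping makes explicit what the paper leaves implicit: the value $\log_2|P(n,k)|=n2^k+2^k-1$, the fact that $v_X$ has zero set exactly $X$, and the use of set sizes to separate the minimising classes.
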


\begin{proof}
This follows from Lemmas~\ref{lem:orbits}, \ref{lem:costlow} and~\ref{lem:costhigh}.
\end{proof}

Note that $\cO^-_{n+2}=\cO_1$, since $Y_2^-=\{0\}$.

\begin{proof}[Proof of Theorem~\ref{thmB}]
By Lemma~\ref{lem:sylow}, the orbits of $P_m$ on $V_m$ are those of $P(n,k)$. Evaluating $x$ in Theorem~\ref{thm:largest} gives the formula for $x_m$, and the number of largest orbits can be read off. If $k\ge n+2$, then $x_{m+1}=2x_m+1$ since $a_{k+1}=2a_k$.
\end{proof}

\section{Existence: proof of Theorem~\ref{thmC}}\label{sec:C}

In this section $n\ge2$ and $\K$ is a field of order $2^n+1$; this covers all cases of interest, namely $\K=\F_q$ for a Fermat prime $q\ge5$, and $\K=\F_9$ with $n=3$. Let $x_k$ denote the exponent of the largest orbit size of $P(n,k)$. Let $\zeta_0\in H$ be the diagonal matrix with entry $\zeta$ at $0$ and $1$ elsewhere, let $g_k$ also denote the permutation matrix of the $2^k$-cycle $g_k$, and let $M_k=\{d\in H:\det d=1\}$. As $\det$ is invariant under conjugation by $W_k$ and $H$ is abelian, every subgroup of $H$ defined by a condition on $\det$ is normal in $P(n,k)$.

\subsection{Small levels}

\begin{proposition}\label{prop:low}
Let $0\le k<n$, put $N_k=\{d\in H:\det d\in\langle\zeta^{2^k}\rangle\}$ and $T=\langle N_k,\zeta_0g_k\rangle$. Then $|T|=|H|=2^{x_k}$, and $T$ is an irreducible subgroup of $P(n,k)$ whose unique regular orbit is $\cO_\emptyset$.
\end{proposition}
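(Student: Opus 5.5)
We first compute $|T|$, then check the regular-orbit condition via Lemma~\ref{lem:criterion}, and finally check irreducibility via Lemma~\ref{lem:irr}.

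\emph{Order of $T$.} For $k<n$, Theorem~\ref{thm:largest} gives $a_k=2^k-1$, so $x_k=n2^k=\log_2|H|$. The subgroup $N_k$ is the preimage under $\det\colon H\to A$ of $\langle\zeta^{2^k}\rangle$, which has index $2^k$ in $A$. Hence $|N_k|=|H|/2^k$, and $N_k$ is normal in $P(n,k)$.

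Put $t=\zeta_0g_k$. Then $\det$ of the diagonal part of $t^j$ is $\zeta^j$. Indeed, $t^j=\zeta_0\zeta_0^{g_k^{-1}}\cdots g_k^j$ is a product of $j$ conjugates of $\zeta_0$, each of determinant $\zeta$. In particular $t^{2^k}\in H$ with determinant $\zeta^{2^k}$, so $t^{2^k}\in N_k$. Since $t$ normalises $N_k$, we get $T=N_k\langle t\rangle$ with $T/N_k$ cyclic. For $0<j<2^k$, the power $t^j$ has image $g_k^j\ne1$ in $W_k$, so $t^j\notin H$. Therefore $T\cap H=N_k$ and $|T|=2^k|N_k|=|H|=2^{x_k}$.

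\emph{Regular orbit.} By Lemma~\ref{lem:criterion}, it suffices to show that $T$ meets $C_P(v)$ trivially for $v=v_\emptyset=\sum_\omega e_\omega$, and meets it nontrivially for the other largest orbits. For $k<n$ there are no other largest orbits, so only the first claim is needed.

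By Lemma~\ref{lem:orbits}(b), $C_P(v_\emptyset)=\Stab_{W_k}(\emptyset)=W_k$, the group of permutation matrices. An element of $T$ has the form $dt^j$ with $d\in N_k$ and $0\le j<2^k$. It is a permutation matrix exactly when its diagonal part $d\cdot(\text{diagonal part of }t^j)$ is the identity. Taking determinants gives $\det d\cdot\zeta^j=1$, so $\zeta^{-j}\in\langle\zeta^{2^k}\rangle$. This forces $2^k\mid j$, hence $j=0$, and then $d=1$. So $T\cap C_P(v_\emptyset)=1$, and $\cO_\emptyset$ is the unique regular orbit of $T$.

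\emph{Irreducibility.} We apply Lemma~\ref{lem:irr} with $M=T\cap H=N_k$.
\begin{enumerate}[(i)]
\item The image of $T$ in $W_k$ contains $g_k$, which is transitive on $\Omega_k$.
\item If $k\ge1$, the elements $d^{\omega,\omega''}$ have determinant $1$, so they lie in $N_k$, and their entry $\zeta$ at $\omega$ generates $\K$ over $\F_p$. If $k=0$, then $N_0=H$ and the same conclusion holds.
\item The elements $d^{\omega,\omega''}$ with $\omega''\ne\omega,\omega'$ satisfy $d_\omega\ne1=d_{\omega'}$. This needs $2^k\ge3$. For $k=1$, take instead $\diag(\zeta^2,1)$, which lies in $N_1$ since its determinant $\zeta^2$ is in $\langle\zeta^2\rangle$, and symmetrically $\diag(1,\zeta^2)$. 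Here $\zeta^2\ne1$ because $n\ge2$. For $k=0$ condition (iii) is vacuous.
\end{enumerate}
The main care is needed in the determinant bookkeeping for $T\cap H$ and in this small-$k$ case of (iii).
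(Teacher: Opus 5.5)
Your proof is correct and follows the paper's argument essentially step for step. You get $T\cap H=N_k$ and $|T|=2^k|N_k|=|H|$ by tracking $\det$ of the diagonal part of $(\zeta_0g_k)^j$; the same determinant argument gives $T\cap W_k=1$, which you combine with Lemma~\ref{lem:criterion}; and irreducibility comes from Lemma~\ref{lem:irr}. The only difference is cosmetic: for $k=1$ you check condition (ii) with $d^{\omega,\omega''}$ (entry $\zeta$) and use $\diag(\zeta^2,1)$ only for (iii), which spares you the paper's remark that $\zeta^2$ still generates $\F_9$ over $\F_3$.
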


\begin{proof}
As $N_k$ is normal, $T=N_k\langle\zeta_0g_k\rangle$. An element $d(\zeta_0g_k)^i$ with $d\in N_k$ lies in $H$ if and only if $2^k$ divides $i$, and $(\zeta_0g_k)^{2^k}$ is the scalar matrix $\zeta$, whose determinant $\zeta^{2^k}$ lies in $\langle\zeta^{2^k}\rangle$. Hence $T\cap H=N_k$ and $|T|=2^k|N_k|=|H|$, because $\det\colon H\to A$ is surjective and $|A:\langle\zeta^{2^k}\rangle|=2^k$. By Theorem~\ref{thm:largest}, $|H|=2^{x_k}$ and $\cO_\emptyset$ is the only largest orbit.

Let $t=d(\zeta_0g_k)^i\in T\cap C(v_\emptyset)=T\cap W_k$ with $d\in N_k$ and $0\le i<2^k$. The diagonal part of $(\zeta_0g_k)^i$ has determinant $\zeta^i$, so $\det(d)\zeta^i=1$ and $\zeta^i\in\langle\zeta^{2^k}\rangle$. As $\zeta$ has order $2^n$, this forces $2^k\mid i$, so $i=0$ and $t=d\in H\cap W_k=1$. By Lemma~\ref{lem:criterion}, $\cO_\emptyset$ is the unique regular orbit of $T$.

For irreducibility we use Lemma~\ref{lem:irr}; condition (i) holds as $g_k$ is a $2^k$-cycle. If $k=0$, then $T=A$ acts irreducibly on $V=\K$. If $k=1$, then $\diag(\zeta^2,1),\diag(1,\zeta^2)\in N_1$, and $\zeta^2\ne1$ generates $\K$ over $\F_p$ (if $\K=\F_9$, then $\zeta^2$ has order $4$). If $k\ge2$ and $\omega\ne\omega'$, choose $\omega''\notin\{\omega,\omega'\}$; then $d^{\omega,\omega''}\in M_k\le N_k$ has the required properties.
\end{proof}

\begin{proposition}\label{prop:mid}
Let $n\le k\le n+2$ and $\Gamma_k=M_k\rtimes\langle g_k\rangle$. Then $|\Gamma_k|=2^{x_k}$, and $\Gamma_k$ is an irreducible subgroup of $P(n,k)$ whose unique regular orbit is $\cO_1$, the set of vectors with exactly one zero coordinate.
\end{proposition}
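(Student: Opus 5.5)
The plan is to check three things separately: the order of $\Gamma_k$, its regular orbits, and its irreducibility. For the regular orbits we apply Lemma~\ref{lem:criterion} together with the explicit list of largest orbits in Theorem~\ref{thm:largest}.

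\emph{Order.} Since $M_k$ is normal in $P(n,k)$ and $\langle g_k\rangle\cap H=1$, we get $|\Gamma_k|=|M_k|\cdot2^k$. The map $\det\colon H\to A$ is surjective, so $|M_k|=|H|/2^n=2^{n2^k-n}$, and hence $\log_2|\Gamma_k|=n2^k-n+k$. We compare this with $x_k=n2^k+2^k-1-a_k$, using the values of $a_k$ from Theorem~\ref{thm:largest}:
\begin{itemize}
\item for $k=n$, $a_n=2^n-1$ gives $x_n=n2^n$;
\item for $k=n+1$, $a_{n+1}=2^{n+1}-2$ gives $x_{n+1}=n2^{n+1}+1$;
\item for $k=n+2$, $a_{n+2}=2^{n+2}-3$ gives $x_{n+2}=n2^{n+2}+2$.
\end{itemize}
In each case this equals $n2^k-n+k$, so $|\Gamma_k|=2^{x_k}$.

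\emph{$\cO_1$ is a regular orbit.} By Theorem~\ref{thm:largest}, $\cO_1$ is a largest orbit for $k=n$ and $k=n+1$. For $k=n+2$ it is also a largest orbit, because $\cO_1=\cO^-_{n+2}$. By Lemma~\ref{lem:orbits}(b), $C_P(v_{\{0\}})=A^{\{0\}}\rtimes\Stab_{W_k}(\{0\})$. Every element of $\Gamma_k$ can be written uniquely as $dg_k^i$ with $d\in M_k$ and $0\le i<2^k$. If such an element centralises $v_{\{0\}}$, then $g_k^i$ fixes $0$, which forces $i=0$. Then $d\in M_k\cap A^{\{0\}}$, so $d$ has a single nontrivial entry, at $0$, and determinant $1$; hence $d=1$. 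Lemma~\ref{lem:criterion} now shows that $\cO_1$ is a regular orbit of $\Gamma_k$.

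\emph{No other regular orbit.} By Lemma~\ref{lem:criterion}, any other regular orbit of $\Gamma_k$ would be a largest orbit of $P$. We rule these out case by case.
\begin{itemize}
\item For $k=n$ the only other largest orbit is $\cO_\emptyset$. It is not regular, because $g_k\in\Gamma_k$ is a nontrivial element fixing $v_\emptyset$.
\item For $k=n+1$ there is no other largest orbit.
\item For $k=n+2$ the other largest orbits are $\cO^0$ and $\cO^+$, with zero sets $Y_2^0$ and $Y_2^+$ of sizes $2$ and $3$. We apply Lemma~\ref{lem:count} with $M=M_k$. The condition $|M_k|\,|A|^{|X|}>|H|$ becomes $2^{n(|X|-1)}>1$, which holds whenever $|X|\ge2$. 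So neither of these orbits is regular.
\end{itemize}

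\emph{Irreducibility.} We verify the hypotheses of Lemma~\ref{lem:irr}.
\begin{itemize}
\item Condition (i) holds because $g_k$ is a $2^k$-cycle.
\item Since $k\ge n\ge2$, $\Omega_k$ has at least $4$ points. So for distinct $\omega,\omega'$ we can choose $\omega''\notin\{\omega,\omega'\}$. The element $d^{\omega,\omega''}$ lies in $M_k$, has entry $\zeta$ at $\omega$ and entry $1$ at $\omega'$. This gives (iii).
\item The same element gives (ii), because $\F_p(\zeta)=\K$.
\end{itemize}

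Every step is routine. The only place needing care is excluding the other largest orbits. For $k=n$, Lemma~\ref{lem:count} does not apply, since $|X|=0$ for $\cO_\emptyset$, so one uses the explicit fixed element $g_k$ instead.
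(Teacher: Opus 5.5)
Your proposal is correct and follows the paper's proof essentially step for step. It uses the same order count $2^k|M_k|=2^{n2^k+k-n}$ checked against $x_k$, the same stabiliser computation for $v_{\{0\}}$, and the same case split: $g_n$ fixes $v_\emptyset$ when $k=n$, and Lemma~\ref{lem:count} handles $|X|\in\{2,3\}$ when $k=n+2$. Irreducibility is obtained, as in the paper, from Lemma~\ref{lem:irr} using the elements $d^{\omega,\omega''}\in M_k$.
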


\begin{proof}
We have $|\Gamma_k|=2^k|M_k|=2^{k-n}|H|=2^{n2^k+k-n}$, and $n2^k+k-n=x_k$ for $n\le k\le n+2$ by Theorem~\ref{thm:largest}. If $dg_k^i\in\Gamma_k\cap C(v_{\{0\}})=A^{\{0\}}\rtimes\Stab_{W_k}(\{0\})$, then $g_k^i$ fixes $0$, so $i=0$ and $d\in M_k\cap A^{\{0\}}=1$. Hence $\cO_1$ is a regular orbit of $\Gamma_k$. If $k=n$, the other largest orbit $\cO_\emptyset$ is not regular, because $1\ne g_n\in\Gamma_n\cap C(v_\emptyset)$. If $k=n+1$, then $\cO_1$ is the only largest orbit. If $k=n+2$, then $\cO_1=\cO^-$, and the other two largest orbits are the orbits of $v_X$ with $|X|\in\{2,3\}$; since $|M_k|\,|A|^{|X|}>|H|$, they are not regular by Lemma~\ref{lem:count}. Irreducibility follows from Lemma~\ref{lem:irr} with the elements $d^{\omega,\omega''}\in M_k$, as $|\Omega_k|\ge4$.
\end{proof}

\subsection{A Singer-type group and a reduction}

Let $\Sigma=\langle M_n,\zeta_0g_n\rangle\le P(n,n)$. The element $\zeta_0g_n$ has order $2^{2n}$, and its $2^n$-th power is the scalar matrix $\zeta$; the letter $\Sigma$ is meant to recall a Singer cycle.

\begin{lemma}\label{lem:Sigma}
We have $\Sigma\cap H=M_n$ and $|\Sigma|=|H|$, and $\Sigma$ is regular on both largest orbits $\cO_\emptyset$ and $\cO_1$ of $P(n,n)$.
\end{lemma}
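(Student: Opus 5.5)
The plan is to follow the proof of Proposition~\ref{prop:low}, taking $k=n$ and replacing $N_k$ by $M_n$. Put $s=\zeta_0g_n$.

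First we compute powers of $s$. For $0\le i<2^n$ the element $s^i$ has underlying permutation $g_n^i$, which is the identity only when $i=0$. Its diagonal part has determinant $\zeta^i$, since each factor $\zeta_0$ contributes $\zeta$ to the determinant and conjugating by $g_n$ does not change determinants. Moreover $s^{2^n}$ is the scalar matrix $\zeta$, as noted above; its determinant is $\zeta^{2^n}=1$, so $s^{2^n}\in M_n$. Because $M_n$ is normal in $P(n,n)$, we get $\Sigma=M_n\langle s\rangle$. An element $ds^i$ with $d\in M_n$ lies in $H$ exactly when $2^n\mid i$, and in that case it already lies in $M_n$. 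Hence $\Sigma\cap H=M_n$ and $|\Sigma|=2^n|M_n|=|H|$, because $\det\colon H\to A$ is surjective onto a group of order $2^n$. By Theorem~\ref{thm:largest}, $x_n=n2^n$, so $|\Sigma|=2^{x_n}$. Lemma~\ref{lem:criterion} therefore reduces regularity on a largest orbit $\cO$ to showing $\Sigma\cap C_P(v)=1$ for a single $v\in\cO$.

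For $\cO_\emptyset$ we take $v=v_\emptyset$, whose stabiliser is $C_P(v_\emptyset)=W_n$ by Lemma~\ref{lem:orbits}(b). Let $t=ds^i\in\Sigma\cap W_n$ with $d\in M_n$ and $0\le i<2^n$. The diagonal part of $t$ must be trivial, so $1=\det(d)\zeta^i=\zeta^i$. This forces $i=0$, and then $t=d\in H\cap W_n=1$.

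For $\cO_1$ we take $v=v_{\{0\}}$, whose stabiliser is $A^{\{0\}}\rtimes\Stab_{W_n}(\{0\})$. Let $t=ds^i$ be in this stabiliser. Its permutation $g_n^i$ must fix $0$, so $i=0$ because $g_n$ is a $2^n$-cycle. Then $t=d\in M_n\cap A^{\{0\}}$, which is trivial: an element of $A^{\{0\}}$ has a single possibly nontrivial entry, and that entry equals its determinant, which is $1$.

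The only step needing care is the determinant bookkeeping for $s^i$: we need that the diagonal part of $s^i$ has determinant exactly $\zeta^i$, and that $s^{2^n}$ is the scalar $\zeta$ and hence lies in $M_n$. Both follow by direct multiplication. Everything else is the same argument as in Proposition~\ref{prop:low}.
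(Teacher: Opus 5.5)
Your proof is correct and follows the paper's argument essentially line by line. You get $\Sigma\cap H=M_n$ and $|\Sigma|=|H|$ exactly as in Proposition~\ref{prop:low} with $k=n$; you then show $\Sigma\cap C(v_\emptyset)=1$ via the determinant $\zeta^i$, and $\Sigma\cap C(v_{\{0\}})=1$ via $g_n^i$ fixing $0$ and $M_n\cap A^{\{0\}}=1$, with your extra bookkeeping (that $s^{2^n}$ is the scalar $\zeta$ of determinant $1$, and the appeal to Lemma~\ref{lem:criterion}) only making explicit what the paper leaves implicit.
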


\begin{proof}
As in the proof of Proposition~\ref{prop:low} (with $k=n$), $\Sigma\cap H=M_n$ and $|\Sigma|=2^n|M_n|=|H|$, which is the size of $\cO_\emptyset$ and of $\cO_1$ by Theorem~\ref{thm:largest}. If $d(\zeta_0g_n)^i\in W_n$ with $d\in M_n$ and $0\le i<2^n$, then $\zeta^i=1$, so $i=0$ and the element is trivial; thus $\Sigma\cap C(v_\emptyset)=1$. If $d(\zeta_0g_n)^i$ fixes $v_{\{0\}}$, then $g_n^i$ fixes $0$, so $i=0$ and $d\in M_n\cap A^{\{0\}}=1$.
\end{proof}

Now let $j\ge2$ and $k=n+j$, and let $\Omega_k=\bigcup_{\beta\in\Omega_j}B_\beta$ with $B_\beta=\beta+2^j\Omega_n$, as in Lemma~\ref{lem:tree}(c) with $a=n$. For $v\in V=\K^{\Omega_k}$ and $\beta\in\Omega_j$ let $v_\beta\in\K^{\Omega_n}$ be given by $(v_\beta)_\ell=v_{\beta+2^j\ell}$. For $\sigma\in P(n,n)$ let $\sigma_{(\beta)}\in P(n,k)$ act on the coordinates in $B_\beta$ as $\sigma$ acts on $\K^{\Omega_n}$ (via $\ell\mapsto\beta+2^j\ell$) and trivially on the other coordinates; and for $r\in W_j$ let $\hat r\in W_k$ be as in Lemma~\ref{lem:tree}(c). Then $(v\sigma_{(\beta)})_\beta=v_\beta\sigma$ and $(v\hat r)_{\beta^r}=v_\beta$. For $R\le W_j$ put
\[T(R)=\Bigl(\prod_{\beta\in\Omega_j}\Sigma_{(\beta)}\Bigr)\rtimes\hat R,\qquad\text{where }\Sigma_{(\beta)}=\{\sigma_{(\beta)}:\sigma\in\Sigma\},\ \hat R=\{\hat r:r\in R\}.\]

\begin{proposition}\label{prop:reduction}
Let $j\ge2$, $k=n+j$ and $R\le W_j$.
\begin{enumerate}[(a)]
\item $T(R)$ is a subgroup of $P(n,k)$ of order $|H|\cdot|R|$, and $|T(R)|=2^{x_k}$ if and only if $|R|=2^{3\cdot2^{j-2}-1}$.
\item Let $\delta\in\{-,0,+\}$ and $v=v_{Y_j^\delta}$. Then $T(R)\cap C_{P(n,k)}(v)$ is isomorphic to $\Stab_R(Y_j^\delta)$.
\item If $|R|=2^{3\cdot2^{j-2}-1}$, then the regular orbits of $T(R)$ are exactly the orbits $\cO^\delta_k$ with $\Stab_R(Y^\delta_j)=1$.
\item If $R$ is transitive on $\Omega_j$, then $T(R)$ is irreducible; otherwise it is reducible.
\end{enumerate}
\end{proposition}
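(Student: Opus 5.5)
The plan is to reduce everything to the block structure of Lemma~\ref{lem:tree}(c) with $a=n$, treating $P(n,k)$ as $P(n,n)\wr W_j$ along the blocks $B_\beta$.

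\textbf{(a)} The maps $\sigma\mapsto\sigma_{(\beta)}$ are injective homomorphisms $P(n,n)\to P(n,k)$, because $u_{(\beta)}\in W_k$ by Lemma~\ref{lem:tree}(c). For distinct $\beta$ their images commute, since they act on disjoint coordinate sets. The relation $\hat r^{-1}\sigma_{(\beta)}\hat r=\sigma_{(\beta^r)}$ follows from $(v\hat r)_{\beta^r}=v_\beta$. Hence $\hat R$ normalises $D=\prod_\beta\Sigma_{(\beta)}$. Every element of $D$ stabilises each block while $\hat R$ acts faithfully on the blocks, so $D\cap\hat R=1$. This gives $|T(R)|=|\Sigma|^{2^j}|R|=|H|\,|R|$ by Lemma~\ref{lem:Sigma}. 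Next, $x_k=\log_2|H|+2^k-1-a_k$. Using $a_k=2^{j-2}(2^{n+2}-3)=2^k-3\cdot2^{j-2}$ from Theorem~\ref{thm:largest}, we get $x_k-\log_2|H|=3\cdot2^{j-2}-1$.

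\textbf{(b)} Write $t=s\hat r$ with $s=\prod_\beta(\sigma_\beta)_{(\beta)}$ and suppose $vt=v$. Then $t$ preserves the zero set $Y=Y_j^\delta$, and $t$ maps $B_\beta$ onto $B_{\beta^r}$. We have $Y\subseteq\{0,\dots,2^j-1\}$, so $Y$ meets each block $B_\beta$ only in its point $\beta$ (that is, $\ell=0$), and only when $\beta\in Y$. In block coordinates, $v_\beta$ is $v_{\{0\}}$ if $\beta\in Y$ and $v_\emptyset$ otherwise. The fixing condition is then $(v_\beta\sigma_\beta)=v_{\beta^r}$ for all $\beta$. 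Comparing zero counts forces $Y^r=Y$, so $r\in\Stab_R(Y)$. Conversely, given such $r$, each $\sigma_\beta$ must map $v_\beta$ to $v_{\beta^r}$, which is the same vector, so $\sigma_\beta$ fixes $v_\beta$. Since $\Sigma$ is regular on $\cO_\emptyset$ and $\cO_1$ (Lemma~\ref{lem:Sigma}), this forces $\sigma_\beta=1$. Therefore $t\mapsto r$ is an isomorphism $T(R)\cap C(v)\to\Stab_R(Y)$, whose inverse is $r\mapsto\hat r$. To confirm that $\hat r$ fixes $v$, note that $\hat r$ permutes coordinates and maps $Y$ onto $Y^r=Y$, because it sends $\beta+0$ to $\beta^r$.

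\textbf{(c)} This follows from Lemma~\ref{lem:criterion}, together with Theorem~\ref{thm:largest}, which identifies the largest orbits as $\cO_k^\delta$, and part (b).

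\textbf{(d)} The image $\bar T$ of $T(R)$ in $W_k$ contains $\hat R$ and the $(g_n)_{(\beta)}$, which are the images of $\zeta_0g_n$. The group $\bar T$ is transitive on $\Omega_k$ if and only if $R$ is transitive on $\Omega_j$. Indeed, the $(g_n)_{(\beta)}$ are transitive within each block, and the blocks are permuted via $R$; if $R$ is intransitive, the union of the blocks over an $R$-orbit is a proper $\bar T$-invariant set. Lemma~\ref{lem:irr} then gives reducibility in the intransitive case.

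In the transitive case we must verify conditions (ii) and (iii) of Lemma~\ref{lem:irr} for $M\supseteq\prod_\beta(M_n)_{(\beta)}$. For (iii) with $\omega,\omega'$ in the same block, use $d^{\omega,\omega''}$ with $\omega''$ a third point of that block; this needs $2^n\ge4$, which holds since $n\ge2$. If $\omega,\omega'$ lie in different blocks, take $\omega''$ in the block of $\omega$. For (ii), $d^{\omega,\omega''}$ has entry $\zeta$, which generates $\K$.

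The main obstacle is part (b): specifically, the bookkeeping showing that the $\Sigma$-components must be trivial, which relies essentially on the regularity in Lemma~\ref{lem:Sigma}.
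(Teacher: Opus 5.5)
Your proposal is correct and follows the paper's proof essentially step by step. The shared steps are: the block decomposition of Lemma~\ref{lem:tree}(c) with $a=n$; the order count via Lemma~\ref{lem:Sigma} and Theorem~\ref{thm:largest}; the reduction of the fixing condition to $v_\beta\sigma_\beta=v_{\beta^r}$, where preservation of zero counts forces $r\in\Stab_R(Y_j^\delta)$ and regularity of $\Sigma$ forces $\sigma_\beta=1$; and Lemma~\ref{lem:irr} applied with the elements $d^{\omega,\omega''}$ chosen inside a block of size $2^n\ge4$. Your additional details, namely the identity $a_k=2^k-3\cdot2^{j-2}$ and the remark that $v_\beta=v_{\beta^r}$ so the unique $\sigma_\beta$ is the identity, are accurate refinements of the same argument.
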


\begin{proof}
(a) By Lemma~\ref{lem:tree}(c), $\hat r^{-1}\sigma_{(\beta)}\hat r=\sigma_{(\beta^r)}$, so $\hat R$ normalises $\prod_\beta\Sigma_{(\beta)}$, and the two subgroups intersect trivially. Hence $|T(R)|=|\Sigma|^{2^j}|R|=|H||R|$ by Lemma~\ref{lem:Sigma}. By Theorem~\ref{thm:largest}, $x_k-\log_2|H|=3\cdot2^{j-2}-1$.

(b) Let $v=v_{Y_j^\delta}$. As $Y_j^\delta\subseteq\{0,\dots,2^j-1\}$, each block $B_\beta$ contains at most one point of $Y_j^\delta$, namely $\beta$, and it does so if and only if $\beta\in Y_j^\delta$. Hence $v_\beta=v_{\{0\}}\in\cO_1$ if $\beta\in Y_j^\delta$ and $v_\beta=v_\emptyset\in\cO_\emptyset$ otherwise. An element $t=\prod_\beta(\sigma_\beta)_{(\beta)}\,\hat r$ of $T(R)$ satisfies $(vt)_{\beta^r}=v_\beta\sigma_\beta$, so $t$ fixes $v$ if and only if $v_\beta\sigma_\beta=v_{\beta^r}$ for all $\beta$. As $\sigma_\beta$ preserves the number of zero coordinates, this forces $r\in\Stab_R(Y^\delta_j)$. Conversely, for $r\in\Stab_R(Y_j^\delta)$ and each $\beta$ the vectors $v_\beta$ and $v_{\beta^r}$ lie in the same orbit $\cO_\emptyset$ or $\cO_1$, so by Lemma~\ref{lem:Sigma} there is exactly one $\sigma_\beta\in\Sigma$ with $v_\beta\sigma_\beta=v_{\beta^r}$. Hence $t\mapsto r$ is an isomorphism $T(R)\cap C(v)\to\Stab_R(Y_j^\delta)$.

(c) follows from (a), (b), Lemma~\ref{lem:criterion} and Theorem~\ref{thm:largest}.

(d) The image of $T(R)$ in $W_k$ contains $(g_n)_{(\beta)}$ for every $\beta$, and $\hat R$; so it is transitive if and only if $R$ is, and if $R$ is not transitive, $T(R)$ is reducible by Lemma~\ref{lem:irr}. Moreover $T(R)\cap H$ contains $(M_n)_{(\beta)}$ for every $\beta$. Given $\omega\ne\omega'$ with $\omega\in B_\beta$, choose $\omega''\in B_\beta\setminus\{\omega,\omega'\}$, which is possible as $|B_\beta|=2^n\ge4$. Then $d^{\omega,\omega''}\in(M_n)_{(\beta)}$ satisfies conditions (ii) and (iii) of Lemma~\ref{lem:irr}, and $T(R)$ is irreducible if $R$ is transitive.
\end{proof}

\subsection{The permutation problem}

By Proposition~\ref{prop:reduction}, it remains to find, for each $j\ge3$, a transitive subgroup $R\le W_j$ of order $2^{3\cdot2^{j-2}-1}$ with $\Stab_R(Y_j^\delta)=1$ for exactly one $\delta$. Throughout, $W_j$ acts on the subsets of $\Omega_j$. For $j\ge1$ and $D\le\{u_1^Lu_2^R:u_1,u_2\in W_j\}$ we write $D\langle\tau\rangle$ for the subgroup of $W_{j+1}$ generated by $D$ and $\tau=\tau_{j+1}$, whenever $\tau$ normalises $D$.

\begin{lemma}\label{lem:halves}
Let $j\ge2$, $\delta\in\{-,0,+\}$, and write $Y_{j+1}^\delta=(Y_1,Y_2)$. Let $D\le\{u_1^Lu_2^R:u_1,u_2\in W_j\}$ be normalised by $\tau$. Then $\Stab_{D\langle\tau\rangle}(Y^\delta_{j+1})=\{u_1^Lu_2^R\in D:u_1\in\Stab(Y_1),\,u_2\in\Stab(Y_2)\}$.
\end{lemma}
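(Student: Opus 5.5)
Every element of $D\langle\tau\rangle$ is either an element of $D$ or an element of the coset $D\tau$. The key point is that $Y_1$ and $Y_2$ are not $W_j$-conjugate, so no element of $D\tau$ can fix $Y^\delta_{j+1}$. For $u_1^Lu_2^R\in D$ we use the identities $(X^{u_1^Lu_2^R})_L=X_L^{u_1}$ and $(X^{u_1^Lu_2^R})_R=X_R^{u_2}$, which appear in the proof of Lemma~\ref{lem:subsets}. By these identities, $u_1^Lu_2^R$ fixes $Y^\delta_{j+1}=(Y_1,Y_2)$ exactly when $u_1\in\Stab(Y_1)$ and $u_2\in\Stab(Y_2)$. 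This already gives the inclusion $\supseteq$, and it gives equality on the part of the stabiliser lying in $D$.

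Now consider an element $u_1^Lu_2^R\tau$ of $D\tau$. Applying the same identities together with $(X^\tau)_L=X_R$ and $(X^\tau)_R=X_L$, the image of $(Y_1,Y_2)$ is $(Y_2^{u_2},Y_1^{u_1})$. For this element to fix $Y^\delta_{j+1}$ we would need $Y_2^{u_2}=Y_1$, and in particular $Y_1$ and $Y_2$ would be $W_j$-conjugate. By Definition~\ref{def:Y}, $(Y_1,Y_2)$ is one of $(Y_j^-,Y_j^0)$, $(Y_j^-,Y_j^+)$ or $(Y_j^0,Y_j^+)$. The two sets in each pair have different sizes $2^{j-1}+\delta$, so they cannot be conjugate. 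Hence no element of $D\tau$ lies in the stabiliser, and the equality follows.

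The only point that needs care is this non-conjugacy. Here it reduces to comparing cardinalities, which is immediate from the size formula in Definition~\ref{def:Y}, so the lemma holds for every $j\ge2$. No step is a real obstacle.
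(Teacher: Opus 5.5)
Your proposal is correct and follows the paper's argument: elements of $D\tau$ interchange the halves, so they cannot stabilise $(Y_1,Y_2)$ because $|Y_1|\ne|Y_2|$, while for elements $u_1^Lu_2^R$ of $D$ the stabiliser condition splits componentwise. You simply write out the image $(Y_2^{u_2},Y_1^{u_1})$ and the componentwise check on $D$, both of which the paper leaves implicit.
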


\begin{proof}
By Definition~\ref{def:Y}, $|Y_1|\ne|Y_2|$, so no element interchanging the halves stabilises $Y^\delta_{j+1}$, and the elements of $D\langle\tau\rangle\setminus D$ interchange the halves.
\end{proof}

\begin{lemma}\label{lem:level2}
In $W_2=\langle\tau,(0\,2),(1\,3)\rangle\cong D_8$, where $\tau=(0\,1)(2\,3)$, consider $C_4=\langle g_2\rangle=\langle(0\,1\,2\,3)\rangle$, $E_2=\{1,(0\,1)(2\,3),(0\,2)(1\,3),(0\,3)(1\,2)\}$ and $B_2=\langle(0\,2),(1\,3)\rangle$.
\begin{enumerate}[(a)]
\item $\Stab_{C_4}(Y_2^\delta)=1$ for all $\delta$.
\item $\Stab_{E_2}(Y_2^-)=\Stab_{E_2}(Y_2^+)=1\ne\Stab_{E_2}(Y_2^0)$.
\item $\Stab_{B_2}(Y_2^0)=1\ne\Stab_{B_2}(Y_2^-),\Stab_{B_2}(Y_2^+)$.
\item $\Stab_{W_2}(Y_2^-)=\Stab_{W_2}(Y_2^+)=\langle(1\,3)\rangle$ and $\Stab_{W_2}(Y_2^0)=\langle\tau\rangle$.
\end{enumerate}
\end{lemma}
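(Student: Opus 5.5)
This is a finite check in $W_2\cong D_8$, so the plan is direct enumeration, using the explicit sets $Y_2^-=\{0\}$, $Y_2^0=\{0,1\}$ and $Y_2^+=\{0,1,3\}$ from Definition~\ref{def:Y}. First I will confirm the description of $W_2$. By Lemma~\ref{lem:tree}(a), $W_2$ has order $2^3=8$ and consists of the elements $u_1^Lu_2^R\tau^\varepsilon$ with $u_i\in W_1=\langle(0\,1)\rangle$. For $u=(0\,1)$ we get $u^L=(0\,2)$ and $u^R=(1\,3)$, so $W_2=\langle(0\,2),(1\,3),\tau\rangle$. One checks $(0\,2)\tau=(0\,1\,2\,3)^{\pm1}$, so $g_2\in W_2$ and $W_2\cong D_8$. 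The subgroups $C_4$, $E_2$ and $B_2$ lie in $W_2$ by Lemma~\ref{lem:tree}(b),(c).

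For (d), I will use Lemma~\ref{lem:subsets}. Write $Y_2^-=(\{0\},\emptyset)$, $Y_2^0=(\{0\},\{0\})$ and $Y_2^+=(\{0\},\{0,1\})$. For $Y_2^\pm$ the halves have different sizes, so $\epsilon=0$. The stabiliser is therefore $\{u_1^Lu_2^R\}$ with $u_1\in\Stab_{W_1}(\{0\})=1$ and $u_2$ arbitrary, which gives $\langle(1\,3)\rangle$. For $Y_2^0$ the halves are equal, so $\epsilon=1$. The part fixing the halves is trivial, and $\tau$ swaps $0$ and $1$, so it stabilises $\{0,1\}$; hence the stabiliser is $\langle\tau\rangle$.

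Parts (a)--(c) then follow by intersecting each subgroup with these stabilisers of order $2$:
\begin{itemize}
\item[] (a) $C_4\cap\langle(1\,3)\rangle=1$, since $(1\,3)$ is not a power of $(0\,1\,2\,3)$, and $C_4\cap\langle\tau\rangle=1$, since the only involution in $C_4$ is $(0\,2)(1\,3)\ne\tau$.
\item[] (b) $(1\,3)\notin E_2$, while $\tau\in E_2$.
\item[] (c) $(1\,3)\in B_2$, while $\tau\notin B_2$.
\end{itemize}

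There is no real obstacle here. The only point needing care is computing $u^L$ and $u^R$ correctly from the definitions, so that the stabilisers in (d) are the right transposition and the right double transposition.
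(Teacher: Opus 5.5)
Your proof is correct; it differs from the paper's only in the order of the steps.

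\emph{Your route.} You first compute the three full stabilisers in $W_2$ via Lemma~\ref{lem:subsets}, writing $Y_2^-=(\{0\},\emptyset)$, $Y_2^0=(\{0\},\{0\})$ and $Y_2^+=(\{0\},\{0,1\})$. Parts (a)--(c) then reduce to checking whether $(1\,3)$ and $\tau$ lie in $C_4$, $E_2$ and $B_2$.

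\emph{The paper's route.} It proves (a)--(c) directly, before (d):
\begin{enumerate}[(i)]
\item It notes that $Y_2^+$ is the complement of $\{2\}$, so $\Stab(Y_2^+)=\Stab(\{2\})$. Hence both $Y_2^-$ and $Y_2^+$ have point stabilisers.
\item It uses that $C_4$ and $E_2$ act semiregularly on $\Omega_2$, which disposes of $Y_2^\pm$ in (a) and (b).
\item It computes the images of $\{0,1\}$ explicitly for the remaining cases.
\end{enumerate}
Only afterwards does it prove (d). For $Y_2^\pm$ it uses that point stabilisers in the transitive group $W_2$ of order $8$ have order $2$ and contain $(1\,3)$. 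For $\{0,1\}$ it uses Lemma~\ref{lem:subsets}.

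\emph{Comparison.} Your ordering is a little more economical, since one application of Lemma~\ref{lem:subsets} settles all four parts. The paper's version gives a structural reason for the point-type cases, namely semiregularity on points, and does not need the full stabilisers in $W_2$ for (a)--(c).
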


\begin{proof}
Recall $Y_2^-=\{0\}$, $Y_2^0=\{0,1\}$ and $Y_2^+=\{0,1,3\}$, whose complement is $\{2\}$; so $\Stab(Y_2^+)=\Stab(\{2\})$. (a) No nontrivial element of $C_4$ fixes a point, and $g_2^2=(0\,2)(1\,3)$ maps $\{0,1\}$ to $\{2,3\}$. (b) $E_2$ is regular on $\Omega_2$, and $(0\,1)(2\,3)\in E_2$ fixes $\{0,1\}$. (c) $(1\,3)\in B_2$ fixes $0$ and $2$, while $(0\,2)$, $(1\,3)$ and $(0\,2)(1\,3)$ map $\{0,1\}$ to $\{1,2\}$, $\{0,3\}$ and $\{2,3\}$. (d) The point stabilisers in $W_2$ have order $2$, and $(1\,3)$ fixes $0$ and $2$. Since $\{0,1\}=(\{0\},\{0\})$, Lemma~\ref{lem:subsets} gives $|\Stab_{W_2}(\{0,1\})|=2$, and $\tau$ fixes $\{0,1\}$.
\end{proof}

\begin{lemma}\label{lem:R3}
The group $R_3=E_2\wr C_2=\{u_1^Lu_2^R\tau^\varepsilon:u_1,u_2\in E_2,\,\varepsilon\in\{0,1\}\}\le W_3$ is transitive of order $32=2^{3\cdot2-1}$, and $\Stab_{R_3}(Y_3^0)=1\ne\Stab_{R_3}(Y_3^-),\Stab_{R_3}(Y_3^+)$.
\end{lemma}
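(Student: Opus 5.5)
The plan is to verify the three claims about $R_3$ (order and transitivity, a trivial stabiliser for $Y_3^0$, nontrivial stabilisers for $Y_3^\pm$) directly from the recursive description of $Y_3^\delta$ together with Lemmas~\ref{lem:tree}, \ref{lem:halves} and~\ref{lem:level2}.

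\emph{Order and transitivity.} By Lemma~\ref{lem:tree}(a) the products $u_1^Lu_2^R\tau^\varepsilon$ are all distinct elements of $W_3$. Since $E_2$ is a group, the conjugation rule $\tau u_1^L u_2^R\tau=u_2^Lu_1^R$ shows that this set is closed under multiplication, so $R_3$ is a subgroup. Its order is $4\cdot4\cdot2=32=2^5=2^{3\cdot2^{1}-1}$, as required. For transitivity, note that $E_2$ is transitive on $\Omega_2$. Hence $\{u_1^L\}$ is transitive on the even points, $\{u_2^R\}$ is transitive on the odd points, and $\tau$ interchanges the two halves.

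\emph{Stabilisers.} Apply Lemma~\ref{lem:halves} with $D=\{u_1^Lu_2^R:u_1,u_2\in E_2\}$, which is normalised by $\tau$. Since $D\langle\tau\rangle=R_3$, this gives
\[\Stab_{R_3}(Y_3^\delta)\cong\Stab_{E_2}(Y_1)\times\Stab_{E_2}(Y_2),\qquad Y_3^\delta=(Y_1,Y_2).\]
By Definition~\ref{def:Y} we have $Y_3^-=(Y_2^-,Y_2^0)$, $Y_3^0=(Y_2^-,Y_2^+)$ and $Y_3^+=(Y_2^0,Y_2^+)$. Lemma~\ref{lem:level2}(b) then settles each case. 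For $Y_3^0$ both factors are trivial, so the stabiliser is $1$. For $Y_3^-$ and $Y_3^+$ one half is $Y_2^0$, whose stabiliser in $E_2$ is nontrivial, namely $\langle(0\,1)(2\,3)\rangle$. Explicitly, $\bigl((0\,1)(2\,3)\bigr)^L$ is a nontrivial element of $R_3$ stabilising $Y_3^-$, and similarly for $Y_3^+$.

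There is no serious obstacle here. The only point needing care is that Lemma~\ref{lem:halves} requires $\tau$ to normalise $D$, which is immediate from $\tau u_1^Lu_2^R\tau=u_2^Lu_1^R$; this identity follows directly from the definitions of $u^L$, $u^R$ and $\tau$.
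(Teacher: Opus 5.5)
Your argument is correct and is the paper's proof: you check order and transitivity directly, then get the stabilisers from Lemma~\ref{lem:halves} with $D=\{u_1^Lu_2^R:u_1,u_2\in E_2\}$, the decompositions $Y_3^-=(Y_2^-,Y_2^0)$, $Y_3^0=(Y_2^-,Y_2^+)$, $Y_3^+=(Y_2^0,Y_2^+)$, and Lemma~\ref{lem:level2}(b). The one slip is in your explicit witness for $Y_3^-$: since $Y_3^-=\{0,1,3\}$ has $Y_2^0$ as its \emph{right} half, the stabilising element is $((0\,1)(2\,3))^R$, which swaps $1$ and $3$; by contrast $((0\,1)(2\,3))^L$ moves $0$ to $2$, and it is the witness for $Y_3^+$ instead.
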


\begin{proof}
Transitivity and the order are clear. We have $Y_3^0=(Y_2^-,Y_2^+)$, $Y_3^-=(Y_2^-,Y_2^0)$ and $Y_3^+=(Y_2^0,Y_2^+)$. Hence the assertion follows from Lemma~\ref{lem:halves} and Lemma~\ref{lem:level2}(b).
\end{proof}

\begin{definition}\label{def:admissible}
Let $j\ge2$. A subgroup $U\le W_j$ is \emph{admissible} if
\begin{enumerate}[(i)]
\item $|U|=2^{3\cdot2^{j-2}}$;
\item for each $\delta\in\{-,0,+\}$, the stabiliser $\Stab_U(Y_j^\delta)$ has order $2$; let $c_\delta$ be its generator;
\item there are homomorphisms $f_1,f_2\colon U\to\F_2$ with $f_1(c_\delta)=1$ for all $\delta$ and such that $f_2(c_-)$, $f_2(c_0)$, $f_2(c_+)$ are not all equal.
\end{enumerate}
\end{definition}

\begin{lemma}\label{lem:D8adm}
$W_2$ is admissible and transitive. Let $p_0(w)=1$ if $w$ interchanges the halves of $\Omega_2$ and $p_0(w)=0$ otherwise, and let $p_1(w)=1$ if $w$ is an odd permutation and $p_1(w)=0$ otherwise. Then one can take $f_1=p_0+p_1$ and $f_2=p_0$, and the values of $f_2$ on $c_-,c_0,c_+$ are $0,1,0$.
\end{lemma}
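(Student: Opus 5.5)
We verify the three conditions of Definition~\ref{def:admissible} for $U=W_2$ by direct inspection, using the explicit description $W_2=\langle\tau,(0\,2),(1\,3)\rangle\cong D_8$ from Lemma~\ref{lem:level2}. Transitivity is immediate, since $W_2$ contains the $4$-cycle $g_2$ (Lemma~\ref{lem:tree}(b)). For condition (i), $|W_2|=2^{2^2-1}=8=2^{3\cdot2^0}$ by Lemma~\ref{lem:tree}(a). For condition (ii), Lemma~\ref{lem:level2}(d) already gives the stabilisers. They have order $2$, with
\[c_-=c_+=(1\,3),\qquad c_0=\tau=(0\,1)(2\,3).\]

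For condition (iii) we first check that $p_0$ and $p_1$ are homomorphisms $W_2\to\F_2$. For $p_1$ this is the sign character. For $p_0$, every element of $W_2$ permutes the two halves $\{0,2\}$ and $\{1,3\}$, so $p_0$ is the composite of $W_2\to\Sym(\{\text{halves}\})\cong C_2$ with the isomorphism onto $\F_2$. Hence $f_1=p_0+p_1$ and $f_2=p_0$ are homomorphisms.

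It then remains to evaluate them on $c_\delta$:
\begin{itemize}
\item For $c_\pm=(1\,3)$, which fixes both halves and is a transposition, we get $p_0=0$ and $p_1=1$. So $f_1=1$ and $f_2=0$.
\item For $c_0=\tau$, which interchanges the halves and is even, we get $p_0=1$ and $p_1=0$. So $f_1=1$ and $f_2=1$.
\end{itemize}
Thus $f_1(c_\delta)=1$ for all $\delta$, and the values of $f_2$ are $0,1,0$, which are not all equal.

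No step is a genuine obstacle. The only point needing care is to confirm that the generators $c_\delta$ are exactly as stated in Lemma~\ref{lem:level2}(d), in particular that $\Stab_{W_2}(Y_2^+)=\Stab_{W_2}(\{2\})=\langle(1\,3)\rangle$, and to keep the parity and half-swapping bookkeeping straight.
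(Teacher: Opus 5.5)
Your proof is correct and follows essentially the same route as the paper: you read off $c_-=c_+=(1\,3)$ and $c_0=\tau$ from Lemma~\ref{lem:level2}(d), observe that $p_0$ and $p_1$ are homomorphisms, and evaluate them on these generators. You also spell out the transitivity (via $g_2$) and the order check $|W_2|=8=2^{3\cdot2^{0}}$, which the paper leaves essentially implicit.
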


\begin{proof}
$|W_2|=8$, and by Lemma~\ref{lem:level2}(d) we have $c_-=c_+=(1\,3)$ and $c_0=\tau$. Both $p_0$ and $p_1$ are homomorphisms, with $p_0((1\,3))=0$, $p_1((1\,3))=1$, $p_0(\tau)=1$ and $p_1(\tau)=0$.
\end{proof}

\begin{proposition}\label{prop:fibre}
Let $j\ge2$ and let $U\le W_j$ be admissible, with $c_\delta$, $f_1$, $f_2$ as in Definition~\ref{def:admissible}, and put $w_\delta=f_2(c_\delta)$.
\begin{enumerate}[(a)]
\item Let $D'=\{u_1^Lu_2^R:u_1,u_2\in U,\ f_1(u_1)=f_1(u_2)\}$ and $U'=D'\langle\tau\rangle\le W_{j+1}$. Then $U'$ is admissible. More precisely, the stabilisers of $Y^-_{j+1}$, $Y^0_{j+1}$, $Y^+_{j+1}$ in $U'$ are generated by $c_-^Lc_0^R$, $c_-^Lc_+^R$, $c_0^Lc_+^R$, respectively, and one can take
\[f_1'(u_1^Lu_2^R\tau^\varepsilon)=f_1(u_1),\qquad f_2'(u_1^Lu_2^R\tau^\varepsilon)=f_2(u_1)+f_2(u_2).\]
The values of $f_2'$ on these three generators are $w_-+w_0$, $w_-+w_+$ and $w_0+w_+$.
\item Let $D''=\{u_1^Lu_2^R:u_1,u_2\in U,\ f_1(u_1)=f_1(u_2),\ f_2(u_1)=f_2(u_2)\}$ and $V(U)=D''\langle\tau\rangle\le W_{j+1}$. Then $|V(U)|=2^{3\cdot2^{j-1}-1}$, and $V(U)$ stabilises $(Y_j^\gamma,Y_j^\delta)$ only trivially if and only if $w_\gamma\ne w_\delta$. Hence $\Stab_{V(U)}(Y^\epsilon_{j+1})=1$ for exactly two $\epsilon\in\{-,0,+\}$.
\item Let $S\le W_i$, $i\ge2$, be of order $2^{3\cdot2^{i-2}-1}$ with $\Stab_S(Y_i^\delta)=1$ exactly for $\delta\in\{\alpha,\beta\}$, where $\alpha\ne\beta$. Then $S\wr C_2=\{s_1^Ls_2^R\tau^\varepsilon:s_1,s_2\in S\}\le W_{i+1}$ has order $2^{3\cdot2^{i-1}-1}$, and $\Stab_{S\wr C_2}(Y_{i+1}^\epsilon)=1$ for exactly one $\epsilon$, namely for the $\epsilon$ for which $Y^\epsilon_{i+1}$ equals $(Y_i^\alpha,Y_i^\beta)$ or $(Y_i^\beta,Y_i^\alpha)$.
\end{enumerate}
If $U$ (resp.\ $S$) is transitive, then so are $U'$ and $V(U)$ (resp.\ $S\wr C_2$).
\end{proposition}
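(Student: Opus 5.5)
The three parts all rest on the same two tools: Lemma~\ref{lem:halves}, which reduces stabilisers in $D\langle\tau\rangle$ to pairs of stabilisers in the halves, and Lemma~\ref{lem:tree}(a), which gives unique normal forms $u_1^Lu_2^R\tau^\varepsilon$. The recursion $Y^-_{j+1}=(Y^-_j,Y^0_j)$, $Y^0_{j+1}=(Y^-_j,Y^+_j)$, $Y^+_{j+1}=(Y^0_j,Y^+_j)$ always pairs two \emph{distinct} indices. I therefore only need to understand $\{u_1^Lu_2^R\in D:u_1\in\Stab_U(Y^\gamma_j),\,u_2\in\Stab_U(Y^\delta_j)\}$ for $\gamma\ne\delta$. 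By admissibility this set is $\{c_\gamma^{a}{}^Lc_\delta^{b}{}^R\}\cap D$ with $a,b\in\{0,1\}$.

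For (a), $\tau$ normalises $D'$, because conjugation by $\tau$ swaps $u_1$ and $u_2$ and the defining condition $f_1(u_1)=f_1(u_2)$ is symmetric. Hence $U'=D'\langle\tau\rangle$ is a group. The map $(u_1,u_2)\mapsto f_1(u_1)+f_1(u_2)$ is a surjective homomorphism $U\times U\to\F_2$, since $f_1(c_-)=1$ shows $f_1\ne0$. So $|D'|=|U|^2/2$, and $|U'|=|U|^2=2^{3\cdot2^{j-1}}$. Since $f_1(c_\gamma)=f_1(c_\delta)=1$, among the four candidate elements exactly $1$ and $c_\gamma^Lc_\delta^R$ lie in $D'$, so the stabilisers have order $2$ with the stated generators. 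Next I check that $f_1'$ and $f_2'$ are well defined homomorphisms on $U'$. For $f_2'$ this is clear, since it is symmetric in $u_1,u_2$ and is therefore $\tau$-invariant. For $f_1'$ it follows from $f_1(u_1)=f_1(u_2)$ on $D'$, so that $\tau$-conjugation does not change it. The values are then $f_1'(c_\gamma^Lc_\delta^R)=f_1(c_\gamma)=1$ and $f_2'=w_\gamma+w_\delta$. The three sums $w_-+w_0$, $w_-+w_+$, $w_0+w_+$ cannot all be equal: if they were, their total $0$ would equal three times a common value, forcing that value to be $0$ and hence all $w$ equal, contrary to admissibility of $U$.

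For (b), the map $(u_1,u_2)\mapsto(f_1(u_1)+f_1(u_2),\,f_2(u_1)+f_2(u_2))$ is surjective onto $\F_2^2$, because $f_1$ and $f_2$ are linearly independent. Indeed, if $f_2=f_1$ or $f_2=0$, then $f_2$ would be constant on the $c_\delta$. This gives $|D''|=|U|^2/4$ and $|V(U)|=2^{3\cdot2^{j-1}-1}$, and $\tau$ normalises $D''$ as before. The element $c_\gamma^Lc_\delta^R$ satisfies the $f_1$-condition, and it lies in $D''$ iff $w_\gamma=w_\delta$. The mixed elements $c_\gamma^L$ and $c_\delta^R$ fail the $f_1$-condition. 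Thus the stabiliser is trivial iff $w_\gamma\ne w_\delta$. Since the $w$'s take both values, say two are equal and one differs, exactly two of the three pairs have different values.

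For (c), the order is $2|S|^2=2^{3\cdot2^{i-1}-1}$. By Lemma~\ref{lem:halves} with $D=S\times S$, the stabiliser of $(Y^\gamma_i,Y^\delta_i)$ is $\Stab_S(Y^\gamma_i)\times\Stab_S(Y^\delta_i)$, which is trivial iff $\{\gamma,\delta\}=\{\alpha,\beta\}$. Exactly one of the three pairs in the recursion equals $\{\alpha,\beta\}$. Finally, transitivity: in each construction the half-interchange $\tau$ is present, so it suffices that the subgroup fixing the halves is transitive on each half. For $D'$ and $D''$, the projection onto the left component is already all of $U$, because one may take $u_2=u_1$. The same holds for $S\times S$. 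The only genuinely delicate point is the well-definedness and linear-independence bookkeeping for $f_1'$, $f_2'$ and the index computations; everything else is mechanical.
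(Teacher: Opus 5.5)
Your proposal is correct and follows essentially the same route as the paper: index counting for $D'$ and $D''$ as kernels of maps to $\F_2$ or $\F_2^2$, Lemma~\ref{lem:halves} to reduce each stabiliser to $\{1,c_\gamma^Lc_\delta^R\}\cap D$, and the elements $u^Lu^R$ together with $\tau$ for transitivity. The differences are only cosmetic. You check that $f_1'$ and $f_2'$ are homomorphisms via $\tau$-invariance on $D'\rtimes\langle\tau\rangle$, where the paper multiplies normal forms directly. You also use sum and linear-independence arguments where the paper reads the conclusions off the values $(1,w_\delta)$.
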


\begin{proof}
(a) $D'$ is the kernel of $u_1^Lu_2^R\mapsto f_1(u_1)+f_1(u_2)$, which is surjective as $f_1(c_-)=1$, and it is normalised by $\tau$, since $\tau u_1^Lu_2^R\tau=u_2^Lu_1^R$. Hence $|U'|=2|D'|=|U|^2=2^{3\cdot2^{j-1}}$. By Lemma~\ref{lem:halves}, the stabiliser of $(Y_j^\gamma,Y_j^\delta)$ in $U'$ consists of the elements $a^Lb^R$ with $a\in\langle c_\gamma\rangle$, $b\in\langle c_\delta\rangle$ and $f_1(a)=f_1(b)$; as $f_1(c_\gamma)=f_1(c_\delta)=1$, it is $\{1,c_\gamma^Lc_\delta^R\}$. Next, $f_1'$ and $f_2'$ are homomorphisms: if $x=u_1^Lu_2^R\tau^\varepsilon$ and $y=w_1^Lw_2^R\tau^\eta$ lie in $U'$, then $xy=(u_1w_1')^L(u_2w_2')^R\tau^{\varepsilon+\eta}$, where $(w_1',w_2')$ is $(w_1,w_2)$ or $(w_2,w_1)$; since $f_1(w_1)=f_1(w_2)$, we get $f_1'(xy)=f_1'(x)+f_1'(y)$, and $f_2'$ is symmetric in the two components. Finally $f_1'(c_\gamma^Lc_\delta^R)=1$ and $f_2'(c_\gamma^Lc_\delta^R)=w_\gamma+w_\delta$. As exactly two of $w_-,w_0,w_+$ are equal, exactly one of the three sums vanishes, so $f_2'$ is not constant on the three generators.

(b) Let $\varphi=(f_1,f_2)\colon U\to\F_2^2$. Since $\varphi(c_\delta)=(1,w_\delta)$ takes both values $(1,0)$ and $(1,1)$, $\varphi$ is surjective, and $D''$, the kernel of $u_1^Lu_2^R\mapsto\varphi(u_1)+\varphi(u_2)$, has order $|U|^2/4$. It is normalised by $\tau$, so $|V(U)|=|U|^2/2=2^{3\cdot2^{j-1}-1}$. By Lemma~\ref{lem:halves}, the stabiliser of $(Y^\gamma_j,Y^\delta_j)$ consists of the $a^Lb^R$ with $a\in\langle c_\gamma\rangle$, $b\in\langle c_\delta\rangle$ and $\varphi(a)=\varphi(b)$. As $\varphi(c_\gamma),\varphi(c_\delta)\ne0$, it is trivial if and only if $\varphi(c_\gamma)\ne\varphi(c_\delta)$, that is, if and only if $w_\gamma\ne w_\delta$. Exactly two of the three pairs $\{-,0\}$, $\{-,+\}$, $\{0,+\}$ satisfy this.

(c) The order is $2|S|^2$. By Lemma~\ref{lem:halves}, the stabiliser of $(Y^\gamma_i,Y^\delta_i)$ is $\Stab_S(Y^\gamma_i)^L\Stab_S(Y^\delta_i)^R$, which is trivial if and only if $\{\gamma,\delta\}=\{\alpha,\beta\}$.

The groups $U'$ and $V(U)$ contain $u^Lu^R$ for every $u\in U$, as well as $\tau$; this proves the assertion on transitivity.
\end{proof}

\begin{corollary}\label{cor:Rj}
For every $j\ge3$ there is a transitive subgroup $R_j\le W_j$ of order $2^{3\cdot2^{j-2}-1}$ such that $\Stab_{R_j}(Y_j^0)=1$, while $\Stab_{R_j}(Y_j^-)\ne1\ne\Stab_{R_j}(Y_j^+)$.
\end{corollary}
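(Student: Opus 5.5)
For $j=3$ the group $R_3=E_2\wr C_2$ of Lemma~\ref{lem:R3} already has all the required properties, so we may take $R_j=R_3$ there. For $j\ge4$ we build $R_j$ with the fibre product constructions of Proposition~\ref{prop:fibre}, starting from the admissible transitive group $W_2$ of Lemma~\ref{lem:D8adm}. The key observation is that part~(c) of Proposition~\ref{prop:fibre} turns a group with exactly two trivial stabilisers into one with exactly one trivial stabiliser. The trivial one is the stabiliser of the set $(Y_i^\alpha,Y_i^\beta)$ (or its swap). This is $Y_{i+1}^0$ precisely when $\{\alpha,\beta\}=\{-,+\}$, since $Y_{i+1}^0=(Y_i^-,Y_i^+)$. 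So it suffices to produce, for each $i\ge3$, a transitive $S\le W_i$ of order $2^{3\cdot2^{i-2}-1}$ with $\Stab_S(Y_i^\delta)=1$ exactly for $\delta\in\{-,+\}$. Then $R_{i+1}=S\wr C_2$ works.

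To get such an $S$, I would iterate Proposition~\ref{prop:fibre}(a) from $U_2=W_2$ to obtain admissible transitive groups $U_j\le W_j$ for all $j\ge2$, and then apply part~(b) to get $S=V(U_{i-1})\le W_i$. This $S$ is transitive of the right order, and its trivial stabilisers are those of $(Y_{i-1}^\gamma,Y_{i-1}^\delta)$ with $w_\gamma\ne w_\delta$, where $w$ denotes the $f_2$-values on $U_{i-1}$. The three sets $Y_i^-,Y_i^0,Y_i^+$ correspond to the pairs $\{-,0\},\{-,+\},\{0,+\}$. We need the two trivial ones to be $Y_i^-$ and $Y_i^+$, i.e.\ we need $w_-\ne w_0$ and $w_0\ne w_+$ with $w_-=w_+$. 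So the pattern of $f_2$-values on $U_{i-1}$ must be $(a,b,a)$ with $a\ne b$.

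The main obstacle is tracking this pattern. On $W_2$ it is $(0,1,0)$ by Lemma~\ref{lem:D8adm}, which is of the desired form. Under part~(a) the new pattern is $(w_-+w_0,\,w_-+w_+,\,w_0+w_+)$, and from $(0,1,0)$ this gives $(1,0,1)$. Applying (a) to $(1,0,1)$ gives $(1,0,1)$ again. Hence by induction every $U_j$ with $j\ge2$ has a pattern of the form $(a,b,a)$, $a\ne b$. Therefore $S=V(U_{i-1})$ has trivial stabilisers exactly at $Y_i^-$ and $Y_i^+$, and $R_{i+1}=V(U_{i-1})\wr C_2$ has trivial stabiliser exactly at $Y_{i+1}^0$.

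This covers every $j=i+1\ge4$ with $i\ge3$. The case $j=3$ is handled by $R_3$ as noted above. Alternatively, for $j=3$ one can take $S=V(W_2)\le W_3$ and get the result directly at level $4$; either way $j=3$ comes from Lemma~\ref{lem:R3}. Transitivity of $R_j$ follows throughout from the transitivity clauses in Proposition~\ref{prop:fibre}.
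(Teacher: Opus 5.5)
Your proposal is correct and follows the paper's proof essentially verbatim: you take $R_3$ from Lemma~\ref{lem:R3}, iterate Proposition~\ref{prop:fibre}(a) from $U_2=W_2$ while tracking the $f_2$-pattern $(0,1,0)\mapsto(1,0,1)\mapsto(1,0,1)$, and set $R_j=V(U_{j-2})\wr C_2$ for $j\ge4$ via parts~(b) and~(c). Your closing remark is slightly muddled, since $V(W_2)\wr C_2$ is the level-$4$ group $R_4$ and not a $j=3$ example ($V(W_2)$ itself has two trivial stabilisers), but this does not affect the argument.
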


\begin{proof}
For $j=3$ take $R_3$ from Lemma~\ref{lem:R3}. Let $U_2=W_2$, with $f_1,f_2$ as in Lemma~\ref{lem:D8adm}, and for $i\ge2$ let $U_{i+1}=U_i'$, with the homomorphisms $f_1',f_2'$ of Proposition~\ref{prop:fibre}(a). By induction, every $U_i$ is admissible and transitive, and the vector $(w_-,w_0,w_+)$ of values of $f_2$ on $c_-,c_0,c_+$ is $(0,1,0)$ for $i=2$ and $(1,0,1)$ for $i\ge3$, by the formula in Proposition~\ref{prop:fibre}(a). In both cases $w_-=w_+\ne w_0$, so by Proposition~\ref{prop:fibre}(b) the group $V(U_i)\le W_{i+1}$ is regular exactly on the $W_{i+1}$-orbits of $Y^-_{i+1}=(Y^-_i,Y^0_i)$ and $Y^+_{i+1}=(Y^0_i,Y^+_i)$. By Proposition~\ref{prop:fibre}(c), $R_{i+2}=V(U_i)\wr C_2$ has the required properties for $j=i+2\ge4$, as $Y^0_{i+2}=(Y^-_{i+1},Y^+_{i+1})$.
\end{proof}

\subsection{Proof of Theorem~\ref{thmC}}
If $q=3$ and $m=1$, take $T=P_1=\GL_1(3)$. Otherwise let $\K=\F_q$ if $q\ge5$ and $\K=\F_9$ if $q=3$, so that $P_m$ and $P(n,k)$ have the same orbits on $V_m$ and the same largest orbit size $2^{x_m}$ (Lemma~\ref{lem:sylow}); it suffices to find an irreducible $T\le P(n,k)$ of order $2^{x_m}$ with exactly one regular orbit. If $k<n$, use Proposition~\ref{prop:low}; if $n\le k\le n+2$, use Proposition~\ref{prop:mid}; and if $k\ge n+3$, use $T=T(R_{k-n})$, which has these properties and the unique regular orbit $\cO^0_k$ by Proposition~\ref{prop:reduction} and Corollary~\ref{cor:Rj}. Here irreducibility over $\F_p$ is provided by Lemma~\ref{lem:irr}. \qed

\begin{remark}
By Proposition~\ref{prop:reduction}, the group $T(V(U_{k-n-1}))$ of order $2^{x_k}$ has exactly the two regular orbits $\cO^-_k$ and $\cO^+_k$, for every $k\ge n+3$. Similarly, by Lemma~\ref{lem:level2}, at level $k=n+2$ the group $T(E_2)$ is irreducible with exactly the two regular orbits $\cO^-$ and $\cO^+$, and $T(C_4)$ is irreducible and regular on all three largest orbits.
\end{remark}

\section{Which largest orbits occur: proof of Theorem~\ref{thmE}}\label{sec:E}

In this section $n\ge2$ and $k\ge3$. For $x\in\Omega_k$ we call a map $\Omega_k\to\F_2$ of the form $x\mapsto a+bx_0+cx_1$ ($a,b,c\in\F_2$) an \emph{affine function of the two top digits}. Let $\cA_k$ be the set of all permutations of $\Omega_k$ of the form
\[x\longmapsto x\xor\Bigl(c_0+2c_1+\sum_{i=2}^{k-1}2^i\ell_i(x)\Bigr),\]
where $c_0,c_1\in\F_2$ and the $\ell_i$ are affine functions of the two top digits.

\begin{lemma}\label{lem:affine}
The set $\cA_k$ is a transitive subgroup of $W_k$ of order $2^{3k-4}$. The stabiliser in $\cA_k$ of $\{0,1,3\}$ is trivial, whereas the stabilisers of $\{0\}$ and of $\{0,1\}$ are not.
\end{lemma}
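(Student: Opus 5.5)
The plan is to check each assertion straight from the definition of $\cA_k$. The key observation is that in $x\mapsto x\xor c(x)$ the correction $c(x)$ depends only on the two lowest digits $x_0,x_1$. These are the digits that determine the halves and the quarters, which is why they are the "top" digits in the tree. Moreover, $c(x)$ leaves those two digits translated by the constant vector $(c_0,c_1)$.

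\emph{Group and order.} Take $s(x)=x\xor c(x)$ and $s'(y)=y\xor c'(y)$ in $\cA_k$. Then $y=s(x)$ has $(y_0,y_1)=(x_0+c_0,x_1+c_1)$. An affine function of $(y_0,y_1)$ is therefore an affine function of $(x_0,x_1)$, so $c(x)\xor c'(s(x))$ has the required shape and $ss'\in\cA_k$. The set $\cA_k$ is finite and consists of bijections: $s$ is a bijection because the map $x\mapsto c(x)$ is constant on the fibres of $(x_0,x_1)$, and $s$ permutes those fibres. Closure under composition then makes $\cA_k$ a group.

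For the order, the map $s$ determines $c$, since $c(x)=x\xor x^s$. The function $c$ determines $c_0,c_1$ and each $\ell_i$, because an affine function on $\F_2^2$ is determined by its values. This gives $2^2\cdot 8^{k-2}=2^{3k-4}$ elements.

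\emph{Membership in $W_k$ and transitivity.} Digit $i$ of $x^s$ is $x_i$ plus a quantity depending only on digits of index $<i$ when $i\ge2$, and on nothing when $i\le1$. Hence $x\bmod 2^i$ determines $x^s\bmod 2^i$, so $s$ maps residue classes modulo $2^i$ onto residue classes modulo $2^i$, and $s\in W_k$. Taking all $\ell_i$ constant gives $E_k\le\cA_k$, and $E_k$ is transitive by Lemma~\ref{lem:tree}(b).

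\emph{Stabilisers.} For $\{0\}$, take $c_0=c_1=0$, $\ell_2=x_0$ and the other $\ell_i=0$. This element fixes $0$ and sends $1\mapsto5$, so it is nontrivial; here we use $k\ge3$. For $\{0,1\}$, take $\ell_2=x_1$ instead. It fixes $0$ and $1$ and sends $2\mapsto6$.

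For $Y=\{0,1,3\}$, the pairs $(x_0,x_1)$ of its points are $(0,0),(1,0),(1,1)$. A stabilising element translates these pairs by $(c_0,c_1)$. The translation must therefore fix the complementary pair $(0,1)$, which forces $c_0=c_1=0$. The element then fixes each of $0,1,3$ individually, so every $\ell_i$ vanishes at $(0,0),(1,0),(1,1)$. These three points are affinely independent in $\F_2^2$, so every $\ell_i=0$ and the element is trivial.

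This last step, pinning down the translation part before killing the affine parts, is the only real point. I expect the main care to be needed in the bookkeeping of the digit convention, namely that "top" means the lowest-index digits $x_0,x_1$.
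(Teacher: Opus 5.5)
Your proof is correct and follows essentially the same route as the paper: closure because the top digits are merely translated by $(c_0,c_1)$, the count of the data $c_0,c_1,\ell_2,\dots,\ell_{k-1}$, the digit-dependence argument for $\cA_k\le W_k$, the inclusion $E_k\le\cA_k$ for transitivity, and the same explicit elements with $\ell_2=x_0$ and $\ell_2=x_1$ for the stabilisers of $\{0\}$ and $\{0,1\}$. There are two minor differences. In the $\{0,1,3\}$ step the paper first kills the $\ell_i$ (the images of $0,1,3$ lie in $\{0,1,3\}$, so their higher digits vanish) and only then the translation, so the order you single out as the key point is not essential. You also explicitly check that every map of the given form is a bijection, which the paper builds into the definition of $\cA_k$.
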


\begin{proof}
If $a,a'\in\cA_k$, then the two top digits of $x^a$ are $x_0+c_0$ and $x_1+c_1$, so an affine function of the two top digits of $x^a$ is an affine function of those of $x$; hence $aa'\in\cA_k$. As $\cA_k$ is a finite set of permutations closed under composition, it is a group. The data $c_0,c_1,\ell_2,\dots,\ell_{k-1}$ are determined by the permutation, so $|\cA_k|=4\cdot8^{k-2}=2^{3k-4}$. The digits $0,\dots,i-1$ of $x^a$ depend only on the digits $0,\dots,i-1$ of $x$, so $\cA_k\le W_k$, and $\cA_k\supseteq E_k$ is transitive.

Let $a\in\cA_k$ stabilise $X=\{0,1,3\}$. The top digits of $0$, $1$ and $3$ are $(0,0)$, $(1,0)$ and $(1,1)$, and their other digits vanish. Hence each $\ell_i$ vanishes at three of the four points of $\F_2^2$, so $\ell_i=0$. Then $a$ is the translation of the top digits by $(c_0,c_1)$, which preserves $\{(0,0),(1,0),(1,1)\}$ only if $c_0=c_1=0$. Finally, the element with $c_0=c_1=0$, $\ell_2(x)=x_0$ (respectively $\ell_2(x)=x_1$) and $\ell_i=0$ for $i\ge3$ is nontrivial and fixes $0$ (respectively $0$ and $1$).
\end{proof}

For $a,b\in\{0,1\}$ let $Q_{ab}=\{x\in\Omega_k:x_0=a,\ x_1=b\}$, the four residue classes modulo $4$, and for $d\in H$ let $\pi_{ab}(d)=\prod_{x\in Q_{ab}}d_x$. Let
\[M^{(3)}=\{d\in H:\ \pi_{00}(d)=\pi_{01}(d)=\pi_{10}(d)^{-1}=\pi_{11}(d)^{-1}\}.\]

\begin{lemma}\label{lem:M3}
$M^{(3)}$ is normalised by $W_k$, $|H:M^{(3)}|=|A|^3$, $M^{(3)}\cap A^{\{0,1,3\}}=1$, and $M^{(3)}$ contains $d^{\omega,\omega''}$ whenever $\omega$ and $\omega''$ lie in the same residue class modulo $4$.
\end{lemma}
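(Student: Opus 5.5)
We verify the four assertions in turn, writing $\pi=(\pi_{00},\pi_{01},\pi_{10},\pi_{11})\colon H\to A^4$. This map is a surjective homomorphism, since each $Q_{ab}$ is nonempty (indeed $|Q_{ab}|=2^{k-2}\ge2$ as $k\ge3$).

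\emph{Normality.} Every $w\in W_k$ maps residue classes modulo $4$ onto residue classes modulo $4$, so it permutes $Q_{00},Q_{01},Q_{10},Q_{11}$. Moreover $w$ maps residue classes modulo $2$ onto residue classes modulo $2$. Hence either $w$ fixes both sets $Q_{00}\cup Q_{01}$ (the even numbers, $x_0=0$) and $Q_{10}\cup Q_{11}$, or it interchanges them. Conjugating $d$ by $w$ permutes the entries of $d$, so it permutes the four products $\pi_{ab}(d)$ according to the action of $w$ on the $Q_{ab}$. The defining condition says that $\pi_{00}=\pi_{01}=\lambda$ and $\pi_{10}=\pi_{11}=\lambda^{-1}$ for some $\lambda\in A$. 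It is therefore preserved: a swap within a half changes nothing, and swapping the halves replaces $\lambda$ by $\lambda^{-1}$.

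\emph{Index.} $M^{(3)}$ is the preimage under $\pi$ of the subgroup $\{(\lambda,\lambda,\lambda^{-1},\lambda^{-1}):\lambda\in A\}\cong A$ of $A^4$. Since $\pi$ is surjective, $|H:M^{(3)}|=|A^4|/|A|=|A|^3$.

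\emph{Trivial intersection.} Let $d\in M^{(3)}\cap A^{\{0,1,3\}}$, so $d_\omega=1$ outside $\{0,1,3\}$. Here $0\in Q_{00}$, $1\in Q_{10}$ and $3\in Q_{11}$, and $Q_{01}$ contains none of these points. Thus $\pi_{01}(d)=1$, so $\lambda=1$, and then $\pi_{00}(d)=d_0$, $\pi_{10}(d)=d_1$, $\pi_{11}(d)=d_3$ all equal $1$. Hence $d=1$.

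\emph{Elements $d^{\omega,\omega''}$.} If $\omega$ and $\omega''$ lie in the same $Q_{ab}$, then that product equals $\zeta\zeta^{-1}=1$, and all the other products are empty products, equal to $1$. So $\lambda=1$ works and $d^{\omega,\omega''}\in M^{(3)}$.

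The only point requiring care is the normality argument, specifically checking that swapping the halves sends the pattern $(\lambda,\lambda,\lambda^{-1},\lambda^{-1})$ to $(\lambda^{-1},\lambda^{-1},\lambda,\lambda)$, which is again of the required form.
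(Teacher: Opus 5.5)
Your proof is correct and follows the paper's argument step for step: invariance of the pattern $(\lambda,\lambda,\lambda^{-1},\lambda^{-1})$ under swapping halves gives normality, surjectivity of $\pi$ onto $A^4$ together with the preimage of a copy of $A$ gives the index, and $\pi_{01}(d)=1$ forces $\lambda=1$. One small wording slip: in the last part, the products over the other residue classes are not empty products but products of entries that are all equal to $1$.
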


\begin{proof}
An element of $W_k$ permutes the $Q_{ab}$, mapping the pair $\{Q_{00},Q_{01}\}$ (one half) onto itself or onto $\{Q_{10},Q_{11}\}$; the defining condition, namely that the products over the two quarters of one half are equal to some $\lambda\in A$ and those over the other half equal to $\lambda^{-1}$, is invariant under this. The map $d\mapsto(\pi_{00}(d),\pi_{01}(d),\pi_{10}(d),\pi_{11}(d))$ is onto $A^4$, and $M^{(3)}$ is the preimage of a subgroup isomorphic to $A$, so $|H:M^{(3)}|=|A|^3$. If $d\in M^{(3)}$ is supported on $\{0,1,3\}$, then $\pi_{01}(d)=1$, so $\lambda=1$ and $d_0=\pi_{00}(d)=1$, $d_1=\pi_{10}(d)=1$ and $d_3=\pi_{11}(d)=1$. The last assertion is clear.
\end{proof}

\begin{proposition}\label{prop:T3}
Let $k\in\{n+2,n+3\}$ and $T=M^{(3)}\rtimes\cA_k$. Then $|T|=2^{x_k}$, $T$ is irreducible, and $T$ has exactly one regular orbit, namely $\cO^+_{n+2}$ if $k=n+2$ and $\cO^-_{n+3}$ if $k=n+3$.
\end{proposition}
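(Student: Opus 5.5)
The plan is to verify the order, then the regular-orbit condition through Lemma~\ref{lem:criterion} and the factorisation of $C_{P(n,k)}(v_X)$, and finally irreducibility through Lemma~\ref{lem:irr}.

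\emph{Order.} By Lemma~\ref{lem:M3}, $M^{(3)}$ is normal in $P(n,k)$. Since $\cA_k\le W_k$ meets $H$ trivially, $T=M^{(3)}\rtimes\cA_k$ is a subgroup of $P(n,k)$. Using Lemmas~\ref{lem:M3} and~\ref{lem:affine},
\[|T|=|H|\,2^{-3n}\,2^{3k-4}=2^{n2^k+3k-3n-4}.\]
Theorem~\ref{thm:largest} gives $x_k=n2^k+2^k-1-a_k$. For $k=n+2$ we have $a_k=2^{n+2}-3$, so $x_k=n2^k+2$; for $k=n+3$ we have $a_k=2^{n+4}-6$, so $x_k=n2^k+5$. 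In both cases this equals $n2^k+3k-3n-4$, so $|T|=2^{x_k}$.

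\emph{Stabilisers.} For $X\subseteq\Omega_k$, Lemma~\ref{lem:orbits}(b) gives $C_P(v_X)=A^X\rtimes\Stab_{W_k}(X)$. An element $da$ of $T$ with $d\in M^{(3)}$ and $a\in\cA_k$ fixes $v_X$ if and only if $a\in\Stab_{\cA_k}(X)$ and $d\in M^{(3)}\cap A^X$. Hence
\[T\cap C_P(v_X)=1\iff \Stab_{\cA_k}(X)=1\ \text{and}\ M^{(3)}\cap A^X=1.\]
By Lemma~\ref{lem:criterion}, it then suffices to run through the largest orbits $\cO^\delta$, that is, $X=Y^\delta_{k-n}$.

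\emph{Case $k=n+2$.} The sets are $Y_2^-=\{0\}$, $Y_2^0=\{0,1\}$ and $Y_2^+=\{0,1,3\}$. Lemma~\ref{lem:affine} shows that $\cA_k$ stabilises the first two sets nontrivially, so their orbits are not regular. For $\{0,1,3\}$ both conditions hold, by Lemmas~\ref{lem:affine} and~\ref{lem:M3}. Thus $\cO^+_{n+2}$ is the unique regular orbit.

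\emph{Case $k=n+3$.} Unwinding Definition~\ref{def:Y} with $(Y,Y')=\{2y\}\cup\{2y'+1\}$ gives:
\begin{itemize}
\item $Y_3^-=(\{0\},\{0,1\})=\{0,1,3\}$, so the same argument shows that $\cO^-_{n+3}$ is regular;
\item $Y_3^0=(\{0\},\{0,1,3\})=\{0,1,3,7\}$, of size $4$;
\item $Y_3^+=(\{0,1\},\{0,1,3\})$, of size $5$.
\end{itemize}
For the last two sets, Lemma~\ref{lem:count} applies with $M=M^{(3)}$: we have $|M^{(3)}|\,|A|^{|X|}=|H|\,|A|^{|X|-3}>|H|$ since $|X|\ge4$. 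Hence these orbits are not regular. The point to double-check here is that the binary encoding of $(Y,Y')$ really yields exactly $\{0,1,3\}$ for $Y_3^-$, which the whole case depends on.

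\emph{Irreducibility.} I would apply Lemma~\ref{lem:irr}; this step looks routine. Condition (i) holds because $\cA_k\supseteq E_k$ is transitive. For (ii) and (iii), take $\omega\ne\omega'$. The residue class of $\omega$ modulo $4$ has $2^{k-2}\ge2^n\ge4$ elements, so we can choose $\omega''\notin\{\omega,\omega'\}$ in the same class. Then $d^{\omega,\omega''}\in M^{(3)}$ by Lemma~\ref{lem:M3}. It has entry $\zeta$ at $\omega$, which generates $\K$ over $\F_p$, and entry $1$ at $\omega'$.
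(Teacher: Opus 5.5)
Your proposal is correct and follows the paper's proof essentially step for step. It uses the same order count and the same factorisation of $T\cap C(v_X)$ for $X=\{0,1,3\}$ via Lemmas~\ref{lem:affine} and~\ref{lem:M3}. It uses the nontrivial stabilisers in $\cA_k$ of $\{0\}$ and $\{0,1\}$ when $k=n+2$, Lemma~\ref{lem:count} for the zero sets of sizes $4$ and $5$ when $k=n+3$, and Lemma~\ref{lem:irr} with $d^{\omega,\omega''}$ taken in the residue class of $\omega$ modulo $4$.

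One small slip: for $k=n+3$ the correct value is $a_k=2(2^{n+2}-3)=2^{n+3}-6$, not $2^{n+4}-6$. Your resulting value $x_k=n2^k+5$ is nonetheless correct.
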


\begin{proof}
By Lemmas~\ref{lem:affine} and~\ref{lem:M3}, $|T|=|H|\,|A|^{-3}\,2^{3k-4}=2^{n2^k-3n+3k-4}$, which equals $2^{x_k}$ for $k\in\{n+2,n+3\}$ by Theorem~\ref{thm:largest}. Let $X=\{0,1,3\}$; this is $Y_2^+$ if $k=n+2$ and $Y_3^-$ if $k=n+3$. An element $da$ of $T$ ($d\in M^{(3)}$, $a\in\cA_k$) lies in $C(v_X)=A^X\rtimes\Stab_{W_k}(X)$ if and only if $a\in\Stab_{\cA_k}(X)=1$ and $d\in M^{(3)}\cap A^X=1$. So the orbit of $v_X$ is a regular orbit of $T$.

If $k=n+2$, the other largest orbits are those of $v_{\{0\}}$ and $v_{\{0,1\}}$, and they are not regular, because the permutation matrices in $\cA_k$ fixing $\{0\}$ or $\{0,1\}$ (Lemma~\ref{lem:affine}) centralise these vectors. If $k=n+3$, the other two largest orbits have zero sets with $4$ and $5$ elements, and $|M^{(3)}|\,|A|^4>|H|$, so they are not regular by Lemma~\ref{lem:count}.

Finally, $\cA_k$ is transitive, and for $\omega\ne\omega'$ we may choose $\omega''\notin\{\omega,\omega'\}$ in the residue class of $\omega$ modulo $4$, which has $2^{k-2}\ge4$ elements. Then $d^{\omega,\omega''}\in M^{(3)}$, and $T$ is irreducible by Lemma~\ref{lem:irr}.
\end{proof}

\begin{proof}[Proof of Theorem~\ref{thmE}]
As in the proof of Theorem~\ref{thmC} we may work in $P(n,k)$ with $n\ge2$. (a) Let $k=n+2$. The orbit $\cO^-=\cO_1$ is the unique regular orbit of the irreducible group $\Gamma_{n+2}$ (Proposition~\ref{prop:mid}), and $\cO^+$ that of the irreducible group of Proposition~\ref{prop:T3}. By Proposition~\ref{prop:reduction} and Lemma~\ref{lem:level2}(c), $\cO^0$ is the unique regular orbit of $T(B_2)$, which has order $2^{x_k}$ and is reducible. (b) Let $k=n+3$. The orbit $\cO^-$ is the unique regular orbit of the irreducible group of Proposition~\ref{prop:T3}, and $\cO^0$ that of the irreducible group $T(R_3)$ (Proposition~\ref{prop:reduction} and Lemma~\ref{lem:R3}).
\end{proof}

\begin{remark}
A computer calculation with a related construction, which we do not describe here, indicates that $\cO^+_{n+3}$ is also the unique regular orbit of an irreducible subgroup of order $2^{x_{n+3}}$, at least for $n\le4$. We have not pursued this further.
\end{remark}

\section{Small cases: proof of Theorem~\ref{thmD}}\label{sec:small}

\begin{proposition}\label{prop:m2}
Let $q=2^n+1\ge5$ be a Fermat prime and $P_2=H\rtimes\langle\sigma\rangle$, where $H$ is the group of diagonal matrices in $\GL_2(q)$ and $\sigma$ is the transposition matrix. Let $N=\{\diag(a,b):ab\in(\F_q^\times)^2\}$ and $M_1=N\langle\diag(\zeta,1)\sigma\rangle$. Then the subgroups of $P_2$ of order $(q-1)^2$ with exactly one regular orbit on $V_2$ are $H$ and $M_1$, and only $M_1$ is irreducible.
\end{proposition}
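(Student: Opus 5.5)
The plan is to reduce everything, via Lemma~\ref{lem:criterion}, to the question of which subgroups of order $(q-1)^2$ avoid the $H$-conjugates of $\sigma$, and then to enumerate such subgroups directly. Here $k=1<n$ because $n\ge2$. So by Theorem~\ref{thm:largest} the only largest orbit of $P_2=P(n,1)$ is $\cO_\emptyset$, of size $|H|=(q-1)^2=2^{2n}$.

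\emph{Step 1: the criterion.} For $T\le P_2$ of order $2^{2n}$, Lemma~\ref{lem:criterion} says that every regular orbit of $T$ is a largest orbit of $P_2$. Since $\cO_\emptyset$ is the only largest orbit, ``exactly one regular orbit'' is the same as ``$\cO_\emptyset$ is regular'', i.e.\ $T\cap C_{P_2}(v)=1$ for every $v\in\cO_\emptyset$.

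By Lemma~\ref{lem:orbits}(b), $C_{P_2}(v_\emptyset)=W_1=\langle\sigma\rangle$. The stabilisers of the other points of $\cO_\emptyset$ are the $H$-conjugates of $\langle\sigma\rangle$. A direct computation gives
\[
\diag(a,b)^{-1}\sigma\,\diag(a,b)=\diag(a^{-1}b,\,ab^{-1})\sigma .
\]
Hence these conjugates of $\sigma$ are exactly the elements $\diag(x,x^{-1})\sigma$ with $x\in\F_q^\times$. Note that $\diag(x,y)\sigma$ squares to the scalar $xy$, so these are precisely the involutions in $P_2\setminus H$.

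Conclusion of Step 1: $T$ has exactly one regular orbit if and only if $T$ contains no element $\diag(x,x^{-1})\sigma$, that is, no element $d\sigma$ with $\det d=1$.

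\emph{Step 2: enumerate the subgroups.} If $T\le H$, then $T=H$, which trivially satisfies the condition. Otherwise $T\cap H$ has index $2$ in $H\cong C_{2^n}^2$. It is therefore the kernel of a nonzero homomorphism
\[
\diag(\zeta^i,\zeta^j)\mapsto \alpha i+\beta j \pmod 2,\qquad (\alpha,\beta)\in\F_2^2\setminus\{0\}.
\]
Since $T\cap H$ is normal in $T$ and $T$ contains an element of $H\sigma$, the subgroup $T\cap H$ is $\sigma$-invariant. Conjugation by $\sigma$ swaps $\alpha$ and $\beta$, so this forces $\alpha=\beta=1$, i.e.\ $T\cap H=N$.

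Conversely, for every $d\in H$ the element $d\sigma$ normalises $N$, and $(d\sigma)^2=\det(d)\cdot 1$ has square determinant, so it lies in $N$. Hence $N\langle d\sigma\rangle$ has order $2|N|=2^{2n}$, and it depends only on the coset $dN$. Since $|H:N|=2$, there are exactly two such groups: $N\langle\sigma\rangle$ and $M_1$ (the latter because $\det\diag(\zeta,1)=\zeta$ is a non-square).

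The group $N\langle\sigma\rangle$ contains $\sigma$ itself, so it is excluded by Step 1. In $M_1$, every element outside $N$ lies in $N\diag(\zeta,1)\sigma$, so its diagonal part has non-square determinant. In particular $\det\ne1$, so $M_1$ passes the criterion of Step 1. This yields exactly $H$ and $M_1$.

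\emph{Step 3: irreducibility.} $H$ fixes both coordinate lines, so it is reducible. This also follows from Lemma~\ref{lem:irr}, since its permutation image is trivial. For $M_1$ we apply Lemma~\ref{lem:irr}. Condition (i) holds because the image of $M_1$ in $W_1$ contains $\sigma$. Condition (ii) is automatic since $\K=\F_q$. For condition (iii), the elements $\diag(\zeta^2,1)$ and $\diag(1,\zeta^2)$ lie in $N$, and $\zeta^2\ne1$ because $n\ge2$.

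The main point needing care is Step 1: identifying the point stabilisers on $\cO_\emptyset$ with the elements $\diag(x,x^{-1})\sigma$. The rest is bookkeeping of index-$2$ subgroups of $H$.
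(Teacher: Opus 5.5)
Your proof is correct and follows essentially the paper's route. You reduce via Lemma~\ref{lem:criterion} to excluding $\sigma$, identify the three candidate subgroups $H$, $N\langle\sigma\rangle$, $M_1$, and use Lemma~\ref{lem:irr} with $\diag(\zeta^2,1),\diag(1,\zeta^2)\in N$ for irreducibility. The paper obtains the three index-$2$ subgroups from Burnside's basis theorem ($N=\Phi(P_2)$) instead of your direct analysis of $T\cap H$. Also, your computation of all conjugates $\diag(x,x^{-1})\sigma$ in Step 1 is unnecessary: Lemma~\ref{lem:criterion} lets you test a single vector, so the condition is simply $\sigma\notin T$.
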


\begin{proof}
Here $k=1<n$, so by Theorem~\ref{thm:largest} the only largest orbit consists of the vectors $(a,b)$ with $ab\ne0$, and $C_{P_2}((1,1))=\langle\sigma\rangle$. The subgroups of order $(q-1)^2$ have index $2$, so they are normal, and by Lemma~\ref{lem:criterion} such a subgroup has exactly one regular orbit if and only if it does not contain $\sigma$. The subgroup $N$ is normal of index $4$, and $P_2/N$ is elementary abelian, generated by the images of $\diag(\zeta,1)$ and $\sigma$. As $P_2=\langle\diag(\zeta,1),\sigma\rangle$, Burnside's basis theorem gives $N=\Phi(P_2)$, so the subgroups of index $2$ are $H$, $N\langle\sigma\rangle$ and $M_1$. The elements of $M_1\setminus N$ are the $\diag(a,b)\sigma$ with $ab\notin(\F_q^\times)^2$, so $\sigma\notin M_1$. Thus $H$ and $M_1$ are the subgroups in question. $H$ is reducible, and $M_1$, which is the group of Proposition~\ref{prop:low} for $k=1$, is irreducible.
\end{proof}

\begin{proposition}\label{prop:m2q3}
Let $q=3$. The subgroups of $P_2=\GaL_1(9)\cong\SD_{16}$ of order $8$ with exactly one regular orbit on $V_2=\F_9$ are $\langle\zeta\rangle\cong C_8$ and $\langle\zeta^2,\zeta\varphi\rangle\cong Q_8$, where $\varphi\colon x\mapsto x^3$; both are irreducible.
\end{proposition}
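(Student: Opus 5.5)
We will argue exactly as in Proposition~\ref{prop:m2}, now with $q=3$, $m=2$, level $k=0$ and $\K=\F_9$, $n=3$. Here $P_2=\GaL_1(9)=\langle\zeta,\varphi\rangle$, where $\zeta$ acts as multiplication by a generator of $\F_9^\times$ and $\varphi$ is the Frobenius. It has order $16$, and the relation $\varphi^{-1}\zeta\varphi=\zeta^3$ identifies it with $\SD_{16}$. By Theorem~\ref{thm:largest} with $k=0<n$, the unique largest orbit of $P(3,0)$ is $\cO_\emptyset=\F_9^\times$. By Lemma~\ref{lem:sylow}(b) this is also the unique largest orbit of $P_2$, and it has size $8$. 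Hence $x_2=3$, and the subgroups in question are the subgroups $T$ of index $2$.

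By Lemma~\ref{lem:criterion}, such a $T$ has exactly one regular orbit if and only if $T\cap C_{P_2}(1)=1$. The stabiliser of $1\in\F_9$ is $\{x\mapsto \lambda x^{3^e}:\lambda=1\}=\langle\varphi\rangle$, which has order $2$. So the condition becomes $\varphi\notin T$.

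Next we list the index-$2$ subgroups. Since $P_2=\langle\zeta,\varphi\rangle$, the Frattini quotient is $C_2^2$, generated by the images of $\zeta$ and $\varphi$. Thus $\Phi(P_2)=\langle\zeta^2\rangle$, and the three maximal subgroups are
\[
\langle\zeta\rangle\cong C_8,\qquad \langle\zeta^2,\varphi\rangle,\qquad \langle\zeta^2,\zeta\varphi\rangle.
\]
The middle one contains $\varphi$ and is discarded; it is in fact $D_8$, since $\varphi$ inverts $\zeta^2$ because $3\cdot 2\equiv -2 \pmod 8$.

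For the third subgroup we must check that $\varphi$ is not in it and identify its isomorphism type. Its elements outside $\langle\zeta^2\rangle$ are of the form $\zeta^{\text{odd}}\varphi$, so $\varphi$ is excluded. Computing $(\zeta^i\varphi)^2$, the map $x\mapsto \zeta^i(\zeta^ix^3)^3$ is multiplication by $\zeta^{4i}=\zeta^4=-1$ for odd $i$. Hence all four such elements have order $4$ and square to $-1$, and the subgroup is $Q_8$.

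Finally, irreducibility over $\F_3$ follows because both groups contain $\zeta^2$. This element acts on $\F_9$ as multiplication by an element of order $4$, which does not lie in $\F_3$, so it has no eigenvector over $\F_3$. Consequently $\F_9$ has no $1$-dimensional $\F_3$-subspace invariant under these groups, and both $C_8$ and $Q_8$ are irreducible. The only step needing care is the identification of the maximal subgroups and the squaring computation; everything else is immediate from the earlier lemmas.
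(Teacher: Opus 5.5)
Your proof is correct and takes essentially the same route as the paper. You identify $\F_9^\times$ as the unique largest orbit, list the three maximal subgroups of $P_2\cong\SD_{16}$, discard the one containing the point stabiliser $\langle\varphi\rangle$, recognise $Q_8$ from $(\zeta\varphi)^2=\zeta^4=-1$, and deduce irreducibility from $\zeta^2\notin\F_3$. Your Frattini argument and your appeal to Lemma~\ref{lem:criterion} (whose proof applies verbatim to $\tilde P(3,0)$) only make explicit what the paper leaves implicit.
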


\begin{proof}
The largest orbit is $\F_9^\times$, of size $8$. The subgroups of order $8$ are the maximal subgroups $\langle\zeta\rangle$, $\langle\zeta^2,\zeta\varphi\rangle$ and $\langle\zeta^2,\varphi\rangle$. An element $x\mapsto\zeta^ix^{3^\varepsilon}$ fixes $1$ if and only if $\zeta^i=1$; so exactly the third subgroup, which contains $\varphi$, has no regular orbit. Since $(\zeta\varphi)^2=\zeta^4=-1$, all elements outside $\langle\zeta^2\rangle$ in the second subgroup have order $4$, so it is $Q_8$. Both groups contain $\zeta^2$, which acts on $\F_9$ without eigenvalues in $\F_3$, so they are irreducible.
\end{proof}

\begin{table}[ht]
\centering
\small
\renewcommand{\arraystretch}{1.15}
\begin{tabular}{|c|c|c|c|c|p{6.3cm}|}
\hline
$q$ & $m$ & orders & classes & irr. & irreducible examples\\
\hline
$3$ & $2$ & all & $3$ & $2$ & $C_8$, $Q_8$\\
$5$ & $2$ & all & $3$ & $2$ & $D_8$; $M_1\cong\langle16,6\rangle$\\
$17$ & $2$ & all & $5$ & $4$ & $D_{32}$; $\langle128,67\rangle$, $\langle128,902\rangle$; $M_1\cong\langle256,447\rangle$\\
$3$ & $3$ & all & $34$ & $10$ & $D_{16}$; $\langle32,i\rangle$, $i\in\{6,7,11,38,44,49\}$; $\langle64,i\rangle$, $i\in\{37,45,46\}$\\
$5$ & $3$ & $2^8$ & $30$ & $16$ & $14$ with regular orbit $\cO_1$, and $2$ with regular orbit $\cO_\emptyset$\\
\hline
\end{tabular}
\medskip
\caption{Conjugacy classes in $\GL(V_m)$ of $2$-subgroups with exactly one regular orbit on $V_m$ (for $(q,m)=(5,3)$ only those of order $2^{x_3}=2^8$). Here $\langle a,b\rangle$ denotes the group with identification number $(a,b)$ in the SmallGroups library.}
\label{tab:small}
\end{table}

\begin{theorem}\label{thm:computer}
The statements of Theorem~\ref{thmD}(b) and Table~\ref{tab:small} hold.
\end{theorem}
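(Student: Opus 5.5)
This is a computational statement, so the plan is to reduce it to finite, checkable enumerations in GAP and to use the theory above to shrink the search. By Theorem~\ref{thmA}(c) and Sylow's theorem, every $2$-subgroup of $\GL(V_m)$ with exactly one regular orbit is conjugate to a subgroup of $P_m$. It therefore suffices to do three things: enumerate the conjugacy classes of subgroups of $P_m$ (for $(q,m)=(5,3)$, only those of order $2^{x_3}=2^8$); test each class for the number of regular orbits; and then fuse the survivors under $\GL(V_m)$-conjugacy. The relevant sizes are small: $|P_2|=2(q-1)^2$ equals $16$, $32$ and $512$ for $q=3,5,17$ (with $P_2=\GaL_1(9)$ of order $16$ when $q=3$); $|P_3|=2^7$ for $q=3$; and $|P_3|=2^{11}$ for $q=5$. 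Hence \texttt{ConjugacyClassesSubgroups} or \texttt{SubgroupsSolvableGroup} on $P_m$ is feasible in every case.

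For the regular-orbit test I would not compute orbits on all of $V_m$. By Lemma~\ref{lem:orbits}, the $P_m$-orbits are indexed by $W_k$-classes of zero sets $X$, with representatives $v_X$. A subgroup $T$ has a regular orbit inside $v_X^{P_m}$ exactly when some $P_m$-conjugate of $T$ meets $C_{P_m}(v_X)$ trivially. The number of regular $T$-orbits contained in $v_X^{P_m}$ equals
\[
\frac{\#\{g \in P_m : T^g\cap C_{P_m}(v_X)=1\}\cdot|v_X^{P_m}|}{|P_m|\,|T|}.
\]
This count is computable from double cosets $T\backslash P_m/C_{P_m}(v_X)$. Summing over the few classes of $X$ gives the total number of regular orbits of $T$, and we keep those $T$ where the total is $1$. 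For order $2^{x_m}$, Lemma~\ref{lem:criterion} shortens this further: only the largest orbits can contribute, and the test is just $T\cap C_{P_m}(v)=1$ for $v$ in the finitely many largest orbits of Theorem~\ref{thm:largest}. As a sanity check on the code, part~(a) of Theorem~\ref{thmD} (Propositions~\ref{prop:m2} and~\ref{prop:m2q3}) must be reproduced.

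Irreducibility is checked with the MeatAxe (\texttt{MTX.IsIrreducible}) on the $\F_q$-module; for $q=3$ this means the module of dimension $2^{m-1}$ over $\F_3$, not the one over $\F_9$. The structure column of Table~\ref{tab:small} comes from \texttt{IdGroup}. In the $(5,3)$ case, each surviving subgroup is additionally labelled by which largest orbit, $\cO_\emptyset$ or $\cO_1$, is its regular one. This should confirm the $18{+}18$ split among the $36$ classes of subgroups of $P_3$.

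The main obstacle is passing from $P_m$-classes to $\GL(V_m)$-classes, which is needed for the counts $34$ and $30$. Two subgroups of $P_m$ may be $\GL$-conjugate without being $P_m$-conjugate. By a Sylow-type argument such a fusion is realised by an element of $N_{\GL}(P_m)$ only when the subgroups are Sylow in their own normalisers, and that does not hold in general. I would first try testing $\GL(V_m)$-conjugacy directly with \texttt{IsConjugate} inside $\GL_{2^{m-1}}(q)$; these groups are $\GL_2(5)$, $\GL_2(17)$, $\GL_4(3)$ and $\GL_4(5)$, all small enough for this. If that is too slow, I would separate the classes by cheap invariants first (isomorphism type, character of the natural module, orbit lengths) and call \texttt{IsConjugate} only within groups that share all invariants. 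This reduces $36$ to $30$ classes for $(5,3)$, and in the other cases it gives the stated $3,3,5,34$ classes with $2,2,4,10$ irreducible.
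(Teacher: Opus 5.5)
Your plan is essentially the paper's own proof. Both enumerate the conjugacy classes of subgroups of $P_m$ in GAP (only those of order $2^8$ for $(5,3)$), count regular orbits, and test irreducibility over $\F_q$ with the MeatAxe. Both then fuse under $\GL(V_m)$ with \texttt{IsConjugate} after sorting by invariants such as \texttt{IdGroup} and orbit lengths. Your stabiliser-intersection/double-coset count of regular orbits is only a cosmetic variant of the paper's direct count of orbits of size $|T|$ on the nonzero vectors.

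One correction matters if taken literally. For $(q,m)=(3,3)$, the group $P_3=\GaL_1(9)\wr C_2$ has order $2^9$, which is the full $2$-part of $|\GL_4(3)|$, not $2^7$. The order $2^7$ belongs to $\GL_1(9)\wr C_2$, which is not a Sylow $2$-subgroup of $\GL_4(3)$, and enumerating only its subgroups would undercount. For example, $Q_8\times Q_8\le\GaL_1(9)\times\GaL_1(9)$ acting on $\F_9\oplus\F_9$ has exactly one regular orbit, namely the $64$ vectors with both coordinates nonzero. It nevertheless has no abelian subgroup of order $32$. So it cannot lie in $(C_8\times C_8)\rtimes C_2$, where a subgroup of order $64$ would meet $C_8\times C_8$ in an abelian subgroup of order at least $32$.

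Your aside on Burnside-type fusion is also misstated: a subgroup of $P_m$ that is Sylow in its own normaliser must equal $P_m$. This is harmless, since you end up testing $\GL(V_m)$-conjugacy directly, exactly as the paper does.
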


\begin{proof}[Proof (computer-assisted)]
For each case we realised $\GL(V_m)$ as a permutation group on the nonzero vectors of $V_m$ and $P_m$ as the image of the monomial group of Section~\ref{sec:prelim} (for $q=3$, a Sylow $2$-subgroup of $\GL(V_m)$). We computed representatives of the conjugacy classes of subgroups of $P_m$ with the GAP function \texttt{ConjugacyClassesSubgroups}. For $(q,m)=(5,3)$ we computed only the classes of subgroups of order $2^8$, with the function \texttt{SubgroupsSolvableGroup} and the option \texttt{ExactSizeConsiderFunction}. For each subgroup we counted the orbits of size $|T|$ on the nonzero vectors, and we tested irreducibility with the MeatAxe. Finally, we fused the classes under conjugation in $\GL(V_m)$ with \texttt{IsConjugate}, after sorting them by their \texttt{IdGroup} and the multiset of their orbit lengths. Since every $2$-subgroup of $\GL(V_m)$ is conjugate into $P_m$, this gives all conjugacy classes. The computations were done with GAP~4.12.1 \cite{GAP}.
\end{proof}

For $(q,m)=(5,3)$ the computation shows more precisely that $18$ of the $36$ classes of subgroups of $P_3$ have $\cO_\emptyset$ and $18$ have $\cO_1$ as their unique regular orbit. All groups with regular orbit $\cO_1$ are irreducible, whereas only $2$ of the $18$ classes with regular orbit $\cO_\emptyset$ are.

\begin{remark}\label{rem:nonunique}
A subgroup of order $2^{x_m}$ with exactly one regular orbit is in general not determined by this orbit. For instance, let $k=n$ and $q\ge5$. Then both $\Gamma_n=M_n\rtimes\langle g_n\rangle$ (Proposition~\ref{prop:mid}) and $M_n\rtimes E_n$ are irreducible of order $2^{x_m}$ with unique regular orbit $\cO_1$; for the second group this follows as in the proof of Proposition~\ref{prop:mid}, since $E_n$ is regular on $\Omega_n$ and $E_n\le C(v_\emptyset)$. As their images in $W_n$ are cyclic and elementary abelian of order $2^n\ge4$, respectively, they are not conjugate in $P_m$.
\end{remark}

\begin{proof}[Proof of Theorem~\ref{thmD}]
Part~(a) is Propositions~\ref{prop:m2} and~\ref{prop:m2q3}, and part~(b) is Theorem~\ref{thm:computer}.
\end{proof}

\section{Examples below the maximal order}\label{sec:below}

\begin{example}\label{ex:wreath}
Let $q\ge5$ be a Fermat prime, let $S=(\F_q^\times)^2$, of index $2$ in $\F_q^\times$, and let $G=S\wr C_2\le\GL_2(q)$, generated by the diagonal matrices with entries in $S$ and by $\sigma$. Then $G$ is irreducible, and its unique regular orbit is $\{(a,b):ab\notin S\cup\{0\}\}$. Indeed, a vector with a zero coordinate is fixed by a nontrivial diagonal element, and $(a,b)$ with $ab\ne0$ is fixed by an element $\diag(s_1,s_2)\sigma$ if and only if $b=as_1$ and $a=bs_2$, which is possible if and only if $b/a\in S$. The $2((q-1)/2)^2=|G|$ vectors $(a,b)$ with $b/a\notin S$ form one orbit. The primitive constituent of the induced module $V$ is $S$ acting on $\F_q$, which has two regular orbits.
\end{example}

\begin{proposition}\label{prop:dihedral}
Let $D=D_{2^a}$ with $a\ge3$, let $q$ be an odd prime and $V$ a faithful irreducible $\F_qD$-module of dimension $d$, and put $Q=q^{d/2}$. Then $D$ has exactly $(Q-1)(Q+1-2^{a-1})/2^a$ regular orbits on $V$. This number is $1$ if and only if $Q=2^{a-1}+1$.
\end{proposition}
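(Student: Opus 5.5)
We first pin down the module. Write $D=\langle c,t\rangle$ with $c$ of order $2^{a-1}$, $t^2=1$ and $c^t=c^{-1}$. Since $V$ is faithful and irreducible and $q$ is odd, Clifford's theorem applied to $\langle c\rangle$ shows that $V|_{\langle c\rangle}$ is a sum of Galois-conjugate irreducibles on which $c$ acts faithfully. Two cases arise.

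\emph{Case 1: $\lambda^{-1}$ is not a $q$-power conjugate of $\lambda$,} where $\lambda$ is an eigenvalue of $c$ of order $2^{a-1}$. Then $V=\Ind_{\langle c\rangle}^D(U)$ with $U\cong\F_q(\lambda)$.

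\emph{Case 2: $\lambda^{-1}$ is such a conjugate.} Then $V|_{\langle c\rangle}$ is irreducible.

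In both cases we identify $V$ with $L=\F_{Q^2}$ or with $\F_Q\oplus\F_Q$. The cleanest uniform model is as follows. Let $L=\F_q(\lambda)$, so $c$ acts on $L$ by multiplication by $\lambda$ and $t$ acts $\F_q$-linearly with $t\lambda t=\lambda^{-1}$. Since $\dim V=d$ and $Q=q^{d/2}$, $V$ carries a structure of a $2$-dimensional space over $F_0=\F_Q$ on which $c$ acts as an element of order $2^{a-1}$ dividing $Q\pm1$. In Case 2, $d=2|\F_q(\lambda+\lambda^{-1}):\F_q|$, $V=\F_{Q^2}$, $c=\lambda$ with $\lambda^{Q+1}=1$, and $t$ is the Frobenius $x\mapsto x^Q$. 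In Case 1, $V=\F_Q^2$ with $c=\diag(\lambda,\lambda^{-1})$ and $\lambda\in\F_Q$, so $2^{a-1}\mid Q-1$, and $t$ swaps the coordinates. We check that $d$ is even in both cases, which holds because $\langle c\rangle$ of order at least $4$ has no faithful action with eigenvalues closed under inversion in dimension $1$.

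Next we count fixed points. Only nontrivial elements can fix vectors. The nonidentity powers $c^i$ fix only $0$, since none has eigenvalue $1$ on a faithful homogeneous module. Each reflection $c^it$ is an involution whose fixed space has dimension $d/2$ over $\F_q$: in both models $c^it$ is conjugate (within $\GL(V)$) to $x\mapsto\mu x^Q$ or to $(x,y)\mapsto(\mu y,\mu^{-1}x)$, each with a fixed space of $F_0$-dimension $1$. So each of the $2^{a-1}$ reflections fixes exactly $Q-1$ nonzero vectors.

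We then show that these fixed sets are pairwise disjoint. A nonzero vector fixed by two distinct reflections is fixed by their product $c^j\ne1$, which is impossible. Hence the number of nonzero vectors with trivial stabiliser is $Q^2-1-2^{a-1}(Q-1)=(Q-1)(Q+1-2^{a-1})$. Dividing by $|D|=2^a$ gives the stated number of regular orbits.

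Finally, the number of regular orbits equals $1$ if and only if $(Q-1)(Q+1-2^{a-1})=2^a$. We need $2^{a-1}\mid Q\pm1$, so $Q\ge2^{a-1}-1$; in fact $Q+1-2^{a-1}>0$ is forced for positivity. Setting $u=Q+1-2^{a-1}$, the equation reads $(2^{a-1}+u-2)u=2^a$. For $u=2$ this holds exactly, which gives $Q=2^{a-1}+1$. For $u\ge3$ the left side exceeds $2^a$. For $u=1$ the left side is $2^{a-1}-1\neq2^a$.

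The main obstacle is the uniform fixed-point count $Q-1$ for each reflection across both cases, which requires setting up the $\F_Q$-structure carefully. I would handle it by noting that $t$ inverts $c$, so its $-1$ and $+1$ eigenspaces are swapped by $c$ and therefore have equal dimension $d/2$.
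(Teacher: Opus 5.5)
Your main argument is correct, but it takes a different route from the paper. You first classify $V$ by Clifford theory over $\langle c\rangle$ into two cases. In the induced case, $V\cong\F_Q^2$, $c=\mathrm{diag}(\lambda,\lambda^{-1})$ and $2^{a-1}\mid Q-1$. In the other case, $V|_{\langle c\rangle}$ is irreducible, $V\cong\F_{Q^2}$, $t$ acts as $x\mapsto x^Q$ after a change of basis, and $2^{a-1}\mid Q+1$. You then read off the fixed spaces of the reflections in these models.

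The paper needs no classification. By Schur's lemma the central involution $z$ acts as $-1$, so every nontrivial element of $\langle c\rangle$, having $z$ as a power, is fixed-point-free. Every reflection $t$ is $D$-conjugate to $tz$ (this is where $a\ge3$ enters), and $tz$ acts as $-t$. Hence $\dim C_V(t)=\dim C_V(-t)=d/2$, which also shows for free that $d$ is even.

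Your models give extra structural information, but they cost several steps you only assert:
\begin{enumerate}[(i)]
\item that $V|_{\langle c\rangle}$ is irreducible in Case~2, which needs an extension or Galois-descent argument;
\item that $x\mapsto\mu x^Q$ with $\mu^{Q+1}=1$ fixes exactly an $F_0$-line, which follows from Hilbert~90, or from the fact that it is an $F_0$-linear involution other than $\pm1$.
\end{enumerate}
Also, your reason for $c^i$ being fixed-point-free cites homogeneity, which fails in Case~1. The conclusion still holds there, because both constituents are faithful. Your final arithmetic with $u=Q+1-2^{a-1}$ is fine, and slightly more explicit than the paper's factorisation into powers of $2$.

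Your closing remark, however, is false for $a\ge4$: $c$ does not swap the $\pm1$-eigenspaces of $t$. If $vt=v$, then $(vc)t=vc^{-1}$, and this equals $-vc$ only if $vc^2=-v$, which for $v\ne0$ forces $a=3$. In general $c$ maps $C_V(t)$ onto $C_V(c^{-1}tc)=C_V(c^{-2}t)$, the fixed space of another reflection. The element that does the job is $c^{2^{a-3}}$, which conjugates $t$ to $c^{-2^{a-2}}t=zt$. Combined with $z=-1$, this is exactly the paper's argument, and it would make your case distinction unnecessary.
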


\begin{proof}
Let $C=\langle c\rangle$ be the cyclic subgroup of index $2$ and $z$ its involution. By Schur's lemma, $z$ acts as $-1$, so every nontrivial element of $C$ acts fixed-point-freely on $V\setminus\{0\}$. The $2^{a-1}$ elements of $D\setminus C$ are involutions; each such $t$ is conjugate to $tz$, which acts as $-t$, so $\dim C_V(t)=\dim C_V(tz)=d/2$. A nonzero vector fixed by two of these involutions would be fixed by their product, a nontrivial element of $C$. Hence the nonzero vectors with nontrivial stabiliser form a disjoint union of $2^{a-1}$ sets of size $Q-1$, and there are $Q^2-1-2^{a-1}(Q-1)=(Q-1)(Q+1-2^{a-1})$ vectors in regular orbits. Finally, $(Q-1)(Q+1-2^{a-1})=2^a$ forces $Q-1=2^s$ and $2^s+2-2^{a-1}=2^{a-s}$, so that $s=a-1$.
\end{proof}

For instance, $D_{2(q-1)}=\langle\diag(\zeta,\zeta^{-1}),\sigma\rangle\le\GL_2(q)$ has exactly one regular orbit on $\F_q^2$ for every Fermat prime $q\ge5$; and $D_{16}$, acting on $\F_9^2$ through $\langle\diag(\lambda,\lambda^{-1}),\sigma\rangle$ with $\lambda$ of order $8$ and regarded over $\F_3$, has exactly one regular orbit on $\F_3^4$. Similarly, a generalised quaternion group $Q_{2^a}$ acts semiregularly on $V\setminus\{0\}$ for every faithful irreducible module $V$, because its unique involution acts as $-1$. So it has exactly one regular orbit if and only if $|V|=2^a+1$, that is, for $Q_8$ acting on $\F_3^2$.

\section{Open problems}\label{sec:open}

\begin{question}
Let $k\ge n+4$. Is $\cO^-_k$ or $\cO^+_k$ the unique regular orbit of a subgroup of $P_m$ of order $2^{x_m}$? Our constructions always produce $\cO^0_k$.
\end{question}

\begin{question}
Let $k=n+2$. Is $\cO^0$ the unique regular orbit of an irreducible subgroup of $P_m$ of order $2^{x_m}$? Among the groups $T(R)$ of Proposition~\ref{prop:reduction}, only the reducible group $T(B_2)$ has this property.
\end{question}

\begin{question}
Is every subgroup of $P_m$ of order $2^{x_m}$ with exactly one regular orbit, for $k\ge n+3$, conjugate to a group of the form $T(R)$? If so, the classification of these subgroups reduces to that of subgroups of $W_{k-n}$ acting on subsets of $\Omega_{k-n}$.
\end{question}

\begin{question}
Classify the subgroups of order $2^{x_m}$ with exactly one regular orbit for further small cases, for instance $(q,m)\in\{(3,4),(5,4),(17,3)\}$, including the semilinear subgroups of $P_m$ for $q=3$. Explain the even split between the two largest orbits observed for $(q,m)=(5,3)$, which parallels the case $p=3$ in \cite{KR}.
\end{question}

\begin{question}
Classify all $2$-groups with exactly one regular orbit on a faithful irreducible module, including those whose regular orbit is not of the largest possible size. Example~\ref{ex:wreath} and Proposition~\ref{prop:dihedral} suggest organising this by the primitive constituent.
\end{question}

\section*{Acknowledgement}
The first author was supported by a grant from the Simons Foundation (SFI-MPS-TSM-00014153 to Thomas M. Keller).

\section*{AI statement}
This paper is a natural sequel to \cite{KR} and had been on the authors' ``to do''-list to follow the completion of .\cite{KR}. 
With the advent of AI, its results were now obtained with substantial help from Claude by Anthropic. Before using  this tool,
the authors knew most of Theorem A and had some ideas towards Theorems B and C. . Claude helped with Section 4, proposed the constructions in Sections~\ref{sec:C} and~\ref{sec:E}, carried out the computations reported in Section~\ref{sec:small} together with further checks of the constructions, and drafted the text. The authors have reviewed and edited the content and take full responsibility for the mathematical and narrative integrity of the paper.

\end{document}